\newtheorem{thm}{Theorem}[section]
\newtheorem{cor}[thm]{Corollary}
\newtheorem{lema}[thm]{Lemma}
\newtheorem{prop}[thm]{Proposition}
\theoremstyle{definition}
\newtheorem{defn}[thm]{Definition}
\theoremstyle{remark}
\newtheorem{rem}[thm]{Remark}
\def\supp{\mathop{\text{\normalfont supp}}}
\def\diver{\text{\normalfont div}}
\def\dist{\mathop{\text{\normalfont dist}}}
\numberwithin{equation}{section}
\newcommand{\R}{\mathbb R}
\newcommand{\N}{\mathbb N}
\newcommand{\D}{\displaystyle}
\def\C{\mathbf {C}}
\def\d{\mathbf {d}}
\def\LL{{\mathcal{L}}}
\newcommand{\intr}{\int_{\R^N}}
\newcommand{\ve}{\varepsilon}
\newcommand{\lam}{\lambda}
\begin{document}
\title[Limit of Orlicz Fractional Laplacians]{A H\"older Infinity Laplacian obtained as limit of Orlicz Fractional Laplacians}

\author[J. Fern\'andez Bonder, M. P\'erez-Llanos and A. Salort]{Juli\'an Fern\'andez Bonder, Mayte P\'erez-Llanos and Ariel M. Salort}

\address{Departamento de Matem\'atica, FCEyN - Universidad de Buenos Aires and
\hfill\break \indent IMAS - CONICET
\hfill\break \indent Ciudad Universitaria, Pabell\'on I (1428) Av. Cantilo s/n. \hfill\break \indent Buenos Aires, Argentina.}

\email[J. Fern\'andez Bonder]{jfbonder@dm.uba.ar}
\urladdr{http://mate.dm.uba.ar/~jfbonder}

\email[M. P\'erez-Llanos]{ maytep@dm.uba.ar}
\urladdr{http://mate.dm.uba.ar/~maytep }

\email[A. Salort]{asalort@dm.uba.ar}
\urladdr{http://mate.dm.uba.ar/~asalort}

%46E30  	Spaces of measurable functions ($L^p$-spaces, Orlicz spaces, Kšthe function spaces, Lorentz spaces, rearrangement invariant spaces, ideal spaces, etc.)
%46E35  	Sobolev spaces and other spaces of "smooth'' functions, embedding theorems, trace theorems
%47J30  	Variational methods [See also 58Exx]
%47H30  	Particular nonlinear operators (superposition, Hammerstein, Nemytski?, Uryson, etc.)
%45G05  	Singular nonlinear integral equations
%45G10  	Other nonlinear integral equations
%45K05  	Integro-partial differential equations
%35R09  	Integro-partial differential equations
%35R11  	Fractional partial differential equations
%35J62  	Quasilinear elliptic equations

\subjclass[2010]{35R11, 45G05, 35R09}

\keywords{Fractional order Sobolev spaces, Orlicz-Sobolev spaces, fractional $g-$laplace operator}

\begin{abstract}
This paper concerns with the study of the asymptotic behavior of the solutions to a family of fractional type problems on a bounded domain, satisfying homogeneous Dirichlet boundary conditions. The family of differential operators includes the fractional $p_n$-Laplacian when $p_n\to\infty$ as a particular case, tough it could be extended to a function of the H\"older quotient of order $s$, whose primitive is an Orlicz function satisfying appropriated growth conditions.  The limit equation involves the H\"older infinity Laplacian.
\end{abstract}

\maketitle

\section{Introduction}

Asymptotic behavior of solutions of $p-$Laplacian type equations as $p\to \infty$ and its relation with the well-known $\infty-$Laplacian
\begin{equation}\label{infty-lapla}
-\Delta_\infty u := \left\langle D^2u \frac {Du}{|Du|},\frac{Du}{|Du|}\right\rangle,
\end{equation}
introduced by G. Aronsson \cite{A} when studying the {\it  Lipschitz extension problem}, have been extensively studied, giving rise to a considerable literature devoted to this issue. We suggest the interested reader  the survey paper on {\it Absolutely Minimizing Lipschitz functions} ({\it AMLE}) \cite{ACJ} and references therein.

However, there are indeed  several research directions assembled  behind this analysis. Another motivation comes from the analysis of {\it torsional creep problems}. According to \cite{BDM, K},  torsional creep problems  are related to inhomogeneous problems of the type
\begin{equation}\label{prob-p}
\begin{cases}
-\Delta_p u =f>0 &\text{in }\Omega,\\
u=0 &\text{on }\partial\Omega,
\end{cases}
\end{equation}
where, as usual, $-\Delta_p u := -\diver(|\nabla u|^{p-2}\nabla u)$ is the well-known $p-$laplacian operator.

As remarked in \cite{K}, several facts on elastic-plastic torsion theory suggested that, if we denote by $u_p$ the solution to \eqref{prob-p}, then necessarily, as $p\to\infty$, $u_p\to \dist(\cdot,\partial\Omega)$ in some sense, where $\dist(\cdot,\partial\Omega)$ stands for the distance function to the boundary of $\Omega$ with respect to the Euclidean norm $|\cdot|$, i.e. $\dist(x,\partial\Omega):=\inf_{y\in\partial\Omega}|x-y|$, for each $x\in\Omega$. Indeed, in \cite{K} the author established the uniform convergence of $u_p$ to  $\dist(\cdot,\partial\Omega)$ in $\overline\Omega$ as $p\to\infty$, via variational arguments and maximum principles, while in \cite{BDM} the authors used an approach based on the analysis of the viscosity solutions of the limiting problem of the family of equations \eqref{prob-p}.

 There can be considered more general differential operators on the left handside of \eqref{prob-p}. For example,  the $p-$laplacian operator can be replaced by its (inhomogeneous) variable exponent version, namely $-\Delta_{p(x)}u := -\diver(|\nabla u|^{p(x)-2}\nabla u)$, where the variable exponent $p(x)$ is a continuous function bounded away from 1 and $\infty$ and similar results are obtained. See for instance \cite{MRU,PL,PLR2,PLR}.

Different generalizations of this kind of problems arise when considering differential operators settled in Orlicz-Sobolev spaces, though the literature in this specific direction is much fewer in number, see  \cite{BM,PLR,S}.  Precisely, \cite{BM} is concerned with the asymptotic behavior of the sequence of solutions $u_n$ of
\begin{equation}\label{mm17}
\begin{cases}
-\Delta_{g_n} u := -\diver\left(\frac{g_n(|\nabla u|)}{|\nabla u|}\nabla u\right) = 1 & \text{in }\Omega,\\
u=0 & \text{on }\partial \Omega,
\end{cases}
\end{equation}
 when $g_n$ satisfies the so-called {\em Lieberman condition} (see \cite{Lieberman})
 $$
 p_n^- - 1\leq\frac{tg_n'(t)}{g_n(t)}\leq p_n^+-1 \quad \forall\;t\geq 0,
 $$
with $p_n^+ \leq \beta p_n^-$ for some $\beta>1$ and $p_n^-\to\infty$.

It was established in \cite{BM} that, if $u_n$ is the solution to \eqref{mm17}, then $u_n$ converges uniformly to $\dist(\cdot,\partial\Omega)$ in $\overline\Omega$, as $n\to\infty$.

We notice that it is a somehow expected result, given that problem \eqref{prob-p} is a particular case of \eqref{mm17}, just taking $g_n(t)=|t|^{p_n-2}t$ with $p_n\to\infty$.

Another field related to this kind of asymptotic  approach as $p\to\infty$ in equations of $p-$laplacian type are the so called {\it Tug-of-war-games}, see for example \cite{PSSW}. It is a two-person, zero-sum game, in which  a token is placed at some point $x_0\in\Omega$ and in each turn, the corresponding player moves the token to some position in a ball of radius $\varepsilon$, according to the result of a coin flip and the directions previously chosen by the players. The game finishes when one of the players gets out of the domain or achieves the boundary, thus receiving the pay-off value given by a function $g$ defined on the boundary $\partial\Omega$. In the limit as $\varepsilon\to0$ the deterministic model is governed by the infinity Laplacian operator \eqref{infty-lapla}.

In \cite{BCF}, the coin flip is replaced by a symmetric $s$-stable stochastic Levy process with $s\in(1/2,1)$, thus leading to a nonlocal integro-differential equation governed by an infinity fractional Laplacian when $\varepsilon\to0$. Choosing appropriately a parameter in this operator leads to the so called {\it H\"older infinity laplacian},
\begin{equation}\label{Linfty}
\mathcal L_s u (x): =\mathcal L_s^+ u (x)+\mathcal L_s^- u (x):= \sup_{y\in\R^N}\frac{u(x)-u(y)}{|x-y|^s} + \inf_{y\in\R^N}\frac{u(x)-u(y)}{|x-y|^s},
\end{equation}
studied in \cite{CLM}. If $g\in C^{0,s}(\partial\Omega)$ the function $u$ solving
 \begin{equation}\label{infty}
\begin{cases}
\mathcal L_s u = 0 & \text{in }\Omega,\\
u = g & \text{on } \partial\Omega,
 \end{cases}
 \end{equation}
produces the optimal  H\"older extension to $\overline\Omega$ of the H\"older boundary data $g$, in the sense that the H\"older seminorm for $u$ in $\Omega$ is  always less than or equal to the one for the boundary data given on $\partial\Omega$. Problem \eqref{infty} is obtained taking limit as $p\to\infty$ in
\begin{equation}\label{cham-chum}
\begin{cases}
(-\Delta_p)^s u = 0 & \text{in }\Omega, \\
u = g & \text{on } \partial \Omega,
\end{cases}
\end{equation}
where $(-\Delta_p)^s$ is the so-called fractional $p-$laplacian defined by
$$
(-\Delta_p)^s u(x) = \text{p.v.} \int_{\R^N} \frac{|u(x)-u(y)|^{p-2}(u(x)-u(y))}{|x-y|^{N+sp}}\, dy.
$$
See \cite{FPL} for the details.

A bridge between fractional order theories and Orlicz-Sobolev settings is provided in \cite{FBS}. In that paper, the authors define the Fractional order Orlicz-Sobolev spaces, associated to the Orlicz function $G$, as
$$
W^{s,G}(\R^N):=\left\{   u\in L^G(\R^N)\colon \Phi_{s,G}(u)<\infty \right\},
$$
being $L^G(\R^N)$, the usual Orlicz-Lebegue space,
$$
L^G(\R^N)=\{u\in L^1_\text{loc}(\R^N)\colon \Phi_{G}(u) < \infty\},
$$
and the modulars $\Phi_G$ and $\Phi_{s, G}$ are determined by
$$
\Phi_{G}(u) := \int_{\R^N} G(u(x))\,dx,\quad \Phi_{s,G}(u) := \iint_{\R^N\times\R^N} G\left( \frac{u(x)-u(y)}{|x-y|^s} \right) \frac{ dx\,dy}{|x-y|^N}.
$$
See the preliminary Section \ref{preliminares} for the most relevant facts about these spaces.

In addition, in \cite{FBS}, the authors recover the classic Orlicz-Sobolev space corresponding to $G$, as $s\to1$, extending the celebrated result by Bourgain, Brezis and Mironescu \cite{BBM} to this fractional Orlicz-Sobolev setting. They conclude obtaining existence and uniqueness results to weak solutions related  to the fractional $g$-laplacian operator, defined as
$$
(-\Delta_g)^s u:=\text{p.v.} \int_{\R^N}  g\left(\frac{u(x)-u(y)}{|x-y|^s}\right)\frac{dy}{|x-y|^{N+s}}
$$
where p.v. stands for {\em in principal value} and $g=G'$. Observe that when $G(t)=|t|^p$, then $g(t)=|t|^{p-2}t$ and hence $(-\Delta_g)^s = (-\Delta_p)^s$ is the fractional $p-$laplacian.

This discussion leads to the main purpose of this paper, the study of the limit problem for
\begin{align} \label{ec.gn}
\begin{cases}
(-\Delta_{g_n})^s u_n= f &\quad \text{ in } \Omega,\\
u_n=0 &\quad \text{ on } \R^N \setminus \Omega,
\end{cases}
\end{align}
where $\Omega\subset \R^N$  is a bounded Lipschitz domain,  $s\in(0,1)$ and $f$ is a suitable given function.

The functions $g_n$ are odd and verify that $g_n(t)=G_n'(t)$, being $\{G_n(t)\}_{n\in\N}$ a sequence of Orlicz functions, satisfying the growth condition
$$
p^-_n\leq \frac{tg_n(t)}{G_n(t)}\leq p^+_n,\quad \text{for any }t>0.
$$

Our main concern in this work is to analyze  the passage to the limit as $n\to\infty$ in the spirit of \cite{FPL}, under the assumption that for some $\beta>1$ it holds
\begin{equation} \label{cond.beta}
p_n^- \to \infty \text{ as } n\to\infty \quad \text{and} \quad p_n^+\leq \beta p_n^- .
\end{equation}
Given that we could take the specific choice $g_n(t) = |t|^{p_n-2}t,$ so that the operator $(-\Delta_{g_n})^s$ agrees with the fractional $p_n-$laplacian, it is natural to expect that the limit problem is the same as the one obtained in \cite{FPL}, as it indeed happens.

The rest of the paper is organized as follows. In Section 2 we introduce some preliminary definitions and properties on fractional Orlicz-Sobolev spaces and  fractional $g-$Laplacian operators. Section 3 is devoted to provide for precise definitions of weak and viscosity solutions of Dirichlet $g-$Laplacian type problems as well as the  conditions under which weak solutions are viscosity ones. In section 4 we establish some a priori estimates ensuring the convergence of sequence of solutions $u_n$ to some function $u_\infty$, which can be explicitly determined when $f$ is positive.

%%%%%%%%%%%%%%%
%%%%%%%%%%%%%%%
%% PRELIMINARIES
%%%%%%%%%%%%%%%
%%%%%%%%%%%%%%%

\section{Preliminaries}\label{preliminares}
In this section we make a brief overview on the classical Orlicz-Sobolev spaces, as well as we introduce the Fractional order Orlicz-Sobolev Spaces, their main properties, studied in \cite{FBS}, and the associated fractional $g-$laplacian operator.
\subsection{Orlicz functions}
By an Orlicz function $G\colon\R \to \R$ we understand a function  fulfilling the following properties:
\begin{align*}
\tag{$G_0$}\label{G0} &G \text{ is even, continuous, convex,  increasing for $t>0$ and }  G(0)=0;\\
\tag{$G_1$}\label{G1} &G\text{ satisfies the }\Delta_2 \text{ condition, i.e.}\\& \text{there exists $\C>2$ such that } G(2t)\leq \C G(t), \text{ for any }t>0;\\
\tag{$G_2$}\label{G3} &  \lim_{x\to 0} \frac{G(x)}{x} = 0 \text{ and }\lim_{x\to \infty} \frac{G(x)}{x} = \infty.
\end{align*}
Denoting as $g(t)=G'(t)$ we assume that they are related through the following growth assumption
\begin{equation} \label{cond}
0<p^-\leq \frac{tg(t)}{G(t)} \leq p^+ \quad \forall t>0.
\end{equation}

An immediate consequence of \eqref{cond} is the following polynomial growth, both on $G$ and $g$.
\begin{lema}\label{poliG}
Assume $G$ is an Orlicz function satisfying \eqref{cond} and normalized as $G(1)=1$. Then the following polynomial growth hold
\begin{align}\label{cotaG1}
t^{p^-} &\le G(t) \le t^{p^+} \qquad \text{for } t>1 \\
\label{cotaG2}
t^{p^+} &\le G(t) \le t^{p^-} \qquad \text{for } 0<t<1\\
\label{cotag1}
p^- t^{p^- - 1} &\le g(t) \le p^+ t^{p^+ - 1} \qquad \text{for } t>1 \\
\label{cotag2}
p^- t^{p^+ - 1} &\le g(t) \le p^+ t^{p^- - 1} \qquad \text{for } 0<t<1.
\end{align}
\end{lema}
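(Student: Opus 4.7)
The plan is to rewrite the growth condition \eqref{cond} as a differential inequality for $\log G$, integrate it starting from the normalization point $t=1$, and then feed the resulting bounds on $G$ back into \eqref{cond} to derive the bounds on $g$.

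More precisely, since $G$ is increasing and positive on $(0,\infty)$ with $G(1)=1$, the function $\log G(t)$ is well-defined and differentiable on $(0,\infty)$ with derivative $g(t)/G(t)$. Condition \eqref{cond} is then equivalent to the inequality
\begin{equation*}
\frac{p^-}{t} \le \frac{d}{dt}\log G(t) \le \frac{p^+}{t}, \qquad t>0.
\end{equation*}
I would integrate this from $1$ to $t$ when $t>1$, using $\log G(1)=0$, to obtain
\begin{equation*}
p^-\log t \le \log G(t) \le p^+\log t,
\end{equation*}
which upon exponentiation yields \eqref{cotaG1}. For $0<t<1$ I would instead integrate from $t$ to $1$; since $\log t<0$, the inequalities flip after exponentiation and one gets \eqref{cotaG2}.

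Once the bounds on $G$ are established, the bounds on $g$ follow by rewriting \eqref{cond} as
\begin{equation*}
p^- \frac{G(t)}{t} \le g(t) \le p^+ \frac{G(t)}{t}
\end{equation*}
and substituting \eqref{cotaG1} in the range $t>1$ and \eqref{cotaG2} in the range $0<t<1$. For instance, for $t>1$ one uses $G(t)\ge t^{p^-}$ in the lower bound and $G(t)\le t^{p^+}$ in the upper bound to get \eqref{cotag1}, and the symmetric substitution handles \eqref{cotag2}.

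I do not anticipate any real obstacle: the only mild subtlety is being careful about the direction of the inequalities when $0<t<1$, since dividing by $\log t<0$ (equivalently, raising to a negative power) reverses them. Everything else is a direct integration of a separable first-order inequality and a substitution.
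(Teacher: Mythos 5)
Your proof is correct and follows the standard, essentially unique route for this kind of estimate: rewrite \eqref{cond} as $\frac{p^-}{t}\le\frac{d}{dt}\log G(t)\le\frac{p^+}{t}$, integrate from the normalization point $t=1$ (splitting into $t>1$ and $0<t<1$), exponentiate, and then feed the bounds on $G$ back into \eqref{cond} to get the bounds on $g$. The paper omits the proof entirely (calling it an ``immediate consequence''), and your argument is exactly the one it has in mind. The only cosmetic slip is in the phrase ``the inequalities flip after exponentiation'': for $0<t<1$ the sign reversal comes from multiplying by $-1$ (or equivalently from the change of sign of $\log t$) before or during exponentiation, not from exponentiation itself, which is monotone; the computation you carry out is nevertheless the right one.
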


The normalization condition $G(1)=1$ is by no means restrictive. In fact given $c>0$, if $G$ verifies \eqref{cond}, then $\tilde G(t) = c G(t)$ also satisfies \eqref{cond} with the same constants $p^\pm$. Therefore, choosing $c= G(1)^{-1}$ we conclude
$$
G(1)\min\{t^{p^-}; t^{p^+}\} \le G(t)\le G(1) \max\{t^{p^-}; t^{p^+}\}.
$$
Inequalities \eqref{cotag1} and \eqref{cotag2} are modified accordingly.

This simple observation allows us to prove that
\begin{equation}\label{estimare1}
G(a)\min\{ t^{p^-};  t^{p^+}\} \leq G(a t)\leq G(a)\max\{ t^{p^-};  t^{p^+}\},
\end{equation}
since $\hat G(t) = G(at)$ also fulfils \eqref{cond} with the same constants $p^\pm$.

The next lemma is deduced by combining \eqref{estimare1} with \eqref{cond}.
\begin{lema}
Let $G$ be an Orlicz function satisfying \eqref{cond} and let $\beta>1$ be such that $p^+\le \beta p^-$. Then, for every $t>0$ and $0<t<1$ it holds that
\begin{equation}\label{estimareg}
\beta^{-1} g(a) t^{p^+-1} \le g(a t)\le \beta g(a) t^{p^- - 1}.
\end{equation}
\end{lema}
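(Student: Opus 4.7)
The plan is to derive the claim by combining two instances of the hypothesis \eqref{cond}, one applied at the point $at$ and one at the point $a$, together with the already-established two-sided bound \eqref{estimare1} on $G(at)$.

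First I would rewrite \eqref{cond} in the form
\[
\frac{p^-}{\tau}\,G(\tau) \le g(\tau) \le \frac{p^+}{\tau}\,G(\tau),\qquad \tau>0,
\]
and apply this at $\tau = at$ to obtain
\[
\frac{p^-}{at}\,G(at) \le g(at) \le \frac{p^+}{at}\,G(at).
\]
Next, for $0<t<1$ and $p^- \le p^+$ we have $\max\{t^{p^-},t^{p^+}\}=t^{p^-}$ and $\min\{t^{p^-},t^{p^+}\}=t^{p^+}$, so the already-proven estimate \eqref{estimare1} reads
\[
G(a)\,t^{p^+} \le G(at) \le G(a)\,t^{p^-}.
\]
Substituting these into the previous display yields
\[
\frac{p^-}{a}\,G(a)\,t^{p^+-1} \le g(at) \le \frac{p^+}{a}\,G(a)\,t^{p^- - 1}.
\]

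Finally, to replace the factor $G(a)/a$ by $g(a)$ I would use \eqref{cond} at $\tau=a$, which gives the two inequalities
\[
\frac{G(a)}{a} \ge \frac{g(a)}{p^+}, \qquad \frac{G(a)}{a} \le \frac{g(a)}{p^-}.
\]
Inserting the lower bound into the left estimate and the upper bound into the right estimate produces
\[
\frac{p^-}{p^+}\,g(a)\,t^{p^+-1} \le g(at) \le \frac{p^+}{p^-}\,g(a)\,t^{p^- - 1},
\]
and the assumption $p^+ \le \beta p^-$ then yields $\beta^{-1}\le p^-/p^+$ and $p^+/p^- \le \beta$, concluding \eqref{estimareg}.

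There is really no hard step here; the whole argument is routine algebraic manipulation of the inequalities, and the only minor point requiring care is the correct identification of $\min$ and $\max$ of $\{t^{p^-}, t^{p^+}\}$ for $0<t<1$ (which flips relative to the $t>1$ case) so that the direction of each inequality in \eqref{estimare1} is used consistently with the direction being proved in \eqref{estimareg}.
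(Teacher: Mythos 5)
Your proof is correct and follows exactly the route the paper indicates (it labels the proof ``immediate'' by combining \eqref{estimare1} with \eqref{cond}); you have simply supplied the algebraic steps that the paper leaves to the reader. No issues.
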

The proof of this lemma is immediate.

The complementary function of an Orlicz function $G$ is defined as
$$
G^*(a) := \sup\{at-G(t)\colon t>0\}.
$$
From this definition  the following Young-type inequality holds
\begin{equation}\label{young}
at\le G(t) + G^*(a)\quad \text{for every } a,t\ge 0.
\end{equation}

It is easy to deduce the identity,
\begin{equation}\label{igualdad}
G^*(g(t)) = tg(t) - G(t),
\end{equation}
for every $t>0$, see \cite[Lemma 2.9]{FBS}. Now \eqref{igualdad} and \eqref{cond} yield that
\begin{equation}\label{cond*}
(p^+)'\le \frac{t g^*(t)}{G^*(t)}\le (p^-)',
\end{equation}
where $g^*(t) = (G^*)'(t)$. Observe that \eqref{cond*} implies that $G^*$ verifies the $\Delta_2$ condition. See \cite[Theorem 4.1]{Krasnoselskii}.

\begin{rem}\label{reflexivo}
 Notice that indeed, \cite[Theorem 4.1]{Krasnoselskii} entails that \eqref{cond} is equivalent to the fact that $G$ and $G^*$ both satisfy the $\Delta_2$ condition.
\end{rem}

At some point we need to impose a further restriction on $g(t)$, namely
\begin{equation} \label{condlib}
\frac{tg'(t)}{g(t)} \leq p^+-1 \quad \forall t>0.
\end{equation}

It is easy to check that the second inequality in condition \eqref{cond} follows from \eqref{condlib}.

Moreover, conditions \eqref{condlib} and \eqref{estimareg} imply the next lemma.
\begin{lema}
Assume that $G$ verifies \eqref{cond} and $g=G'$ verifies \eqref{condlib}. Then
\begin{equation}\label{estaSI}
g'(ct)\le (p^+-1)\beta \frac{g(c)}{c} t^{p^- - 2},
\end{equation}
for any $c>0$ and $0<t<1$.
\end{lema}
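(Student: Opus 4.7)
The plan is to combine the two hypotheses directly: use \eqref{condlib} to control $g'$ by $g$, and then use \eqref{estimareg} to rescale $g(ct)$ in terms of $g(c)$.

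More precisely, I would first apply \eqref{condlib} at the point $ct$ (where $c>0$ and $0<t<1$). Since $ct>0$, \eqref{condlib} yields
\begin{equation*}
g'(ct) \le (p^+ - 1)\,\frac{g(ct)}{ct}.
\end{equation*}

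Next I would invoke \eqref{estimareg} with $a=c$, which gives, for $0<t<1$,
\begin{equation*}
g(ct) \le \beta\, g(c)\, t^{p^- - 1}.
\end{equation*}
Substituting this bound into the previous inequality produces
\begin{equation*}
g'(ct) \le (p^+-1)\,\frac{\beta\, g(c)\, t^{p^--1}}{ct} = (p^+-1)\beta\,\frac{g(c)}{c}\,t^{p^- - 2},
\end{equation*}
which is the desired estimate \eqref{estaSI}.

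There is really no obstacle here: the lemma is a straightforward chaining of \eqref{condlib} and \eqref{estimareg}. The only subtlety is making sure that \eqref{estimareg} is applied in the right regime $0<t<1$ (so that the upper bound $\beta g(c) t^{p^- - 1}$ is the one we need), which matches the hypothesis of the lemma. All other steps are algebraic manipulations, so the proof should be only a few lines.
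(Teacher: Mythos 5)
Your proof is correct and is essentially the paper's argument: the paper introduces the auxiliary function $\tilde g(t)=g(ct)$, observes it still satisfies \eqref{condlib}, and then applies \eqref{estimareg}; you simply evaluate \eqref{condlib} directly at the point $ct$ and then apply \eqref{estimareg} at $a=c$, which is the same calculation without the notational detour.
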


\begin{proof}
Denote $\tilde g(t) = g(ct)$. Then is easy to see that $\tilde g$ also verifies \eqref{condlib}. Therefore, using \eqref{estimareg} we get
$$
\tilde g'(t)\le (p^+-1) \frac{\tilde g(t)}{t} = (p^+-1) \frac{g(ct)}{t}\le (p^+-1) \beta g(c) t^{p^- - 2}.
$$
This concludes the proof.
\end{proof}

\begin{rem}
Throughout this paper it will always be assumed that the Orlicz function $G$ satisfies \eqref{cond}. Whenever \eqref{condlib} is required it will be pointed out explicitly.
\end{rem}

\subsection{Fractional Orlicz--Sobolev spaces}
Given an Orlicz function $G$ and a fractional parameter $0 < s < 1$,  we consider the spaces $L^G(\R^N)$ and $W^{s,G}(\R^N)$ defined as
\begin{align*}
&L^G(\R^N) :=\left\{ u\colon \R^N \to \R \text{ measurable, such that }  \Phi_{G}(u) < \infty \right\},\\
&W^{s,G}(\R^N):=\left\{ u\in L^G(\R^N) \text{ such that } \Phi_{s,G}(u)<\infty \right\},
\end{align*}
where the modulars $\Phi_G$ and $\Phi_{s,G}$ are determined by
\begin{align*}
\Phi_{G}(u)&:=\int_{\R^N} G(u(x))\,dx,\\
\Phi_{s,G}(u)&:= \iint_{\R^N\times\R^N} G\left( \frac{u(x)-u(y)}{|x-y|^s} \right) \frac{ dx\,dy}{|x-y|^N}.
\end{align*}

Observe that,  Remark \ref{reflexivo} and \cite[Theorem 3.13.9 and Remark 3.13.10]{Kufner} guarantee the reflexivity of the space $L^G(\R^N)$. Moreover, by \cite{FBS}, $W^{s,G}(\R^N)$ is also reflexive.

The following definitions will simplify the notation. Given $u\in L^1_\text{loc}(\R^N)$  we define
\begin{equation}\label{Ds}
D^s u(x,y) := \frac{u(x)-u(y)}{|x-y|^s},
\end{equation}
the H\"older quotient of order $s$.

Let us also denote by $\mu$ the measure on $\R^N\times\R^N$ given by
\begin{equation}\label{mu}
d\mu := \frac{dxdy}{|x-y|^N}.
\end{equation}
 Note that $\mu$ is a regular Borel measure, though not a Radon measure, since any open set containing points on the diagonal $\Delta := \{(x,x)\colon x\in\R^N\}$ has infinite $\mu-$measure.

With these notations at hand, the fractional Orlicz-Sobolev space can be expressed as
$$
W^{s,G}(\R^N)=\{u\in L^G(\R^N) \colon D^s u\in L^G(\R^N\times\R^N, d\mu)\}
$$
while the associated modular is written as
$$
\Phi_{s,G}(u) = \iint_{\R^N\times\R^N} G(D^s u)\, d\mu.
$$

These spaces are endowed with the so-called Luxemburg norms
$$
\|u\|_G =  \|u\|_{L^G(\R^N)} := \inf\left\{\lambda>0\colon \Phi_G\left(\frac{u}{\lambda}\right)\le 1\right\}
$$
and
$$
\|u\|_{s,G} =  \|u\|_{W^{s,G}(\R^N)} := \|u\|_G + [u]_{s,G},
$$
where
$$
[u]_{s,G} :=\inf\left\{\lambda>0\colon \Phi_{s,G}\left(\frac{u}{\lambda}\right)\le 1\right\}.
$$
For $0<s<1$, the term $[\, \cdot\,]_{s,G}$ will be called the {\em $(s,G)-$Gagliardo seminorm}.

Young's inequality \eqref{young} easily implies the following H\"older-type inequality in Orlicz spaces
\begin{lema}
Let $G$ be an Orlicz function and $G^*$ its complementary function. Then for every $u\in L^G(\Omega)$ and $v\in L^{G^*}(\Omega)$, it holds
\begin{equation}\label{HolderG}
\int_\Omega |uv|\, dx\le 2 \|u\|_G \|v\|_{G^*}.
\end{equation}
\end{lema}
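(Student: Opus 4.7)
The plan is to reduce to normalized functions and apply Young's inequality \eqref{young} pointwise, then integrate. This is the standard Orlicz-space analogue of the classical Hölder inequality proof.

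First I would handle the trivial cases: if $\|u\|_G = 0$ or $\|v\|_{G^*} = 0$ the statement is immediate, and if either norm is infinite the right-hand side is $+\infty$. So assume both are finite and positive, and set
$$
\tilde u(x) := \frac{u(x)}{\|u\|_G}, \qquad \tilde v(x) := \frac{v(x)}{\|v\|_{G^*}}.
$$
The key preliminary fact I need is that, because $G$ and $G^*$ are continuous and increasing on $[0,\infty)$, the map $\lambda\mapsto\Phi_G(u/\lambda)$ is non-increasing, so letting $\lambda\downarrow\|u\|_G$ from above along values where $\Phi_G(u/\lambda)\le 1$ and invoking monotone convergence gives $\Phi_G(\tilde u)\le 1$, and likewise $\Phi_{G^*}(\tilde v)\le 1$.

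Next, for each fixed $x\in\Omega$, I apply Young's inequality \eqref{young} with $t = |\tilde u(x)|$ and $a = |\tilde v(x)|$ to obtain
$$
|\tilde u(x)\,\tilde v(x)| \le G(|\tilde u(x)|) + G^*(|\tilde v(x)|).
$$
Integrating over $\Omega$ and using the two modular bounds from the previous paragraph yields
$$
\int_\Omega |\tilde u\,\tilde v|\,dx \le \Phi_G(\tilde u) + \Phi_{G^*}(\tilde v) \le 2.
$$
Multiplying by $\|u\|_G\|v\|_{G^*}$ gives exactly \eqref{HolderG}.

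There is essentially no real obstacle — the only technical point requiring care is the claim that the Luxemburg infimum is actually achieved (i.e.\ $\Phi_G(u/\|u\|_G)\le 1$); this is where one either uses monotone convergence as sketched above, or, alternatively, invokes the $\Delta_2$ condition \eqref{G1} to know that $\Phi_G$ is continuous on $L^G(\R^N)$ and the unit ball of the modular coincides with the unit ball of the norm. Either route is elementary. The factor $2$ in \eqref{HolderG} is the natural cost of using Young's inequality in the additive form; a sharper constant would require a different (conjugate-function) manipulation, but is not needed here.
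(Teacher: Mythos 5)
Your proof is correct and follows essentially the same route as the paper: normalize so that $\|u\|_G=\|v\|_{G^*}=1$, apply Young's inequality \eqref{young} pointwise, integrate, and rescale. The paper asserts the modular equality $\Phi_G(u)=\Phi_{G^*}(v)=1$ in the normalized case (valid under $\Delta_2$), whereas you more cautiously prove only $\Phi_G(\tilde u)\le 1$ and $\Phi_{G^*}(\tilde v)\le 1$, which is all that is needed and is in fact the cleaner statement.
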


\begin{proof}
The proof can be found in any of the above mentioned references on Orlicz spaces, for instance in \cite{Krasnoselskii}. We include the proof for completeness.

Assume first that $\|u\|_G = \|v\|_{G^*}=1$, then $\Phi_G(u) = \Phi_{G^*}(v)=1$ and hence, using \eqref{young},
\begin{equation}\label{casiH}
\int_\Omega |uv|\, dx \le \Phi_G(u) + \Phi_{G^*}(v) = 2.
\end{equation}

Now, the proof of \eqref{HolderG} follows by considering $\bar u = u/\|u\|_G$ and $\bar v= v/\|v\|_{G^*}$ in \eqref{casiH}.
\end{proof}

We also introduce the space,
$$
W^{s,G}_0(\Omega) := \left\{u\in W^{s,G}(\R^N) \colon u=0 \text{ a.e. in } \R^N\setminus \Omega \right\}.
$$

Alternatively, one can consider
$$
\widetilde{W}^{s,G}(\Omega) := \overline{C_c^\infty(\Omega)}^{\|\cdot\|_{s,G}}.
$$

In the classical case, i.e. when $G(t)=t^p$, these spaces $W^{s,p}_0(\Omega)$ and $\widetilde{W}^{s,p}(\Omega)$ are known to coincide when $s<\tfrac{1}{p}$ or if $0<s<1$ and $\Omega$ has Lipschitz continuous boundary, see \cite{Kufner}.

Next lemma deals with the inclusion of Orlicz-Lebesgue spaces into the usual Lebesgue spaces.

\begin{lema} \label{lema.inclusion}
Let $G$ be an Orlicz function verifying \eqref{cond} and let $r\in [p^+,\infty]$. Then, there holds
$$
L^r(\Omega)\subset L^G(\Omega).
$$
Furthermore,
$$
\|u\|_G\le \max\{1; |\Omega|^{\frac{1}{p^-} - \frac{1}{r}} + |\Omega|^{\frac{1}{p^-} - \frac{p^+}{p^-}\frac{1}{r}}\} \|u\|_r.
$$
\end{lema}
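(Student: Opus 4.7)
I would prove the estimate by a direct computation of the Luxemburg norm $\|u\|_G$, exploiting the polynomial growth of $G$ provided by Lemma~\ref{poliG} together with H\"older's inequality. Set $a:=|\Omega|^{\frac{1}{p^-}-\frac{1}{r}}$, $b:=|\Omega|^{\frac{1}{p^-}-\frac{p^+}{p^-}\frac{1}{r}}$, $M:=\max\{1,a+b\}$, and let $\lambda:=M\|u\|_r$. The plan reduces to showing $\Phi_G(u/\lambda)\le 1$, because then $\|u\|_G\le \lambda$ by definition of $\|\cdot\|_G$, which is the claimed inequality.

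Split $\Omega = A\cup B$ according to whether $|u(x)|/\lambda\le 1$ or $|u(x)|/\lambda > 1$. Assuming the normalization $G(1)=1$ (as justified in the paragraph following Lemma~\ref{poliG}), that lemma gives $G(t)\le t^{p^-}$ for $t\in(0,1]$ and $G(t)\le t^{p^+}$ for $t>1$, applied pointwise at $t=|u|/\lambda$. Since $r\ge p^+\ge p^-$, H\"older's inequality on each of the two integrals, together with $A,B\subset\Omega$, produces
\begin{equation*}
\Phi_G(u/\lambda)\le \frac{\|u\|_r^{p^-}}{\lambda^{p^-}}\,|\Omega|^{1-p^-/r}+\frac{\|u\|_r^{p^+}}{\lambda^{p^+}}\,|\Omega|^{1-p^+/r}=M^{-p^-}a^{p^-}+M^{-p^+}b^{p^-},
\end{equation*}
where I use the identities $a^{p^-}=|\Omega|^{1-p^-/r}$ and $b^{p^-}=|\Omega|^{1-p^+/r}$.

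It remains to check that the last quantity is at most $1$, which I would do by cases. If $M=a+b\ge 1$, then $M^{-p^+}\le M^{-p^-}$ since $p^+\ge p^-$, so the sum is dominated by $(a/M)^{p^-}+(b/M)^{p^-}$; writing $x=a/M$, $y=b/M$ one has $x+y=1$ with $x,y\in[0,1]$. Convexity of $G$ together with $G(0)=0$ forces $tg(t)\ge G(t)$, hence $p^-\ge 1$, so the elementary estimate $x^{p^-}+y^{p^-}\le x+y=1$ closes this case. In the complementary case $M=1>a+b$, one has $a,b\in[0,1)$ and the same monotonicity gives $a^{p^-}+b^{p^-}\le a+b<1$. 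The borderline $r=\infty$ (where $a=b=|\Omega|^{1/p^-}$ and $a+b=2|\Omega|^{1/p^-}$) is treated identically from the pointwise bound $|u|/\lambda\le 1/M$, which yields $\Phi_G(u/\lambda)\le |\Omega|/M^{p^-}\le 1$. The main obstacle is purely bookkeeping of the exponents, together with the elementary inequality $x^{p^-}+y^{p^-}\le 1$ for $x+y=1$ and $p^-\ge 1$; everything else is routine.
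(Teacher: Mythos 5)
Your proof is correct, and it takes a genuinely different route from the paper's. The paper normalizes to $\|u\|_r=1$, estimates the modular $\Phi_G(u)\le |\Omega|^{1-p^+/r}+|\Omega|^{1-p^-/r}$ by splitting over $\{|u|\ge 1\}$ and $\{|u|<1\}$, and then passes from a modular bound to the Luxemburg norm via $\|u\|_G\le\Phi_G(u)^{1/p^-}$ (valid when $\|u\|_G\ge 1$), using subadditivity of $t\mapsto t^{1/p^-}$ to reach the stated constant. You instead verify the Luxemburg condition $\Phi_G(u/\lambda)\le 1$ head-on for the explicit candidate $\lambda=M\|u\|_r$, which dispenses with the modular-to-norm conversion entirely and is more self-contained, at the cost of the case analysis on $M$ and the elementary inequality $x^{p^-}+y^{p^-}\le x+y=1$. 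Your exponent bookkeeping checks out ($a^{p^-}=|\Omega|^{1-p^-/r}$, $b^{p^-}=|\Omega|^{1-p^+/r}$), the H\"older step is legitimate because $r\ge p^+\ge p^-$, and $M^{-p^+}\le M^{-p^-}$ for $M\ge 1$ is applied correctly, as is the $r=\infty$ case. You also correctly flag two hypotheses that the paper's proof uses tacitly: the normalization $G(1)=1$ (required for the clean bounds $G(t)\le t^{p^\pm}$ from Lemma~\ref{poliG}) and $p^-\ge 1$, which your convexity observation $tg(t)\ge G(t)$ justifies — the paper needs this same fact for the subadditivity of $t\mapsto t^{1/p^-}$. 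Both approaches yield the stated constant; the paper's intermediate quantity $\bigl(|\Omega|^{1-p^+/r}+|\Omega|^{1-p^-/r}\bigr)^{1/p^-}$ is marginally sharper than the sum you and the lemma state, but that gap is immaterial.
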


\begin{proof}
A proof of this fact can be found, for instance, in \cite[Theorem 3.17.1]{Kufner}. However, we  include the proof taking care of  the explicit dependence of the constants on $G$, which will be necessary in forthcoming arguments.

Assume first that $p^+\le r<\infty$.   We decompose $\Phi_G(u)$ as follows.
$$
\int_\Omega G(u)\, dx = \left(\int_{|u|\ge 1} + \int_{|u|<1}\right) G(u)\, dx = I + II.
$$
Assume that $\|u\|_r=1$. An estimate on $I$ follows from  \eqref{cotaG1}. Precisely,
$$
I\le \int_{\Omega} |u|^{p^+}\, dx \le \|u\|_r^{p^+} |\Omega|^{1-\frac{p^+}{r}} = |\Omega|^{1-\frac{p^+}{r}}.
$$
On the other hand, using now \eqref{cotaG2}  we obtain
$$
II\le |\Omega|^{1-\frac{p^-}{r}}.
$$
Now, as long as $\|u\|_G\ge 1$, we have that
$$
\|u\|_G\le \left(\int_\Omega G(u)\, dx\right)^\frac{1}{p^-},
$$
and this concludes the case $\|u\|_r=1$. 

If $\|u\|_r\neq 1$ the result follows using the homogeneity of the norm $\|\cdot\|_G$.

The case $r=\infty$ is analogous and is left to the reader.
\end{proof}

Furthermore, there holds an embedding result of fractional Orlicz-Sobolev spaces into the usual fractional Sobolev ones. We need first the following Lemma.
\begin{lema}\label{omega=Rn}
Let $\Omega\subset\R^N$ be a bounded Lipschitz domain. Let $0<t<1<q<\infty$ be such that $t\neq \frac{1}{q}$. Then,
$$
\iint_{\R^N\times\R^N} |D^t u|^q\, d\mu\le C \iint_{\Omega\times\Omega} |D^t u|^q\, d\mu\
$$
for every $u\in C^\infty_c(\Omega)$  and some constant $C=C(N,t,q,\Omega)$.
\end{lema}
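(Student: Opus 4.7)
The plan is to exploit that $u\in C_c^\infty(\Omega)$ vanishes outside $\Omega$, so the only new contribution to the full integral over $\R^N\times\R^N$ comes from the "mixed" region $\Omega\times(\R^N\setminus\Omega)$. Concretely, I would split
\begin{equation*}
\iint_{\R^N\times\R^N} |D^t u|^q\, d\mu
= \iint_{\Omega\times\Omega} |D^t u|^q\, d\mu
\;+\; 2\int_\Omega \int_{\R^N\setminus\Omega} \frac{|u(x)|^q}{|x-y|^{N+tq}}\,dy\,dx,
\end{equation*}
the piece $(\R^N\setminus\Omega)\times(\R^N\setminus\Omega)$ being zero and the two mixed pieces being equal by symmetry.

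Next I would estimate the inner integral pointwise. For $x\in\Omega$ set $d_x:=\dist(x,\partial\Omega)$. Since $B_{d_x}(x)\subset\Omega$, one has $\R^N\setminus\Omega\subset\R^N\setminus B_{d_x}(x)$, and passing to polar coordinates gives
\begin{equation*}
\int_{\R^N\setminus\Omega}\frac{dy}{|x-y|^{N+tq}}\;\le\;\int_{|x-y|\ge d_x}\frac{dy}{|x-y|^{N+tq}}\;=\;\frac{\omega_{N-1}}{tq}\,d_x^{-tq}.
\end{equation*}
Thus the problem is reduced to controlling the weighted Hardy-type quantity
\begin{equation*}
\int_\Omega \frac{|u(x)|^q}{\dist(x,\partial\Omega)^{tq}}\,dx
\end{equation*}
by the Gagliardo seminorm $\iint_{\Omega\times\Omega}|D^t u|^q\,d\mu$.

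At this point I would invoke the fractional Hardy inequality on bounded Lipschitz domains: for $0<t<1$, $1<q<\infty$ with $tq\neq 1$ there exists $C=C(N,t,q,\Omega)$ such that
\begin{equation*}
\int_\Omega\frac{|u(x)|^q}{\dist(x,\partial\Omega)^{tq}}\,dx\;\le\;C\iint_{\Omega\times\Omega}|D^t u|^q\,d\mu
\end{equation*}
for every $u\in C_c^\infty(\Omega)$ (this is the Dyda-type fractional Hardy inequality; see e.g.\ Dyda's paper on fractional Hardy inequalities). Combining with the previous two displays yields the claimed bound.

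The main subtlety is precisely the invocation of the Hardy inequality: the hypothesis $t\neq 1/q$ is exactly the non-critical condition $tq\neq 1$ required for it to hold, and the dependence of the constant on the Lipschitz character of $\partial\Omega$ is what produces the $\Omega$-dependence of $C$. The rest of the argument is a routine splitting plus a radial estimate; no additional regularity of $u$ beyond $C_c^\infty(\Omega)$ is needed.
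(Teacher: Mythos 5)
Your decomposition of the double integral, and the radial bound on $\int_{\R^N\setminus\Omega}|x-y|^{-N-tq}\,dy$, are both correct, and reducing the claim to a fractional Hardy inequality is the right idea---it is essentially what lies behind the Grisvard citation that constitutes the paper's entire proof, so your version is more self-contained and informative. The gap is in the final step. On a \emph{bounded} Lipschitz domain, the fractional Hardy inequality with the intrinsic $\Omega\times\Omega$ Gagliardo seminorm on the right,
$$
\int_\Omega\frac{|u(x)|^q}{\dist(x,\partial\Omega)^{tq}}\,dx\le C\iint_{\Omega\times\Omega}|D^t u|^q\,d\mu,\qquad u\in C_c^\infty(\Omega),
$$
holds \emph{only} when $tq>1$; it fails when $tq<1$, so the condition ``$tq\neq 1$'' you quote is not the correct one. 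Indeed, with $\Omega=(0,1)$ take cutoffs $u_n\in C_c^\infty(\Omega)$ equal to $1$ on $(1/n,1-1/n)$, supported in $(1/(2n),1-1/(2n))$, with $0\le u_n\le 1$ and $|u_n'|\le Cn$: the Hardy integral tends to $\int_0^1\min(x,1-x)^{-tq}\,dx$, which is a finite positive number precisely because $tq<1$, whereas a direct computation gives $\iint_{\Omega\times\Omega}|D^t u_n|^q\,d\mu=O(n^{tq-1})\to 0$. (Dyda's theorem on \emph{bounded} Lipschitz domains is stated under $tq>1$; for $tq<1$ he establishes Hardy inequalities only on unbounded domains with plump complement.) The same family of cutoffs in fact shows that the conclusion of Lemma~\ref{omega=Rn} is false for $t<1/q$: the mixed term keeps the left-hand side bounded away from zero while the right-hand side tends to zero. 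So the hypothesis should really be $t>1/q$ rather than $t\neq 1/q$; the latter is imported from Grisvard's statement, which concerns \emph{full} $W^{t,q}$ norms and does not localize to seminorms when $tq<1$ (Poincar\'e with the intrinsic seminorm on $C_c^\infty(\Omega)$ fails there for the same reason). This is harmless for the paper, where the Lemma is always applied with $tq>N\ge 1$, and your proof is repaired simply by invoking the Hardy inequality in the regime $tq>1$ where it actually holds.
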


\begin{proof}
This is the content of \cite[Corollary 1.4.4.5]{grisvard}.
\end{proof}

\begin{prop} \label{prop.reg}
Let $G$ be an Orlicz function satisfying condition \eqref{cond}. Then, given $1\le q<p^-$ and $0<t<s$ it holds that
$$
[u]_{t,q;\Omega} := \left(\iint_{\Omega\times\Omega}|D^t u|^q\, d\mu\ \right)^\frac{1}{q}\le C(\Omega, N, q, s-t) [u]_{s,G},
$$
for every $u\in W^{s,G}_0(\Omega)$.
\end{prop}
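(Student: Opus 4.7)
The plan is to exploit the pointwise identity $D^t u(x,y) = D^s u(x,y)\,|x-y|^{s-t}$, which rewrites the quantity to bound as
$$
\iint_{\Omega\times\Omega} |D^t u|^q\,d\mu = \iint_{\Omega\times\Omega} |D^s u|^q\,|x-y|^{q(s-t)}\,d\mu.
$$
Because $\Omega$ is bounded, the extra weight $|x-y|^{q(s-t)}$ is pointwise dominated by $\diam(\Omega)^{q(s-t)}$, and since $s>t$ the resulting power $|x-y|^{q(s-t)-N}$ is integrable near the diagonal on $\Omega\times\Omega$. So the only real task is to compare $|D^s u|^q$ with $G(D^s u)$.

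To do so, I would first normalize by the homogeneity of the Luxemburg seminorm, assuming $[u]_{s,G}=1$, which by definition together with the $\Delta_2$ condition yields $\Phi_{s,G}(u)\le 1$. After the harmless assumption $G(1)=1$ (absorbed into the final constant otherwise), I would split the domain of integration as $A\cup B$ with $A:=\{(x,y)\in\Omega\times\Omega\colon |D^s u(x,y)|\ge 1\}$. On $A$, the polynomial lower bound \eqref{cotaG1} together with $q<p^-$ gives
$$
|D^s u|^q \le |D^s u|^{p^-} \le G(D^s u),
$$
so this contribution is at most $\diam(\Omega)^{q(s-t)}\,\Phi_{s,G}(u)\le \diam(\Omega)^{q(s-t)}$. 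On the complement $B$ one has $|D^s u|^q\le 1$ directly, so this piece is bounded by
$$
\iint_{\Omega\times\Omega} |x-y|^{q(s-t)-N}\,dx\,dy,
$$
which is finite precisely because $s>t$ and $\Omega$ is bounded; a polar change of coordinates yields an explicit constant depending on $|\Omega|$, $\diam(\Omega)$, $N$ and $s-t$.

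Summing the two contributions and taking a $q$-th root yields $[u]_{t,q;\Omega}\le C(\Omega,N,q,s-t)$ under the normalization $[u]_{s,G}=1$, and then scaling back by homogeneity delivers the desired inequality for arbitrary $u\in W^{s,G}_0(\Omega)$. I do not expect any serious obstacle here: the only subtle point is that the lower envelope for $G$ below $1$ provided by \eqref{cotaG2} is only $G(\tau)\ge \tau^{p^+}$, too weak to control $\tau^q$ with $q<p^-\le p^+$. The $A/B$ split neatly sidesteps this, since on $B$ the integrand is already at most one pointwise and no comparison with $G$ is required.
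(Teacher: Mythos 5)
Your proof is correct and follows essentially the same route as the paper's: rewrite $|D^t u|^q = |D^s u|^q |x-y|^{q(s-t)}$, normalize $[u]_{s,G}=1$, split $\Omega\times\Omega$ according to whether $|D^s u|\gtrless 1$, use the polynomial lower bound $G(a)\ge a^{p^-}\ge a^q$ on the set where $|D^s u|\ge 1$ and the trivial bound $|D^s u|^q\le 1$ on the complement, estimate the remaining weight integral $\iint_{\Omega\times\Omega}|x-y|^{q(s-t)-N}\,dx\,dy$, and scale back by homogeneity. Your closing remark about why \eqref{cotaG2} is too weak below $1$ and why the split sidesteps this is exactly the right observation and matches the logic of the paper's decomposition.
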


\begin{proof}
The proof follows the ideas of \cite[Section 3.17]{Kufner}. In fact, from \eqref{cotaG1} it is inferred that,
\begin{equation}\label{cotaG}
a^q\le a^{p^-} \le G(a), \quad\text{for every } a\ge 1.
\end{equation}
Now, let $u\in W^{s,G}_0(\Omega)$ and define
\begin{align*}
& A := \{(x,y)\subset \Omega\times\Omega\colon |D^s u(x,y)|\le 1\},\\
& B := \Omega\times\Omega\setminus A.
\end{align*}

We compute,
\begin{align*}
\iint_{\Omega\times\Omega} |D^tu|^q&\, d\mu = \iint_{\Omega\times\Omega} |D^su|^q |x-y|^{(s-t)q}\, d\mu\\&=\left(\iint_A + \iint_B\right)|D^su(x,y)|^q \frac{1}{|x-y|^{N-(s-t)q}}\, dxdy\\
&= I + II.
\end{align*}
Notice that
$$
I\le \iint_{\Omega\times\Omega} \frac{1}{|x-y|^{N-(s-t)q}}\, dxdy \le |\Omega| N\omega_N \frac{\d^{q(s-t)}}{q(s-t)},
$$
where $\d = \d(\Omega)$ is the diameter of $\Omega$.

To estimate the second term, we invoke \eqref{cotaG} and obtain
$$
II \le \d^{q(s-t)} \Phi_{s,G}(u)<\infty.
$$

Therefore, as long as $[u]_{s,G} = 1$, then $\Phi_{s,G}(u)=1$ and hence
\begin{equation}\label{constante.explicita}
[u]_{t,q;\Omega}\le \left(\frac{|\Omega| N\omega_N}{q(s-t)} + 1\right)^\frac{1}{q} \d^{s-t}.
\end{equation}
From this inequality, the proof concludes from homogeneity of the seminorm.
\end{proof}

\begin{cor}\label{cor.inmersion}
Under the same assumptions and notations of the previous proposition, it holds that
$$
W^{s,G}_0(\Omega)\subset W^{t,q}_0(\Omega),
$$
with continuous inclusion.
\end{cor}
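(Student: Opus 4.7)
The goal is to bound $\|u\|_{W^{t,q}(\R^N)} = \|u\|_{L^q(\Omega)} + [u]_{t,q;\R^N}$ by a constant multiple of $\|u\|_{s,G}$ for every $u \in W^{s,G}_0(\Omega)$. Since the defining zero-extension $u\equiv 0$ on $\R^N\setminus\Omega$ is preserved under the inclusion, membership in $W^{t,q}_0(\Omega)$ is automatic once this norm inequality is established, so the task splits into controlling the $L^q$-part and the Gagliardo part separately.

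For the $L^q$-component, I would use the polynomial growth from Lemma \ref{poliG}. On $\{|u|\le 1\}$ one has $|u|^q\le 1$, while on $\{|u|>1\}$ the assumption $q<p^-$ combined with \eqref{cotaG1} (after normalizing $G(1)=1$) gives $|u|^q \le |u|^{p^-}\le G(|u|)$. Integrating produces $\int_\Omega |u|^q\,dx \le |\Omega| + \Phi_G(u)$. Applying this bound to $u/\|u\|_{L^G(\Omega)}$ and invoking the defining property of the Luxemburg norm then yields $\|u\|_{L^q(\Omega)}\le (|\Omega|+1)^{1/q}\,\|u\|_{L^G(\Omega)}$.

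For the Gagliardo seminorm, Proposition \ref{prop.reg} already delivers $[u]_{t,q;\Omega} \le C\,[u]_{s,G}$, but only over the domain $\Omega\times\Omega$, whereas $W^{t,q}(\R^N)$ requires integration over $\R^N\times\R^N$. The plan is to bridge the gap using Lemma \ref{omega=Rn}, which for $u\in C_c^\infty(\Omega)$ supplies
$$
\iint_{\R^N\times\R^N}|D^tu|^q\,d\mu \;\le\; C\iint_{\Omega\times\Omega}|D^tu|^q\,d\mu.
$$
Chaining with Proposition \ref{prop.reg} gives $[u]_{t,q;\R^N}\le C'\,[u]_{s,G}$ on $C_c^\infty(\Omega)$. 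To extend to a general $u\in W^{s,G}_0(\Omega)$, I would take an approximating sequence $u_k\in C_c^\infty(\Omega)$ with $u_k\to u$ in the $(s,G)$-norm; boundedness of $(u_k)$ in $W^{t,q}(\R^N)$ follows from the previous step, and passing to the limit (either directly, or through Fatou's lemma applied to the Gagliardo integrand along a pointwise a.e. subsequence) transfers the inequality to $u$.

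The step I expect to be the main obstacle is the density of $C_c^\infty(\Omega)$ in $W^{s,G}_0(\Omega)$ for Lipschitz $\Omega$. In the classical case $G(t)=t^p$ this is available from \cite{Kufner}, and in the Orlicz setting the standard proof (translation off the boundary, cutoff, and mollification) should go through using the $\Delta_2$ property of both $G$ and $G^*$ ensured by Remark \ref{reflexivo}, which guarantees convergence in modular, and hence in Luxemburg norm. As a safety net, the lower-semicontinuity route via Fatou's lemma is robust enough that one only needs $u_k\to u$ a.e.\ (rather than strong $W^{s,G}$-convergence) to upgrade the bound from the smooth approximants to $u$.
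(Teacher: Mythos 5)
Your approach matches the paper's one-line proof exactly: Proposition \ref{prop.reg} controls the Gagliardo seminorm on $\Omega\times\Omega$, Lemma \ref{omega=Rn} bridges to $\R^N\times\R^N$, and the $L^q$ bound you spell out (via the polynomial growth in Lemma \ref{poliG} and the Luxemburg norm) supplies the part the paper tacitly sweeps into the word ``immediate.'' You correctly flag the only genuine subtlety, namely that Lemma \ref{omega=Rn} is stated for $C_c^\infty(\Omega)$ only, so that either density of $C_c^\infty(\Omega)$ in $W^{s,G}_0(\Omega)$ or an extension of the Grisvard inequality beyond smooth functions is implicitly invoked; note, however, that your Fatou ``safety net'' still needs the $(s,G)$-seminorms of the approximants to be controlled, so a.e.\ convergence alone does not fully sidestep the density question.
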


\begin{proof}
The proof is immediate from Proposition \ref{prop.reg} and Lemma \ref{omega=Rn}
\end{proof}

\begin{rem}\label{remark.clave}
In the course of the proofs,  it will be necessary to understand the asymptotic behavior of the constants. We state this behavior for future reference. Inequality \eqref{constante.explicita} reveals that the constant $C(\Omega, N, q, s-t)$ in Proposition \ref{prop.reg} behaves as
$$
\limsup_{q\to\infty} C(\Omega, N, q, s-t) \le \d^{s-t},
$$
and hence,
$$
\limsup_{t\to s} \limsup_{q\to\infty} C(\Omega, N, q, s-t)\le 1.
$$
\end{rem}

We finish this section showing some Poincar\'e type inequalities. Next theorem states that $[\cdot]_{s,G}$ is an equivalent norm to $\|\cdot\|_{s,G}$ in $W^{s,G}_0(\Omega)$.
\begin{thm}\label{thm.poincare}
Let $\Omega\subset \R^N$ be open and bounded. Then,
$$
\Phi_G(u) \le  \Phi_{s,G}(C \d^s u),
$$
for every $0<s<1$ and $u\in W^{s,G}_0(\Omega)$, where $\d=\d(\Omega)$ stands for the diameter of $\Omega$ and $C = \left(\frac{sp^+}{N\omega_N}\right)^\frac{1}{p^-}$.
\end{thm}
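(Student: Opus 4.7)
The strategy is to prove the pointwise-in-$x$ estimate
$$
G(u(x)) \;\le\; \int_{\R^N} G\!\left(\frac{C\d^s(u(x)-u(y))}{|x-y|^s}\right) \frac{dy}{|x-y|^N}, \qquad x\in\Omega,
$$
and then integrate it over $\Omega$. Integrating over a smaller region only decreases the right-hand side, so the plan is to throw away most of the $y$-integration and keep only the region where the kernel is both large and easily computable, namely $\{|x-y|\ge\d\}$.

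First, I would exploit the Dirichlet condition. Since $\diam(\Omega)=\d$, for every $x\in\Omega$ and every $y$ with $|x-y|\ge\d$ the point $y$ lies outside $\Omega$, so $u(y)=0$. Hence, writing $v:=C\d^s u$,
$$
\Phi_{s,G}(C\d^s u) \;\ge\; \int_\Omega\!\int_{\{|x-y|\ge \d\}} G\!\left(\frac{v(x)}{|x-y|^s}\right) \frac{dy\,dx}{|x-y|^N}.
$$
Second, I would evaluate the inner integral. Switching to polar coordinates centered at $x$ gives $N\omega_N\int_{\d}^\infty G(v(x)/r^s)\,dr/r$, and then the substitution $t=v(x)/r^s$ (so that $dr/r=-(1/s)\,dt/t$) converts it into
$$
\frac{N\omega_N}{s}\int_0^{|v(x)|/\d^s} G(t)\,\frac{dt}{t} \;=\; \frac{N\omega_N}{s}\int_0^{C|u(x)|} G(t)\,\frac{dt}{t}.
$$

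Third, I would use the upper part of the growth condition \eqref{cond}, which is equivalent to $G(t)/t^{p^+}$ being non-increasing. This yields the crucial one-line estimate
$$
\int_0^T G(t)\,\frac{dt}{t} \;\ge\; G(T)\int_0^T \left(\frac{t}{T}\right)^{p^+}\frac{dt}{t} \;=\; \frac{G(T)}{p^+},
$$
applied with $T=C|u(x)|$. Combining the three steps yields
$$
\Phi_{s,G}(C\d^s u) \;\ge\; \frac{N\omega_N}{s p^+}\int_\Omega G(Cu(x))\,dx \;=\; \frac{1}{C^{p^-}}\int_\Omega G(Cu(x))\,dx,
$$
where the identity $N\omega_N/(sp^+)=C^{-p^-}$ is precisely the reason for the choice of the constant $C$. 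Fourth, I would conclude by invoking the scaling inequality \eqref{estimare1} in the form $G(Ca)\ge C^{p^-}G(a)$, so that the $C^{p^-}$ factor cancels and the right-hand side bounds $\Phi_G(u)$ from above.

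The main obstacle I anticipate is not conceptual but bookkeeping: the entire statement hinges on the exact calibration of $C$ so that the constant $N\omega_N/(sp^+)$ produced by the polar integration and the homogeneity exponent $p^-$ from \eqref{estimare1} cancel. A secondary issue is that the scaling inequality $G(Ca)\ge C^{p^-}G(a)$ from \eqref{estimare1} is immediate when $C\ge 1$ but requires invoking the $p^+$ branch of \eqref{estimare1} if $C<1$; both branches are available, so only a small case distinction is needed to close the argument.
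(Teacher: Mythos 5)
Your proof is correct and reaches the same constant by essentially the same opening move as the paper: restrict the double integral to $\{|x-y|\ge\d\}$ where the Dirichlet condition kills $u(y)$. From there you diverge in how you handle the inner $y$-integral. The paper immediately applies the scaling inequality \eqref{estimare1} pointwise (writing $C\d^s/|x-y|^s$ as a product and peeling off $C^{p^-}$ and $(\d/|x-y|)^{sp^+}$) and only then integrates in $y$, whereas you first evaluate the $y$-integral exactly by the polar substitution $t=v(x)/r^s$, arriving at $\tfrac{N\omega_N}{s}\int_0^{C|u(x)|}G(t)\,\tfrac{dt}{t}$, and then use the monotonicity of $G(t)/t^{p^+}$ (a direct consequence of the upper bound in \eqref{cond}) to produce the factor $1/p^+$. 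Your route is a little cleaner conceptually — it isolates the sharp one-line estimate $\int_0^T G\,\tfrac{dt}{t}\ge G(T)/p^+$ rather than re-invoking \eqref{estimare1} with the radial variable inside — but both paths finish with the same reduction $G(Cu)\ge C^{p^-}G(u)$. One caveat worth flagging: your final remark that the case $C<1$ "closes with a small case distinction" is not right as stated. Invoking the $p^+$ branch of \eqref{estimare1} when $C<1$ yields $G(Cu)\ge C^{p^+}G(u)$, and combined with $N\omega_N/(sp^+)=C^{-p^-}$ this gives only $\Phi_{s,G}(C\d^s u)\ge C^{p^+-p^-}\Phi_G(u)$, and $C^{p^+-p^-}\le 1$ when $C<1$, so the desired inequality does not follow. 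The paper's own proof has exactly the same issue, quietly absorbed in the line "without loss of generality, we can assume that $C\ge 1$" (which is in fact not a harmless assumption, since enlarging $C$ only makes the right-hand side bigger); in the paper's intended application $p_n^\pm\to\infty$, so eventually $C\ge 1$ and this is moot, but the statement as written should really have $C=\max\bigl\{1,(sp^+/(N\omega_N))^{1/p^-}\bigr\}$.
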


\begin{proof}
The proof is similar to that contained in \cite[Corollary 6.2]{FBS}.

Indeed, observe that whenever $x\in \Omega$ and $|x-y|\ge \d$, then $y\not\in\Omega$. Thus,
\begin{align*}
\Phi_{s,G}(C \d^s u) &= \iint_{\R^N\times\R^N} G(C\d^s D^s u(x,y))\, d\mu(x,y)\\
& \ge \int_\Omega \int_{|x-y|\ge \d} G\left(\frac{C\d^s}{|x-y|^s} u(x)\right)\, \frac{dxdy}{|x-y|^N}.
\end{align*}
Without loss of generality, we can assume that $C\ge 1$. Invoking condition \eqref{estimare1} we get
$$
\Phi_{s,G}(C \d^s u)\ge C^{p^-} \d^{sp^+} \left(\int_\Omega G(u(x))\, dx\right) \left(\int_{|z|\ge \d} \frac{dz}{|z|^{N+sp^+}}\right).
$$
At this point the result follows just observing that
$$
\int_{|z|\ge \d} \frac{dz}{|z|^{N+sp^+}} = \frac{N\omega_N}{sp^+} \d^{-sp^+},
$$
and choosing $C$ appropriately.
\end{proof}

As a corollary we infer the following Poincar\'e's  inequality for fractional Luxemburg type norms.
\begin{cor} \label{poincare.norma}
Let $\Omega\subset \R^N$ be open and bounded. Then
$$
\|u\|_{G} \leq C \d^s [u]_{s,G}
$$
for every $0<s<1$ and $u\in W^{s,G}_0(\Omega)$, where $C$ depends on $N, s, p^+$ and $p^-$.

\end{cor}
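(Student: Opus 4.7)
The strategy is to deduce the norm inequality directly from the modular inequality in Theorem \ref{thm.poincare} by exploiting the homogeneity built into the definition of the Luxemburg norm. Let $C_0 = \left(\frac{sp^+}{N\omega_N}\right)^{1/p^-}$ be the constant produced by Theorem \ref{thm.poincare}, and set $\lambda := [u]_{s,G}$. By definition of the $(s,G)$-Gagliardo seminorm (and a short limiting argument if the infimum is not attained), we have $\Phi_{s,G}(u/\lambda)\le 1$.

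Next, I would apply Theorem \ref{thm.poincare} to the function $v := u/(C_0 \d^s \lambda)\in W^{s,G}_0(\Omega)$. The theorem yields
\[
\Phi_G\!\left(\frac{u}{C_0\d^s\lambda}\right) \le \Phi_{s,G}\!\left(C_0\d^s\cdot\frac{u}{C_0\d^s\lambda}\right) = \Phi_{s,G}\!\left(\frac{u}{\lambda}\right)\le 1 .
\]
From the very definition of the Luxemburg norm, this inequality shows that $C_0\d^s\lambda$ is an admissible constant in the infimum defining $\|u\|_G$, whence
\[
\|u\|_G \le C_0\d^s \lambda = C_0\d^s [u]_{s,G},
\]
which is the desired inequality with $C = C_0$ depending only on $N, s, p^-, p^+$.

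There is essentially no serious obstacle here: the proof is a one-line reduction to Theorem \ref{thm.poincare}, using only that the modulars transform correctly under scalar multiplication and the standard characterization of Luxemburg norms via modulars. The only mild subtlety is making sure the inequality $\Phi_{s,G}(u/\lambda)\le 1$ holds at $\lambda=[u]_{s,G}$ itself; this is handled either by the convexity/monotonicity of $\Phi_{s,G}$ together with the $\Delta_2$ condition \eqref{G1} (so that the infimum is attained), or by replacing $\lambda$ with $\lambda+\varepsilon$ and letting $\varepsilon\to 0^+$ at the end.
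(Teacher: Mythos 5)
Your proof is correct and follows essentially the same route as the paper's: apply the modular Poincar\'e inequality of Theorem \ref{thm.poincare} to the rescaled function $u/(C\d^s[u]_{s,G})$ and then read off the Luxemburg norm bound from the resulting modular inequality. The only difference is cosmetic: the paper asserts $\Phi_{s,G}(u/[u]_{s,G})=1$ outright (which holds under $\Delta_2$), while you carefully flag that only $\le 1$ is needed and can be obtained by an $\varepsilon$-argument if necessary.
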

\begin{proof}
Given $u\in W^{s,G}_0(\Omega)$ applying Theorem \ref{thm.poincare} to the function $\frac{u}{C\d^s [u]_{s,G}}$,  we get
$$
\Phi_G\left(\frac{u}{C\d^s [u]_{s,G}}\right) \leq \Phi_{s,G}\left( \frac{u}{[u]_{s,G}}\right) =1
$$
by definition of the Luxemburg norm. Consequently,
$$
\|u\|_G = \inf\{\lam\colon \Phi_G\left(\frac{u}{\lam}\right)\leq 1\} \leq C \d^s [u]_{s,G}
$$
as desired.
\end{proof}

\subsection{The fractional $g-$laplacian operator}

Let $G$ be an Orlicz function and $0<s<1$  a fractional parameter. In order to introduce our fractional operators, we first need to define the {\em fractional divergence}.

To this end, consider $\phi = \phi(x,y)$ a measurable function defined on $\R^N\times\R^N$ and denote as its $s-$divergence the integral in the sense of {\em principal value}
\begin{equation}\label{sdiv}\begin{split}
\diver^s\phi (x) &= \text{p.v.} \int_{\R^N} (\phi(y,x)-\phi(x,y))\, \frac{dy}{|x-y|^{N+s}} \\
&= \lim_{\ve\downarrow 0} \int_{|h|>\ve} (\phi(x+h,x)-\phi(x,x+h))\, \frac{dh}{|h|^{N+s}}.
\end{split}
\end{equation}
We first need to make precise the sense in which the limit in \eqref{sdiv} exists.

For $\ve>0$, let us denote
$$
\diver^s_\ve\phi (x) := \int_{|h|>\ve} (\phi(x+h,x)-\phi(x,x+h))\, \frac{dh}{|h|^{N+s}}.
$$

First, a preliminary lemma.
\begin{lema}\label{divep}
Let $G$ be an Orlicz function and $\phi=\phi(x,y)\in L^G(\R^N\times\R^N, d\mu)$. Then $\diver^s_\ve \phi\in L^G(\R^N)$. Moreover,
$$
\int_{\R^N} G(\diver^s_\ve \phi)\, dx \le C_\ve \iint_{\R^N\times\R^N} G(\phi)\, d\mu.
$$
\end{lema}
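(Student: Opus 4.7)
The plan is to prove the modular estimate by a three–step procedure: a pointwise Jensen inequality, the $\Delta_2$ condition on $G$ to absorb a normalization constant, and finally Fubini combined with the $x\leftrightarrow y$ symmetry of the measure $d\mu$.

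First, I would compute the total mass of the kernel on $\{|h|>\ve\}$, namely
$$
M_\ve := \int_{|h|>\ve}\frac{dh}{|h|^{N+s}} = \frac{N\omega_N}{s\,\ve^s},
$$
which is finite because $s>0$. The probability measure $\tfrac{1}{M_\ve}\mathbf{1}_{\{|h|>\ve\}}|h|^{-(N+s)}dh$ together with convexity and evenness of $G$ then yields, by Jensen applied to $h\mapsto \phi(x+h,x)-\phi(x,x+h)$,
$$
G\!\left(\frac{\diver^s_\ve\phi(x)}{M_\ve}\right)
\le \frac{1}{M_\ve}\int_{|h|>\ve} G\bigl(\phi(x+h,x)-\phi(x,x+h)\bigr)\,\frac{dh}{|h|^{N+s}}.
$$

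Next I would remove the factor $1/M_\ve$ on the left-hand side using the $\Delta_2$ condition \eqref{G1}: iterating $G(2t)\le \C G(t)$ a total of $k:=\lceil\log_2 M_\ve\rceil$ times gives $G(M_\ve t)\le \C^{k}G(t)$, so
$$
G(\diver^s_\ve\phi(x))\le \C^{k}\,G\!\left(\frac{\diver^s_\ve\phi(x)}{M_\ve}\right).
$$
On the right-hand side I would use evenness of $G$ together with $G(a+b)\le G(2\max\{|a|,|b|\})\le \C(G(a)+G(b))$ to split
$$
G\bigl(\phi(x+h,x)-\phi(x,x+h)\bigr)\le \C\bigl[G(\phi(x+h,x))+G(\phi(x,x+h))\bigr].
$$

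Finally, integrating over $x\in\R^N$ and using Fubini, for the first piece I would substitute $y=x+h$ to obtain
$$
\int_{\R^N}\!\!\int_{|h|>\ve}G(\phi(x+h,x))\,\frac{dh\,dx}{|h|^{N+s}}
=\iint_{|x-y|>\ve}G(\phi(y,x))\,\frac{dx\,dy}{|x-y|^{N+s}}
\le \frac{1}{\ve^s}\iint_{\R^N\times\R^N}G(\phi(y,x))\,d\mu,
$$
and then the symmetry of $d\mu$ under swapping $x\leftrightarrow y$ turns this into $\tfrac{1}{\ve^s}\iint G(\phi)\,d\mu$. The second piece is handled identically (without needing the substitution). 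Combining all these bounds produces exactly the claimed inequality with an explicit constant $C_\ve$ of the form $\tfrac{2\C^{k+1}}{\ve^{s}M_\ve}$; in particular $C_\ve<\infty$ for each fixed $\ve>0$, and membership $\diver^s_\ve\phi\in L^G(\R^N)$ follows from the hypothesis $\phi\in L^G(\R^N\times\R^N,d\mu)$.

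The only technical point I foresee is the compatibility between Jensen (which naturally produces $G$ evaluated at an average, i.e.\ at the un-normalized quantity divided by $M_\ve$) and the target inequality (which asks for $G$ evaluated at $\diver^s_\ve\phi$ itself). That mismatch is exactly what the $\Delta_2$ condition is designed to absorb, which is why the constant is allowed to depend on $\ve$; the rest of the argument is standard manipulation.
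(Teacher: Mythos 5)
Your proposal is correct and follows essentially the same route as the paper: Jensen's inequality for the convex function $G$ against the normalized kernel on $\{|h|>\ve\}$, then the $\Delta_2$ condition to absorb both the normalizing constant $M_\ve$ and the splitting of the difference $\phi(x+h,x)-\phi(x,x+h)$, followed by Fubini and the $x\leftrightarrow y$ symmetry of $d\mu$. The paper simply states these steps more tersely (applying Jensen to a single term and remarking that ``$\Delta_2$ concludes the proof''), whereas you have carried out the same bookkeeping explicitly and tracked the constant.
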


\begin{proof}
The proof is a consequence of Jensen's inequality. In fact,
\begin{align*}
G\left(\int_{|h|>\ve} \phi(x,x+h) |h|^{-N-s}\, dh\right) &\le C_\ve \int_{|h|>\ve} G(\phi(x,x+h)) |h|^{-N-s}\, dh\\
&\le \frac{C_\ve}{\ve^s}\int_{|h|>\ve} G(\phi(x,x+h)) |h|^{-N}\, dh.
\end{align*}
Integrating this estimate on $\R^N$, together with the $\Delta_2$ condition concludes the proof.
\end{proof}

Now we are ready to prove that the fractional divergence $\diver^s$, is well defined.
\begin{prop}\label{divers}
Let $G$ be an Orlicz function and $G^*$ be its complementary function.  Then,
$$
\diver^s\colon L^{G^*}(\R^N\times\R^N,d\mu)\to W^{-s, G^*}(\R^N),
$$
where $W^{-s,G^*}(\R^N)$ stands for the (topological) dual space of $W^{s,G}(\R^N)$. Furthermore, $\diver^s$ is bounded and the integration by parts formula holds, i.e.
\begin{equation}\label{partes}
\langle \diver^s\phi, u\rangle = -\iint_{\R^N\times\R^N} \phi D^s u\, d\mu,
\end{equation}
for every $\phi\in L^{G^*}(\R^N\times\R^N, d\mu)$ and every $u\in W^{s,G}(\R^N)$.
\end{prop}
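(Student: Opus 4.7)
The strategy is to \emph{define} $\diver^s\phi$ as a bounded linear functional on $W^{s,G}(\R^N)$ by declaring the right-hand side of \eqref{partes} to be its value, and then to check that this agrees with the pointwise principal value construction in \eqref{sdiv} whenever both make sense (e.g.\ after approximating by $\diver^s_\ve$). The Young/H\"older machinery of the preceding subsection plus a Fubini argument after a suitable change of variables will do all the work.

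First I would establish that the bilinear form
$$
B(\phi,u) := \iint_{\R^N\times\R^N} \phi(x,y)\, D^s u(x,y)\, d\mu
$$
is well defined and bounded on $L^{G^*}(\R^N\times\R^N,d\mu)\times W^{s,G}(\R^N)$. Since $D^s u\in L^G(\R^N\times\R^N,d\mu)$ by definition of $W^{s,G}$, the H\"older-type inequality \eqref{HolderG} applied on the measure space $(\R^N\times\R^N,d\mu)$ gives
$$
|B(\phi,u)|\le 2\,\|\phi\|_{L^{G^*}(d\mu)}\,\|D^s u\|_{L^G(d\mu)}\le 2\,\|\phi\|_{L^{G^*}(d\mu)}\,[u]_{s,G}\le 2\,\|\phi\|_{L^{G^*}(d\mu)}\,\|u\|_{s,G}.
$$
Hence $u\mapsto -B(\phi,u)$ is a continuous linear functional on $W^{s,G}(\R^N)$ with operator norm bounded by $2\|\phi\|_{L^{G^*}(d\mu)}$; I would adopt this functional as the definition of $\diver^s\phi\in W^{-s,G^*}(\R^N)$. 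Boundedness of the map $\phi\mapsto \diver^s\phi$ is then immediate.

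Next I would reconcile this definition with the principal value \eqref{sdiv}. For $\phi\in L^{G^*}(d\mu)$, $u\in W^{s,G}(\R^N)$ and $\ve>0$, Lemma~\ref{divep} (applied to $G^*$, which is admissible by Remark~\ref{reflexivo}) gives $\diver^s_\ve\phi\in L^{G^*}(\R^N)$, so the pairing $\int_{\R^N}\diver^s_\ve\phi\cdot u\,dx$ is absolutely convergent by another application of \eqref{HolderG}. I then unfold the definition and apply Fubini to get
$$
\int_{\R^N}\diver^s_\ve\phi(x)\,u(x)\,dx=\iint_{|h|>\ve}\bigl[\phi(x+h,x)-\phi(x,x+h)\bigr]u(x)\,\frac{dh\,dx}{|h|^{N+s}},
$$
perform the change of variables $y=x+h$ in both terms and relabel $x\leftrightarrow y$ in one of them; after accounting for the symmetry $|x-y|=|y-x|$ the result simplifies to
$$
\int_{\R^N}\diver^s_\ve\phi\cdot u\,dx=-\iint_{|x-y|>\ve}\phi(x,y)\,D^s u(x,y)\,d\mu.
$$
Since $|\phi\,D^s u|\in L^1(d\mu)$ by H\"older, Dominated Convergence lets me send $\ve\downarrow 0$ on the right, producing $-B(\phi,u)$, which is exactly $\langle\diver^s\phi,u\rangle$ by our definition. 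This proves both the formula \eqref{partes} and the existence of the limit in \eqref{sdiv} in the distributional sense against test functions in $W^{s,G}(\R^N)$.

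The main technical obstacle is the Fubini/change-of-variables step: one has to justify swapping the order of integration despite the fact that the inner integrand is only conditionally integrable near the diagonal (hence the cutoff $|h|>\ve$), and then check that relabeling variables produces the antisymmetric combination $u(y)-u(x)$ needed to recover $D^s u$. Once this identity is in place at the $\ve$-level, the rest is a clean passage to the limit governed by the H\"older estimate.
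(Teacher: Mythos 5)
Your proof is correct and follows essentially the same route as the paper: both hinge on Lemma~\ref{divep}, the same Fubini/change-of-variables manipulation using the antisymmetry $|x-y|=|y-x|$ to produce $D^s u$, the observation that $\phi\,D^s u\in L^1(d\mu)$ via Young/H\"older, and dominated convergence to remove the $\ve$-cutoff. The only difference is organizational: you adopt the right-hand side of \eqref{partes} as the \emph{definition} of the functional, prove boundedness first, and then reconcile with the principal value, whereas the paper proceeds directly to the computation and leaves boundedness implicit — your ordering is slightly cleaner logically, but mathematically identical.
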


\begin{proof}
First, observe that, by Lemma \ref{divep},
$$
\langle \diver^s\phi, u\rangle = \lim_{\ve\to0}\langle \diver_\ve^s\phi, u\rangle =\lim_{\ve\to0} \int_{\R^N} \diver_\ve^s \phi(x) u(x)\, dx.
$$
Moreover,
\begin{align*}
\int_{\R^N} \diver_\ve^s \phi(x) u(x)\, dx &= \int_{\R^N}\int_{|x-y|>\ve} (\phi(y,x)-\phi(x,y)) u(x)\frac{dxdy}{|x-y|^{N+s}}\\
& = \int_{\R^N}\int_{|x-y|>\ve} (\phi(x,y)-\phi(y,x)) u(y)\frac{dxdy}{|x-y|^{N+s}}.
\end{align*}
Therefore
$$
\int_{\R^N} \diver^s_\ve\phi(x) u(x)\, dx = -\int_{\R^N}\int_{|x-y|>\ve} \phi(x,y) D^s u(x,y) d\mu(x,y).
$$
Since $\phi D^s u\in L^1(\R^N\times\R^N, d\mu)$,  the dominated convergence theorem enables to pass to the limit as $\ve\downarrow 0$ and the result follows.
\end{proof}

\begin{rem}
According to these notations, the fractional Laplacian, up to some normalization constant, can be written as
$$
(-\Delta)^s u(x) = -\diver^s (D^s u) (x).
$$
Moreover, in these terms the fractional $p-$Laplacian reads as
\begin{align*}
(-\Delta_p)^s u(x) &:= 2 \text{p.v}. \int_{\R^N} \frac{|u(x)-u(y)|^{p-2}(u(x)-u(y))}{|x-y|^{N+sp}}\, dy\\
& = -\diver^s(|D^s u|^{p-2}D^s u) (x).
\end{align*}
\end{rem}

Now, let $G$ be an Orlicz function and denote $g=G'$. We define the fractional $g-$laplacian operator, $(-\Delta_g)^s$ as
\begin{align} \label{vp}
(-\Delta_g)^s u(x):&= -\diver^s(g(D^s u))(x) \\
&= 2 \text{p.v.} \intr  g\left(\frac{u(x)-u(y)}{|x-y|^s}\right)\frac{dy}{|x-y|^{N+s}}.
\end{align}

Observe that, in view of Proposition \ref{divers}, the operator
$$
(-\Delta_g)^s\colon W^{s,G}(\R^N)\to W^{-s,G^*}(\R^N)
$$
is  continuous. In addition, by the integration by parts formula \eqref{partes},
$$
\langle (-\Delta_g)^s u,v \rangle =  \iint_{\R^N\times\R^N} g(D^s u)  D^s v\, d\mu,
$$
for any $v\in W^{s,G}(\R^N)$.

In order to introduce in the next Section the concept of viscosity solutions, it is necessary to give a point-wise sense to our operator. This sense will be ensured according to the value of the parameter $p^-$.

Specifically, if $u\in C^1(\R^N)\cap L^\infty(\R^N)$, then we have to require that $p^->\frac{1}{1-s}$, which could be large if $s$ is close to one.  However, since in this work the values $p^-_n$ diverge eventually to infinity, this assumption is not restrictive and it is enough for our purposes.

Nevertheless, we also show that  whenever $u\in C^2(\R^N)\cap L^\infty(\R^N)$, then it suffices with $g'$ locally bounded to ensure that  \eqref{vp} holds for every $x\in\R^N$.  This fact is guaranteed, for instance, by \eqref{condlib} and the assumption $p^->2$, which is a condition independent of $s$, though based on requiring more regularity on $u$.

This is the core of the following lemma.

\begin{lema}\label{pointwise}
Let $0<s<1$. Assume either
\begin{itemize}
\item $p^->\frac{1}{1-s}$ and $u\in C^1(\R^N)\cap L^\infty(\R^N)$ or
\item $g'$ is locally bounded and $u\in C^2(\R^N)\cap L^\infty(\R^N)$.
\end{itemize}
Then  \eqref{vp} is well defined pointwise for every $x\in\R^N$.
\end{lema}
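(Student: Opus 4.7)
The plan is to split the defining integral of $(-\Delta_g)^s u(x)$ into a near-diagonal part $\{|x-y|\le 1\}$ and a far part $\{|x-y|\ge 1\}$, and to treat them separately. The far part is absolutely integrable in both cases using only $u\in L^\infty(\R^N)$: since $|D^s u(x,y)|\le 2\|u\|_\infty/|x-y|^s\to 0$ as $|x-y|\to\infty$, on an annulus $\{1\le|x-y|\le R_0\}$ the argument of $g$ lies in a bounded set so $|g(D^s u)|\le C$ by continuity, while on $\{|x-y|>R_0\}$ the polynomial bound \eqref{cotag2} yields $|g(D^s u(x,y))|\le p^+|D^s u|^{p^--1}\le C|x-y|^{-s(p^--1)}$; in both subregions, integration against $|x-y|^{-N-s}\,dy$ gives a finite integral.

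For the near part in case~1, I would invoke the $C^1$ regularity: the first-order Taylor expansion gives $|D^s u(x,y)|\le\|\nabla u\|_{L^\infty(B_1(x))}|x-y|^{1-s}$, bounded on $\{|x-y|\le 1\}$. A further application of \eqref{cotag2} bounds the integrand by $C|x-y|^{(1-s)(p^--1)-s-N}$, and the assumption $p^->\tfrac{1}{1-s}$ is precisely the condition under which this exponent exceeds $-N$, yielding absolute integrability. In this case the principal value is therefore just an ordinary convergent integral.

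For the near part in case~2 the above bound is too crude, since $(1-s)(p^--1)-s$ may fall below zero, and one must exploit the oddness of $g$ together with the invariance of $dh/|h|^{N+s}$ under $h\mapsto -h$. Writing $y=x\pm h$ and using a second-order Taylor expansion of $u$ yields
\[
D^s u(x,x\pm h)=\mp A(h)+r_\pm(h),\qquad A(h):=\frac{\nabla u(x)\cdot h}{|h|^s},\qquad |r_\pm(h)|\le C|h|^{2-s}.
\]
Pairing the contributions from $\pm h$ and using $g(-A)=-g(A)$ together with the mean value theorem, one obtains
\[
|g(D^s u(x,x+h))+g(D^s u(x,x-h))|\le C|h|^{2-s},
\]
where $C$ involves $\sup|g'|$ over a bounded interval controlled by $\|\nabla u(x)\|$ (using $|A(h)|\le\|\nabla u(x)\|$ on $|h|\le 1$) --- this is the point at which local boundedness of $g'$ enters. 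Folding the truncated integral $\int_{\ve<|h|<1}g(D^s u(x,x+h))\,dh/|h|^{N+s}$ under $h\mapsto -h$ replaces the integrand by one-half of the absolutely integrable expression of order $|h|^{2-2s-N}$, which is integrable near zero since $s<1$; dominated convergence as $\ve\downarrow 0$ then delivers the principal value.

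The main obstacle is the last paragraph: the symmetric cancellation must be made precise, and one has to track carefully that $g'$ is only required to be bounded on a bounded interval determined by $u$ and not globally --- this is the entire content of the trade-off in the statement between imposing more regularity on $u$ (passing from $C^1$ to $C^2$) and weakening the hypothesis on $p^-$.
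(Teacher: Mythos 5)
Your proof is correct and follows essentially the same route as the paper: split the integral at $|x-y|=1$, control the far part using only $u\in L^\infty$, and handle the near part via the $C^1$ Lipschitz bound $|D^su(x,y)|\le L|x-y|^{1-s}$ combined with the polynomial growth of $g$ in case 1, and via the $h\mapsto -h$ symmetrization exploiting oddness of $g$, a second-order Taylor expansion (so the second difference is $O(|h|^2)$), and local boundedness of $g'$ in case 2. Two small presentational notes: the paper bounds the far part more simply by monotonicity of $g$ together with $|D^su|\le 2\|u\|_\infty$ on $\{|x-y|>1\}$, whereas you split further into an annulus plus a tail; and in case 1, if $L>1$ the inequality \eqref{cotag2} only applies once $L|x-y|^{1-s}<1$ --- the paper sidesteps this by invoking \eqref{estimareg} with $a=L$, while your version would need to absorb the small remaining annulus into a bounded contribution (which is trivial but should be said).
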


\begin{proof}
We decompose definition \eqref{vp} into
\begin{align*}
(-\Delta_g)^s u :=& \int_{|x-y|>1} g(D^s u(x,y))\frac{dy}{|x-y|^{N+s}}\\
&+ \lim_{\varepsilon\to0}\int_{\ve<|x-y|\le 1}  g(D^s u(x,y))\frac{dy}{|x-y|^{N+s}}\\
= & \mathcal I_1+\lim_{\varepsilon\to0}\mathcal I_2^\ve.
\end{align*}
To bound $\mathcal I_1$, observe that
$$
|D^s u(x,y)|\le 2\|u\|_\infty,
$$
whenever $|x-y|>1$. Taking into account that $g$ is nondecreasing, we obtain
$$
\mathcal I_1 \le g(2\|u\|_\infty) \int_{|x-y|>1} |x-y|^{-N-s}dy = \frac{N\omega_N}{s} g(2\|u\|_\infty),
$$
where $\omega_N$ denotes the measure of the unit ball in $\R^N$.

Suppose first that $u\in C^1(\R^N)\cap L^\infty(\R^N)$. It holds that
\begin{equation}\label{cotaDs}
|D^s u(x,y)|\le L |x-y|^{1-s},
\end{equation}
where $L = \|\nabla u\|_{L^\infty(B_1(x))}$. Hence, as long as $|x-y|\le 1$, with the use of \eqref{cotaDs} and \eqref{estimareg} we can proceed with $\mathcal I_2^\ve$ as follows,
\begin{align*}
\int_{\ve<|x-y|\le 1}  g(D^s u(x,y))\frac{dy}{|x-y|^{N+s}}&\leq \int_{\ve<|x-y|\le 1} \frac{g(L|x-y|^{1-s})}{|x-y|^{N+s}}\,dy \\
&= N \omega_N  \int_0^1 \frac{g\left(Lr^{1-s}\right)}{r^{s+1}}\,dy\\
&\leq N \omega_N \beta g(L) \int_0^1 \frac{r^{(1-s)(p^- -1)}}{r^{s+1}}\,dy\\
&\leq N \omega_N \beta g(L)\int_0^1 r^{(1-s)p^- -2}\,dy.
\end{align*}

The last quantity is finite whenever $p^->\frac{1}{1-s}$. The first statement is now completed.

To show the second statement it just remains to see the boundedness of $\mathcal I_2^\ve$ assuming that $u\in C^2(\R^N)\cap L^\infty(\R^N)$ and $g'$ is locally bounded. Notice that
$$
\mathcal I_2^\ve = \int_{\ve<|z|\le 1} g(D^s u (x,x+z))\frac{dz}{|z|^{N+s}} =  \int_{\ve<|z|\le 1} g(D^s u(x,x-z))\frac{dz}{|z|^{N+s}}.
$$
Therefore, we get
\begin{equation}\label{eq.intermedia}
\mathcal I_2^\ve = \frac12 \int_{\ve<|z|\le 1} \left(g(D^s u (x,x+z)) - g(D^s u(x-z,x))\right) \frac{dz}{|z|^{N+s}}.
\end{equation}
According to the function
$$
\varphi(t) := g(tD^s u(x,x+z) + (1-t) D^s u(x-z,x)),
$$
equation \eqref{eq.intermedia} reads as
\begin{equation}\label{I2ve}
\mathcal I_2^\ve=\frac12\int_{\ve<|z|\le 1} \frac{\varphi(1) - \varphi(0)}{|z|^{N+s}}\, dz = \frac12\int_{\ve<|z|\le 1}\int_0^1 \varphi'(t)\, dt\, |z|^{-(N+s)}\, dz.
\end{equation}
Recall that
$$
\varphi'(t) = g'(tD^s u(x,x+z) + (1-t) D^s u(x-z,x)) \frac{D^2_z u(x)}{|z|^s},
$$
where
$$
D^2_z u(x) =- (u(x+z) - 2u(x) + u(x-z)).
$$
Taking into account that  $g'$ is locally bounded and  \eqref{cotaDs}, we deduce, for $|z|\le 1$ and $0\le t\le 1$, that
$$
|g'(tD^s u(x,x+z) + (1-t) D^s u(x-z,x))|\le C.
$$
The fact that $u\in C^2(\R^N)$ implies that
$$
|D^2_z u(x)|\le C |z|^2,
$$
for $|z|\le 1$.

Assembling  all these bounds together, we conclude that the integrand in \eqref{I2ve} is bounded by $C |z|^{-(N+2s-2)}\in L^1(B_1)$. Hence the limit exists and the proof of the second statement follows.
\end{proof}

%%%%%%%%%%%%%%%%%%
%%%%%%%%%%%%%%%%%%
%% WEAK AND VISCOSITY
%%%%%%%%%%%%%%%%%%
%%%%%%%%%%%%%%%%%%

\section{Weak and viscosity solutions}
We devote this section to specify the different concepts of solutions that will be treated throughout this work. Weak solutions will be considered for a fixed value of $n\in\mathbb N$. However, since the objective is the asymptotic analysis of those solutions as $n\to\infty$, we need to introduce the notion of viscosity solutions. Actually, for $n$ sufficiently large, weak solutions are indeed viscosity solutions.

For the sake of simplicity we drop off the subscript $n$ along this section, as long as the definitions are given for $n$ fixed.

Given a bounded open set $\Omega\subset \R^N$ and $f\in L^{G^*}(\Omega)$, we first  provide a notion of weak solution for the following Dirichlet type equation
\begin{align} \label{ec.g}
\begin{cases}
(-\Delta_g)^s u= f &\quad \text{ in } \Omega\\
u=0 &\quad \text{ on } \R^N \setminus \Omega.
\end{cases}
\end{align}

\begin{defn}
We say that $u\in W^{s,G}_0(\Omega)$ is a weak supersolution (subsolution) to \eqref{ec.g} if
\begin{equation} \label{debil}
\langle (-\Delta_g)^s u, v\rangle \geq (\leq) \int_\Omega fv \,dx
\end{equation}
for all nonnegative $v\in W^{s,G}_0(\Omega)$.

If $u$ is a weak super and subsolution, then we say that $u$ is a weak solution to \eqref{ec.g}.
\end{defn}

\begin{rem}
Observe that  $C^\infty_c(\Omega)\subset W^{s,G}_0(\Omega)$, therefore a weak solution of \eqref{ec.g} is a solution in the sense of distributions.
\end{rem}

We are ready now to establish the notion of viscosity solutions to our problem. To this end we will assume that $p^->1/(1-s)$ in order to have the operator $(-\Delta_g)^s$ well defined pointwise for test functions (see Lemma \ref{pointwise}).
\begin{defn} An upper semicontinuous function $u$ such that $u\le 0$ in $\R^N\setminus\Omega$ is a  \emph{viscosity subsolution} to \eqref{ec.g} if
whenever $x_0\in\Omega $ and $\varphi\in C^1_c(\R^N)$
are such that
\begin{itemize}
\item[i)] $\varphi(x_0)=u(x_0)$
\item[ii)] $u(x)\le \varphi(x)$ for $x\ne x_0$
\end{itemize}
 then
 $$
 (-\Delta_g)^s \varphi(x_0)\leq f(x_0).
 $$

\end{defn}
\begin{defn} A lower semicontinuous function $u$ such that $u\ge 0$ in $\R^N\setminus\Omega$ is a  \emph{viscosity supersolution} to \eqref{ec.g} if whenever $x_0\in\Omega $ and $\phi\in C^1_c(\R^N)$
are such that
\begin{itemize}
\item[i)] $u(x_0)=\phi(x_0)$
\item[ii)] $u(x)\ge \phi(x)$ for all $x\neq x_0$,
\end{itemize}
 then
 $$
(-\Delta_g)^s   \phi(x_0)\geq f(x_0).
 $$
\end{defn}
\begin{defn} A continuous function $u$  is a viscosity solution to \eqref{ec.g} if it is a viscosity supersolution and a viscosity subsolution.
\end{defn}

\begin{rem}\label{rem.cte} Mind that $(-\Delta_g)^s  (\phi+C)=(-\Delta_g)^s \phi$ hence the previous definitions are equivalent if  the function $\phi(x)+C$ (or $\phi(x)-C$) touches $u$ from below  (from above, respectively) at $x_0$.

Furthermore, in the previous definitions we may assume that the test function touches $u$ strictly. Indeed, for a test function $\phi$ touching $u$ from below, consider the function $h(x)=\phi(x)- \eta(x)$, where $\eta\in C^\infty_c(\R^N)$ satisfies $\eta(x_0)=0$ and $\eta(x)>0$ for $x\ne x_0$. Notice that $h$ touches $u$ strictly. Moreover, since the function $g$ is increasing it holds that $(-\Delta_g)^s h(x_0)\ge (-\Delta_g)^s \phi(x_0)$.
\end{rem}

For further details about general theory of viscosity solutions we refer to \cite{CIL}, and \cite{J}, \cite{JLM2} for viscosity solutions related to the (local) $\infty-$Laplacian and the $p-$Laplacian  operators.  Regarding  the approach of viscosity solutions  to  equations related to the fractional $p$-Laplacian, see for instance \cite{CLM},\cite{FPL} and \cite{LL}.

Our goal now is to prove that  continuous weak solutions to \eqref{ec.g} are also viscosity solutions to \eqref{ec.g}. We follow the approach given in \cite{FPL}, see also \cite{LL}.

\begin{lema}\label{debilesvisco} Let $G$ be an Orlicz function satisfying \eqref{condlib}. Moreover, assume that $p^->\frac{1}{1-s}$. Then, if $u\in W^{s,G}_0(\Omega)$  is a continuous weak solution to \eqref{ec.g} then, $u$ is also a viscosity solution.
\end{lema}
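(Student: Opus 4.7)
The plan is to argue by contradiction, treating only the subsolution direction, the supersolution case being entirely symmetric. Assume $u$ is a continuous weak solution of \eqref{ec.g} that fails to be a viscosity subsolution: by definition there exist $x_0\in\Omega$ and $\varphi\in C^1_c(\R^N)$ with $\varphi(x_0)=u(x_0)$, $\varphi\geq u$ on $\R^N$ and $(-\Delta_g)^s\varphi(x_0)>f(x_0)$; the pointwise value is well defined by Lemma \ref{pointwise} since $p^->\tfrac{1}{1-s}$. By Remark \ref{rem.cte}, after subtracting a smooth bump vanishing only at $x_0$ the touching can be assumed strict, i.e.\ $\varphi>u$ on $\R^N\setminus\{x_0\}$. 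For the very last step we shall also assume that $x_0$ is a Lebesgue point of $f$.

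The central device is a carefully designed admissible test function. Pick $r>0$ so small that $\overline{B_r(x_0)}\subset\Omega$ and set
\[
m_r:=\min_{\partial B_r(x_0)}(\varphi-u)>0,
\]
positive by the strict touching and compactness. Define
\[
\psi_r(x):=\begin{cases}\min\{\varphi(x)-m_r/2,\,u(x)\}&\text{if }x\in B_r(x_0),\\ u(x)&\text{if }x\in\R^N\setminus B_r(x_0).\end{cases}
\]
On $\partial B_r(x_0)$ one has $\varphi-m_r/2\geq u+m_r/2>u$, so $\psi_r$ is continuous on $\R^N$, satisfies $\psi_r\leq u$ everywhere, agrees with $u$ outside $B_r(x_0)$, and $w:=u-\psi_r\geq 0$ is continuous, compactly supported strictly inside $B_r(x_0)$, with $w(x_0)=m_r/2>0$. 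The lattice property of Orlicz-Sobolev spaces yields $w\in W^{s,G}_0(\Omega)$.

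Testing \eqref{debil} against $w$ and combining with the elementary monotonicity $(g(a)-g(b))(a-b)\geq 0$ applied to $a=D^s u$, $b=D^s\psi_r$ (so $a-b=D^s w$) gives
\[
\int_\Omega f\,w\,dx=\iint_{\R^N\times\R^N}g(D^s u)\,D^s w\,d\mu\geq\iint_{\R^N\times\R^N}g(D^s\psi_r)\,D^s w\,d\mu.
\]
Proposition \ref{divers} identifies the last term with $\int_{\R^N}(-\Delta_g)^s\psi_r\cdot w\,dx$, so the contradiction is reduced to showing
\[
\int(-\Delta_g)^s\psi_r\,w\,dx>\int f\,w\,dx\qquad\text{for all sufficiently small }r>0.
\]

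This last inequality is the core of the argument. On $\supp(w)$ one has $\psi_r=\varphi-m_r/2$ pointwise, so splitting the principal value defining $(-\Delta_g)^s\psi_r(x)$ into a near-diagonal part $\{|y-x|<\rho\}$ (where $\psi_r$ differs from $\varphi$ only by a constant and gives exactly the same integrand as in $(-\Delta_g)^s\varphi$) and a tail $\{|y-x|\geq\rho\}$ (where both $\psi_r$ and $\varphi$ are bounded and $g$ is locally Lipschitz by \eqref{estaSI}), using the growth estimates of Lemma \ref{poliG}, one shows
\[
(-\Delta_g)^s\psi_r(x)\longrightarrow(-\Delta_g)^s\varphi(x_0)\qquad\text{uniformly for }x\in\supp(w)\text{ as }r\to 0.
\]
Combined with the strict inequality $(-\Delta_g)^s\varphi(x_0)>f(x_0)$ and the Lebesgue point property of $f$ at $x_0$ (which gives $\int fw\,dx=f(x_0)\int w\,dx+o(\int w\,dx)$), the desired strict integrated inequality and hence the contradiction follow. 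The main technical obstacle is precisely this uniform convergence of the nonlocal operator on the patched function $\psi_r$: the near-diagonal singularity is tamed through the $C^1$ regularity of $\varphi$ and the restriction $p^->\tfrac{1}{1-s}$ exactly as in the proof of Lemma \ref{pointwise}, while the tail must be controlled by exploiting the $\Delta_2$ growth of $g$ together with the fact that $\psi_r$ differs from $\varphi$ only by an $L^\infty$ perturbation of size $m_r\to 0$.
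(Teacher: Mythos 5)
Your outline is a genuinely different route from the paper's. The paper perturbs the test function itself, setting $\Phi_\ve=\phi+\ve h$ with $h\in C^1_c(B_r(x_0))$, proves the \emph{uniform} bound $|(-\Delta_g)^s\Phi_\ve-(-\Delta_g)^s\phi|\le C\ve$ on $B_r(x_0)$ using \eqref{estaSI}, and then applies the weak comparison principle (Lemma~\ref{compa}) to the pair $(\Phi_\ve,u)$ to obtain the contradiction $\Phi_\ve\le u$ while $\Phi_\ve(x_0)>u(x_0)$. You instead patch the test function with $u$, test the weak formulation against $w=u-\psi_r$, and try to localize. That patching scheme is a recognizable alternative, but as written it has two genuine gaps.

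First, $\psi_r=\min\{\varphi-m_r/2,\,u\}$ is only Lipschitz, not $C^1$, so Lemma~\ref{pointwise} does not give a pointwise meaning to $(-\Delta_g)^s\psi_r(x)$. Proposition~\ref{divers} only justifies the \emph{duality} pairing $\langle(-\Delta_g)^s\psi_r,w\rangle=\iint g(D^s\psi_r)D^sw\,d\mu$; the step identifying this with the Lebesgue integral $\int(-\Delta_g)^s\psi_r(x)\,w(x)\,dx$ (i.e.\ exchanging the principal value with the $x$-integration) is unjustified for a merely Lipschitz $\psi_r$. The paper avoids this by keeping the test function in $C^1_c$ and using comparison rather than a pointwise representation.

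Second, and more seriously, the claimed uniform convergence $(-\Delta_g)^s\psi_r(x)\to(-\Delta_g)^s\varphi(x_0)$ on $\supp(w)$ rests on the assertion that ``$\psi_r$ differs from $\varphi$ only by an $L^\infty$ perturbation of size $m_r\to0$.'' That is false: outside $B_r(x_0)$ you have $\psi_r=u$, and $\varphi-u$ is a \emph{fixed} strictly positive function there (since $\varphi$ touches $u$ strictly from above only at $x_0$); it does not shrink with $r$. So the tail part of the nonlocal integral is not small, and there is no equality in the limit. What does survive is a one-sided comparison (since $\psi_r\le\varphi$ globally with near-equality at $x\in\{w>0\}$, one expects $(-\Delta_g)^s\psi_r(x)\gtrsim(-\Delta_g)^s\varphi(x_0)$), but to make this rigorous you would need to control $m_r/r^s$, which requires a modulus of continuity for $u$ that the hypotheses do not give you. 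Finally, the extra Lebesgue-point assumption on $f$ is not part of the statement; the paper's argument works because, for the definition of viscosity solution to even make sense, $f$ is taken continuous, which is what is used to get ``$(-\Delta_g)^s\phi<f$ on a ball.'' In sum, the overall strategy is reasonable, but the uniform-convergence claim (and its justification) is the crux and it is not correct as stated; the paper's perturbation-plus-comparison route circumvents exactly this difficulty.
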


\begin{proof} We first prove that if $u$ is a continuous weak supersolution then, it is a viscosity supersolution. Arguing \textit{ad contrarium},  suppose that this is not the case. In other words, admit that there exists $\phi$ and  $x_0 \in \Omega$  such that $u(x_0)=\phi (x_0)$,  $u-\phi$ has a strict minimum at $x_0$, and $\phi$ verifies
$$
 (-\Delta_g)^s\phi(x_0)<f(x_0).
$$
Continuity ensures the existence of a radius $r>0$ for which indeed
$$
(-\Delta_g)^s \phi(x)<f(x)\quad \text{for all } x\in B(x_0,r) \subset \Omega.
$$

The test function is perturbed as follows. Declare $\Phi_\ve (x) = \phi(x) + \ve h(x)$ being $0<\ve<1$, $h\in C^1_c(\R^N)$ such that $h(x_0)>0$ and $h\equiv 0$ in $\R^N\setminus B(x_0,r)$.

The idea is now to see that the operator applied on this perturbation remains close in a neighbourhood of $x_0$  if $\ve$ is small, namely
\begin{equation}\label{eq.uniforme}
|(-\Delta_g)^s \Phi_\ve(x)-(-\Delta_g)^s \phi(x)|\le C\varepsilon\quad \text{for all } x\in B(x_0,r).
\end{equation}
In particular, for $\ve$ small
$$
(-\Delta_g)^s \Phi_\ve(x)<f(x)\quad \text{for all } x\in B(x_0,r).
$$

If we multiply by a non-negative continuous function $\psi\in W^{s,G}_0(\Omega)$ supported in $B(x_{0},r)$ and integrate, using the integration by parts formula \eqref{partes}, it yields
$$
\iint_{\R^N\times\R^N} g(D^s \Phi_\ve) D^s \psi \, d\mu < \int_{\R^N} f\psi \, dx.
$$

Having in mind that  $u$ is a weak supersolution, it gives
$$
\iint_{\R^N\times\R^N} g(D^s \Phi_\ve) D^s \psi \, d\mu< \iint_{\R^N\times\R^N} g(D^s u) D^s \psi \, d\mu.
$$
We recall that $\Phi_\varepsilon\le u$ in $\R^N\setminus B(x_0,r)$ and invoke the comparison principle in Lemma~\ref{compa} below, to conclude that $\Phi_\varepsilon\leq u$ in $B(x_0,r)$. But this contradicts that $\Phi_\varepsilon(x_0)>u(x_0)$. This shows  that $u$ is a viscosity supersolution.  The proof of the fact that $u$ is a viscosity subsolution runs similarly.

It remains to show the claim in \eqref{eq.uniforme}. For this purpose, we need to estimate the term
$$
|g(D^s \Phi_\ve) -g(D^s\phi))|.
$$
Notice that
\begin{align*}
g(D^s \Phi_\ve) -g(D^s\phi) &= \int_0^1 \frac{d}{dt}\left(g(D^s \phi - t(D^s \phi - D^s \Phi_\ve))\right)\, dt\\
&= \ve D^s h \int_0^1 g'(D^s \phi + t\ve D^s h)\, dt.
\end{align*}
As a result,
\begin{align*}
|(-\Delta_g)^s \Phi_\ve(x)&-(-\Delta_g)^s \phi(x)| \le 2\int_{\R^N} |g(D^s\Phi_\ve(x,y)) - g(D^s\phi(x,y))|\, \frac{dy}{|x-y|^{N+s}}\\
&\le 2\ve \int_{\R^N} |D^sh(x,y)| \int_0^1 |g'(D^s\phi(x,y) + t\ve D^s h(x,y))|\, dt\, \frac{dy}{|x-y|^{N+s}}.
\end{align*}
Hence \eqref{eq.uniforme} will be proved if we show that the last integral is bounded uniformly in $x\in B_r(x_0)$.

As usual, we split the integral in $|x-y|\le 1$ and $|x-y|>1$. Away from the origin, we have that $\|D^s\phi\|_\infty\le 2\|\phi\|_\infty$ and $\|D^s h\|_\infty\le 2\|h\|_\infty$. Therefore, since $g'$ is locally bounded we conclude that
\begin{align*}
\int_{|x-y|\ge 1} |D^s h(x,y)| &\int_0^1 |g'( D^s \phi(x,y) +t\ve D^s h(x,y))|\,dt\,\frac{dy}{|x-y|^{N+s}} \\
&\le C \int_{|x-y|\ge 1} \frac{dy}{|y-x|^{N+s}}\leq C.
\end{align*}
While close to zero, the Lipschitz continuity of $\phi$ and $h$, estimate \eqref{estaSI} and the fact that $g'$ is increasing, lead to
\begin{align*}
\int_{|x-y|<1} &|D^s h(x,y)| \int_0^1 g'(D^s\phi(x,y) + t\ve D^s h(x,y)) \,dt\, \frac{dy}{|x-y|^{N+s}} \\
&\le C \int_{|x-y|<1} \frac{1}{|x-y|^{2s+N-1}}g'(C|x-y|^{1-s})\,dy \\
&\le C \int_{|x-y|<1} |x-y|^{1-2s-N+(1-s)(p^--2)}dy\\
&= C \int_{|x-y|<1} |x-y|^{-1-N+(1-s)p^-}dy\leq C,
\end{align*}
whenever $p^->1/(1-s)$. The constant $C$ varies from line to line and depends on the Lipschitz constants for $h$ and $\phi$ and on $p^+$. The estimate \eqref{eq.uniforme} is proved.
\end{proof}

\begin{rem}
Observe that if, in addition $p^->\frac{N}{s}$, then we can take $q>1$ and $t<s$ such that $p^->q>\frac{N}{t}>\frac{N}{s}$. Now Proposition~\ref{prop.reg} ensures that $W_0^{s,G}(\Omega)\subset W_0^{t,q}(\Omega)\subset C^{0,\alpha}(\Omega)$. Accordingly, we can remove the continuity assumption in the previous Lemma.
\end{rem}

\begin{rem}
 We note that there is no need to require condition \eqref{condlib} except for the proof of Lemma \ref{debilesvisco}. Moreover, this condition is only used in the proof of \eqref{eq.uniforme}.
\end{rem}

It remains to show the Comparison Principle invoked in Lemma~\ref{debilesvisco}. The ideas in \cite{LL} apply to this more general setting. We include the proof for convenience of the reader.

\begin{lema}\label{compa}Let $\Omega\subset \R^N$ be a domain and  $u,v\in  W^{s,G}(\Omega)$  be two continuous functions satisfying
\begin{itemize}
\item[i)] $v\geq u$ in $\R^N\setminus \Omega$;
\item[ii)] for any non-negative continuous function $\psi \in W^{s,G}_0(\Omega)$
$$
\iint_{\R^N\times \R^N}g(D^s v) D^s\psi \, d\mu \geq \iint_{\R^N\times \R^N} g(D^s u) D^s\psi\, d\mu.
$$
\end{itemize}
Then  $v\geq u$ in $\R^N$.
\end{lema}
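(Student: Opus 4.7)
The plan is to test hypothesis (ii) against $\psi:=(u-v)_+$, use monotonicity of $g$ to force the resulting double integral to be pointwise non-positive while simultaneously non-negative by hypothesis, and then conclude via the Poincar\'e inequality of Corollary~\ref{poincare.norma}.

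First I would check that $\psi$ is an admissible test function. Its continuity is inherited from that of $u$ and $v$; hypothesis (i) gives $\psi\equiv 0$ on $\R^N\setminus\Omega$; and the $1$-Lipschitzness of the positive part yields $|D^s\psi(x,y)|\le |D^s(u-v)(x,y)|$ pointwise, so monotonicity of $G$ combined with the $\Delta_2$ condition forces $\Phi_{s,G}(\psi)\le C(\Phi_{s,G}(u)+\Phi_{s,G}(v))<\infty$ and analogously $\Phi_G(\psi)<\infty$, placing $\psi\in W^{s,G}_0(\Omega)$. Substituting into (ii) then gives
$$
\iint_{\R^N\times\R^N}\bigl(g(D^s v)-g(D^s u)\bigr)\, D^s\psi\, d\mu \ge 0.
$$
Next I would split $\R^N\times\R^N$ into the four regions determined by the signs of $u(x)-v(x)$ and $u(y)-v(y)$. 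Where $u\le v$ at both points, $D^s\psi\equiv 0$. Where $u>v$ at both points, $D^s\psi=D^s u-D^s v$, so monotonicity of $g$ makes the integrand $-(g(D^s u)-g(D^s v))(D^s u-D^s v)\le 0$. On each of the two mixed regions (say $u(x)>v(x)$ and $u(y)\le v(y)$), a direct computation shows that $D^s\psi$ and $D^s u - D^s v$ are strictly of the same sign with $|D^s u-D^s v|\ge |D^s\psi|>0$, so once again the integrand is $\le 0$.

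Combining the two bounds, the integrand vanishes $\mu$-a.e. Strict monotonicity of $g$ (a consequence of the growth condition \eqref{cond}, which excludes affine plateaus of $G$ on which the quotient $tg(t)/G(t)$ could remain bounded below by $p^->1$) then forces $D^s\psi=0$ $\mu$-a.e., hence $[\psi]_{s,G}=0$; Corollary~\ref{poincare.norma} gives $\|\psi\|_G\le C\d^s[\psi]_{s,G}=0$, and continuity of $\psi$ upgrades $\psi\equiv 0$ a.e.\ to pointwise, yielding $v\ge u$ on $\R^N$. I expect the main obstacle to be the careful justification of $\psi\in W^{s,G}_0(\Omega)$ together with the mixed-region sign analysis; beyond that the argument is formally identical to the classical fractional $p$-Laplace comparison principle as in \cite{LL,FPL}.
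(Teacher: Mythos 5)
Your proof follows the same global strategy as the paper's: insert $\psi=(u-v)_+=(v-u)^-$ into hypothesis (ii) and conclude that $D^s\psi$ vanishes $\mu$-a.e. The tactics in the middle and at the end differ. Where the paper factors $g(D^s v)-g(D^s u)=(D^s v-D^s u)\int_0^1 g'\,dt$ and exploits the pointwise algebraic estimate $D^s w\,D^s w^-\le -(D^s w^-)^2$ for $w=v-u$, you carry out a direct sign analysis over the four regions determined by the sign of $u-v$ at $x$ and at $y$; this is correct, and it has the minor advantage of using only monotonicity of $g$ rather than pointwise values of $g'$. To close, you invoke the fractional Poincar\'e inequality (Corollary~\ref{poincare.norma}), whereas the paper argues that $D^s w^-\equiv 0$ forces $w^-$ to be constant and then uses the boundary condition; both routes work. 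The one real flaw is the parenthetical claim that \eqref{cond} ``excludes affine plateaus of $G$.'' It does not: if $g\equiv c$ on $[a,b]$ with $a>0$, then there $tg(t)/G(t)=ct/\bigl(G(a)+c(t-a)\bigr)$ decreases from $ca/G(a)\ge 1$ toward $1$, so an unbounded plateau is ruled out when $p^->1$, but a bounded plateau on which the quotient stays $\ge p^-$ is perfectly compatible with \eqref{cond}. Strict monotonicity of $g$ (equivalently, strict convexity of $G$) is therefore a genuine additional hypothesis needed at your last step — one the paper also uses, asserting ``$g'(t)=0$ if and only if $t=0$'' without derivation — and it does not follow from the listed structural conditions.
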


\begin{proof} We rewrite hypothesis ii) above as follows
\begin{equation}\label{desigualdad.integral}
\begin{split}
0&\leq \iint_{\R^N\times \R^N}\left(g(D^s v) - g(D^s u)\right) D^s\psi\, d\mu\\
&= \iint_{\R^N\times \R^N}(D^s v - D^s u)\int_0^1 g'(D^s v - t(D^s v - D^s u))\, dt D^s \psi\, d\mu.
\end{split}
\end{equation}
Denote by $w=v-u$ and observe that $w\ge 0$ in $\R^N\setminus \Omega$ by hypothesis i). On the other hand,
\begin{equation}\label{Dsw-}
\begin{split}
D^s w(x,y) D^s w^-(x,y) &= -\frac{w(x)w^-(y) + w(y) w^-(x)}{|x-y|^s} - (D^s w^-(x,y))^2\\
&\le -(D^s w^-(x,y))^2\le 0.
\end{split}
\end{equation}
Moreover, from \eqref{Dsw-}, the fact that $D^sw D^s w^- = 0$, implies that $D^s w^- = 0$ and hence $w^-= const$. The boundary condition $w^- = 0$ in $\R^N\setminus \Omega$ leads to $w^-=0$.

Now, take $\psi= w^-$ in \eqref{desigualdad.integral} and obtain
$$
0\le \iint_{\R^N\times\R^N} D^s w D^s w^- \int_0^1 g'(D^s v - t(D^s v - D^s u))\, dt\, d\mu.
$$
Recall that $g'\ge 0$ and $g'(t)=0$ if and only if $t=0$. Then, from \eqref{Dsw-} we infer that $D^s w D^s w^- = 0$ $\mu-$a.e.

The proof is complete.
\end{proof}

%%%%%%%%%%%%%%%%%%%%%
%%%%%%%%%%%%%%%%%%%%%
%%% PASSAGE TO THE LIMIT
%%%%%%%%%%%%%%%%%%%%%
%%%%%%%%%%%%%%%%%%%%%

\section{Passage to the limit as $n\to\infty$}

This section encloses the main results regarding the asymptotic behavior of problem \eqref{ec.gn} as $n\to\infty$. First of all, we find a priori estimates ensuring the existence of such a limit.

We begin with an estimate of the Orlicz-Sobolev norm for solutions to \eqref{ec.g}.
\begin{prop} \label{prop.cota}
Let $G$ be an Orlicz function satisfying \eqref{cond}. Let $r\in [(p^-)',\infty]$ and $f\in L^r(\Omega)$.

Let $u\in W^{s,G}_0(\Omega)$ be the weak solution of  \eqref{ec.g}. Then, there exists a positive constant $C$ depending on $N, s, |\Omega|, \d(\Omega), \|f\|_r, r, p^-$ and $p^+$ such that
$$
[u]_{s,G} \leq C.
$$
\end{prop}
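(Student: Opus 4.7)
The plan is to test the weak formulation with the solution itself, exploit the growth condition on $G$ to recover the modular $\Phi_{s,G}(u)$ on the left-hand side, and then dominate the resulting duality pairing by an expression linear in $[u]_{s,G}$. Concretely, taking $v=u$ in the definition of weak solution and using the integration by parts formula from Proposition \ref{divers} gives
$$
\iint_{\R^N\times\R^N} g(D^s u)\,D^s u\, d\mu = \int_\Omega f u\, dx.
$$
Since $g$ is odd and $G$ is even, condition \eqref{cond} reads $p^- G(t)\le t g(t)\le p^+ G(t)$ for all $t\in\R$, so the left-hand side is bounded below by $p^-\Phi_{s,G}(u)$.

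For the right-hand side, I would apply the H\"older-type inequality \eqref{HolderG} (thinking of $f$ as an element of $L^{G^*}(\Omega)$), obtaining
$$
\int_\Omega f u\, dx \le 2\|f\|_{G^*}\|u\|_G.
$$
To dominate $\|f\|_{G^*}$ by $\|f\|_r$ I would apply Lemma \ref{lema.inclusion} to the complementary function $G^*$, whose Orlicz exponents are $(p^+)'$ and $(p^-)'$ by \eqref{cond*}; the hypothesis $r\ge(p^-)'$ is precisely what that lemma needs, yielding $\|f\|_{G^*}\le C(|\Omega|,p^\pm,r)\|f\|_r$. Next, I would invoke the Poincar\'e inequality of Corollary \ref{poincare.norma} to replace $\|u\|_G$ by $C\d^s[u]_{s,G}$.

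Collecting the previous estimates produces an inequality of the form $p^-\Phi_{s,G}(u)\le K\,[u]_{s,G}$, with $K$ a constant depending only on $N,s,|\Omega|,\d(\Omega),\|f\|_r,r,p^\pm$. The final task is to transfer the information from the modular to the seminorm: choosing $a=D^s u/[u]_{s,G}$ and $t=[u]_{s,G}$ in \eqref{estimare1} (and integrating against $d\mu$) yields
$$
\Phi_{s,G}(u) \ge \min\bigl\{[u]_{s,G}^{p^-},\,[u]_{s,G}^{p^+}\bigr\}\cdot\Phi_{s,G}\!\left(\frac{u}{[u]_{s,G}}\right) = \min\bigl\{[u]_{s,G}^{p^-},\,[u]_{s,G}^{p^+}\bigr\},
$$
using $\Phi_{s,G}(u/[u]_{s,G})=1$ which holds thanks to the $\Delta_2$ condition. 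Splitting into the cases $[u]_{s,G}\le 1$ (where the bound is immediate) and $[u]_{s,G}\ge 1$ (where one divides by $[u]_{s,G}$ and takes a $(p^--1)$-th root) gives the desired estimate.

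The only delicate point I foresee is keeping the bookkeeping of constants clean: verifying that Lemma \ref{lema.inclusion} really applies to $G^*$ (so that the exponent condition becomes $r\ge(p^-)'$ rather than $r\ge p^+$) and ensuring that the split between the two cases $[u]_{s,G}\lessgtr 1$ is what determines the explicit form of the final constant. Everything else is a direct assembly of previously established facts.
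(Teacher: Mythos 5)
Your proof is correct and follows essentially the same route as the paper: test with $v=u$, bound the left side below by $p^-\Phi_{s,G}(u)$ via \eqref{cond}, bound the right side by $2\|f\|_{G^*}\|u\|_G$ and then by $C\|f\|_r\,\d^s[u]_{s,G}$ via Lemma \ref{lema.inclusion} (applied to $G^*$) and Corollary \ref{poincare.norma}, and finally convert modular to seminorm using \eqref{estimare1}. The only cosmetic difference is that the paper assumes $[u]_{s,G}\ge 1$ from the outset rather than splitting into cases at the end.
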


\begin{proof}
We can assume that $[u]_{s,G}\geq 1$.  $u$ is a weak solution of \eqref{ec.g}, thus
$$
\langle (-\Delta_g)^s u, u\rangle = \int_\Omega fu  \,dx.
$$

Condition \eqref{cond} and the integration by parts formula \eqref{partes} yield
\begin{align}\label{izquierdo}
\begin{split}
\langle (-\Delta_g)^s u,u \rangle &=  \iint_{\R^N\times\R^N} g(D^s u)  D^s u \, d\mu\\
&\geq p^- \iint_{\R^N\times\R^N}  G(D^s u) \, d\mu\\
&= p^- \Phi_{s,G}(u) \ge p^- [u]_{s,G}^{p^-}.
\end{split}
\end{align}

On the  other hand,  Lemma \ref{lema.inclusion}, H\"older's inequality for Orlicz functions \eqref{HolderG} and Corollary \ref{poincare.norma} imply that
\begin{align*}
 \int_\Omega f u \, dx &\leq 2 \|f\|_{G^*} \|u\|_{G}\leq 2C(|\Omega|, p^-,p^+,r) \|f\|_r C\d^s   [u]_{s,G},
\end{align*}
where $C(|\Omega|, p^-,p^+,r)$ is the constant specified in Lemma \ref{lema.inclusion}, $C$ is the constant appearing in Corollary \ref{poincare.norma} and $\d=\d(\Omega)$ stands for the diameter of $\Omega$.

This last estimate finishes the proof of the proposition.
\end{proof}

Some remarks are in order:

\begin{rem}
Assume that  $\{G_n\}_{n\in\N}$ is  a sequence of Orlicz functions for which \eqref{cond} holds with $p_n^-\to\infty$ as $n\to\infty$. Furthermore, suppose that there exists $\beta>1$ such that $p^+_n\le \beta p^-_n$.

Then, the complementary sequence $\{G_n^*\}_{n\in\N}$ verifies \eqref{cond*}. Notice that $(p^-_n)'\to 1$ as $n\to\infty$. As a result if $f\in L^r(\Omega)$ for some $r>1$,  there exists $n_0\in\N$ such that $f\in L^{G^*_n}(\Omega)$ for every $n\ge n_0$.
\end{rem}

\begin{rem}\label{Cn1}
Fix $r>1$ and let $u_n\in W^{s,G_n}_0(\Omega)$ be the weak solution to \eqref{ec.gn}. From the explicit computation of the constants related to Lemma \ref{lema.inclusion} and Corollary \ref{poincare.norma} it is straightforward to check that the estimate in Proposition \ref{prop.cota}
$$
[u_n]_{s,G_n}\le C_n,
$$
verifies that $C_n\to 1$ as $n\to\infty$. In particular, $[u_n]_{s,G_n}$ is bounded independently of $n\in\N$.
\end{rem}

As a consequence of the bound in Proposition \ref{prop.cota} combined  with the Sobolev immersion, we can deduce uniform H\"older estimates for the weak solutions of \eqref{ec.g}. We exploit this fact in the next result.

\begin{prop}\label{cota}
Let $f\in L^r(\Omega)$ with $r>1$ and let $u_n$ be the corresponding weak solution to \eqref{ec.gn}. Then, there exists a subsequence $\{u_{n_k}\}_{k\in\N}\subset \{u_n\}_{n\in\N}$ and a function $u_\infty\in C^{0,s}(\R^N)$ such that $u_{n_k} \to u_\infty$ uniformly in $\R^N$ as $k\to\infty$. Moreover, $u_\infty=0$ in $\R^N\setminus \Omega$ and
$$
[u_\infty]_{C^{0,s}}:= \sup_{\substack{x,y\in\R^N \\ x\neq y}} \frac{|u_\infty(x)-u_\infty(y)|}{|x-y|^s} = \|D^s u_\infty\|_\infty \leq 1.
$$
\end{prop}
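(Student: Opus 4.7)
The plan follows the strategy in \cite{FPL} for the fractional $p$-Laplacian, adapted to the Orlicz setting through the intermediate embedding of Proposition~\ref{prop.reg}. The first step is to extract the subsequence via a uniform H\"older bound: combining Proposition~\ref{prop.cota} with Remark~\ref{Cn1} yields $[u_n]_{s,G_n}\le C_n$ with $\limsup_n C_n\le 1$, so the Gagliardo seminorms are uniformly bounded. Fixing auxiliary parameters $t_0\in(0,s)$ and $q_0>N/t_0$, then for $n$ large enough that $p_n^->q_0$, Proposition~\ref{prop.reg} together with the classical fractional Morrey embedding gives a uniform bound in $C^{0,\alpha_0}(\overline{\Omega})$ with $\alpha_0=t_0-N/q_0$. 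Since $u_n\equiv 0$ outside $\Omega$, the extension by zero is uniformly H\"older and bounded on $\R^N$, so Arzel\`a--Ascoli produces a subsequence $u_{n_k}\to u_\infty$ uniformly on $\R^N$, with $u_\infty$ continuous and vanishing on $\R^N\setminus\Omega$.

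The second step is to upgrade the regularity of $u_\infty$ to $C^{0,s}$ with seminorm at most $1$, first by obtaining $L^q$ bounds on H\"older quotients of order $t<s$. For any $t\in(0,s)$ and $q\ge 1$, eventually $p_{n_k}^->q$, so Proposition~\ref{prop.reg} gives
\[
[u_{n_k}]_{t,q;\Omega}\le C(\Omega,N,q,s-t)\,[u_{n_k}]_{s,G_{n_k}}.
\]
Uniform convergence forces $|D^t u_{n_k}|^q\to |D^t u_\infty|^q$ pointwise on $\Omega\times\Omega$, so Fatou's lemma combined with $\limsup_k[u_{n_k}]_{s,G_{n_k}}\le 1$ yields
\[
[u_\infty]_{t,q;\Omega}\le C(\Omega,N,q,s-t).
\]

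The third step is the passage from this $L^q$ control to a pointwise H\"older estimate. Fix $x_0,y_0\in\Omega$ with $d:=|x_0-y_0|>0$ and choose $r>0$ small enough that $B_r(x_0)$ and $B_r(y_0)$ are disjoint and contained in $\Omega$. On $B_r(x_0)\times B_r(y_0)$ one has $|x-y|\le d+2r$, so the previous bound implies
\[
\iint_{B_r(x_0)\times B_r(y_0)}|u_\infty(x)-u_\infty(y)|^q\,dx\,dy\le C(\Omega,N,q,s-t)^q(d+2r)^{tq+N}.
\]
Dividing by $|B_r|^2$, extracting the $q$-th root, and sending $q\to\infty$, the left-hand side converges to $\sup_{B_r(x_0)\times B_r(y_0)}|u_\infty(x)-u_\infty(y)|$ (by continuity of $u_\infty$), while the right-hand side tends to $\d^{s-t}(d+2r)^t$ thanks to Remark~\ref{remark.clave}. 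Sending $r\to 0$ and then $t\to s$ produces $|u_\infty(x_0)-u_\infty(y_0)|\le|x_0-y_0|^s$. The case $x_0\in\Omega$, $y_0\notin\Omega$ follows by picking $z_0\in\partial\Omega$ on the segment from $x_0$ to $y_0$, using $u_\infty(z_0)=0$ together with $|x_0-z_0|\le|x_0-y_0|$; the exterior-exterior case is trivial. The principal obstacle is the tight tracking of constants across this iterated limit: each of the three limits $q\to\infty$, $r\to 0$ and $t\to s$ must contribute a factor tending exactly to $1$, which is precisely what Remark~\ref{remark.clave} was engineered to ensure.
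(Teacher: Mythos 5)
Your proof is correct and follows the paper's strategy closely. The first two steps (uniform bound on $[u_n]_{s,G_n}$ with constant tending to $1$, passage through Proposition~\ref{prop.reg} to $W^{t,q}_0(\Omega)$, Morrey embedding, Arzel\`a--Ascoli, then Fatou giving $[u_\infty]_{t,q;\Omega}\le C(\Omega,N,q,s-t)$) are exactly the paper's. The only divergence is in how you pass from $L^q$ control of $D^t u_\infty$ to the pointwise bound: the paper argues via a Chebyshev/level-set inequality, taking $M<\|D^t u_\infty\|_{L^\infty(\Omega\times\Omega)}$ and the set $A=\{|D^tu_\infty|>M\}$ with $0<\mu(A)<\infty$ to deduce $M\le\liminf_q[u_\infty]_{t,q;\Omega}$, while you average over a pair of disjoint balls $B_r(x_0)\times B_r(y_0)$ and send $q\to\infty$, $r\to 0$, $t\to s$. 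Both routes are standard ways to extract $L^\infty$ from uniformly controlled $L^q$ norms; the paper's version avoids introducing the extra parameter $r$ and the explicit $(d+2r)^{tq+N}$ bookkeeping, and dispenses with the extension to $\R^N\times\R^N$ implicitly (as you do by taking a boundary point on the segment), so the two arguments are of essentially equal length and difficulty.
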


\begin{proof}
Let $0<t<s<1$ and $q>1$ such that $tq>N$. Hence, there exists $n_0\in\N$ such that $p_n^->q$ for every $n\ge n_0$.  Corollary \ref{cor.inmersion} guarantees  that the sequence $\{u_n\}_{n\ge n_0}$ is bounded in $W^{t,q}_0(\Omega)$.

The usual embedding for  fractional Sobolev spaces ensures that $W^{t,q}_0(\Omega)\subset C^{0,\alpha}(\R^N)$ with $\alpha = t - \frac{N}{q}$ continuously. Hence, by Arzela-Ascoli's theorem, there exists a subsequence (still denoted by $\{u_n\}_{n\ge n_0}$) and a function $u_\infty\in C_b(\R^N)$ such that $u_n\to u_\infty$ uniformly on compact sets.

The fact that $u_n = 0$ in $\R^N\setminus \Omega$ for every $n\in \N$, implies  that indeed $u_\infty = 0$ in $\R^N\setminus \Omega$ and hence the convergence $u_n\to u_\infty$ is uniformly on $\R^N$. Moreover, $u_\infty \in C^{0,\alpha}(\R^N)$.

Let us now show that $u_\infty$ actually belongs to $C^{0,t}(\R^N)$. To this end, we need to estimate $\|D^t u_\infty\|_{L^\infty(\Omega\times\Omega)}$.

Take some $M<\|D^t u_\infty\|_{L^\infty(\Omega\times\Omega)}$ and consider the set
$$A=\{(x,y)\in \Omega\times\Omega\colon |D^t u_\infty(x,y)|>M\}.$$
Since $D^t u_\infty \in L^q(\Omega\times\Omega, d\mu)$, it follows that $0<\mu(A) <\infty$
(observe that a set has $\mu-$measure zero if and only if it has zero Lebesgue measure).
Then,
$$
[u_\infty]_{t,q;\Omega}^q \ge \iint_A |D^t u_\infty|^q\, d\mu\ge M^q \mu(A).
$$
This last inequality holds for every $q>\frac{N}{t}$, hence, we can pass to the limit $q\to\infty$ and obtain
$$
M\le \liminf_{q\to\infty} [u_\infty]_{t,q;\Omega}.
$$
Since $M<\|D^t u_\infty\|_\infty$ is arbitrary, we get
\begin{equation}\label{infti-t}
\|D^t u_\infty\|_\infty\le \liminf_{q\to\infty} [u_\infty]_{t,q;\Omega}.
\end{equation}
On the other hand, recall that $u_n\to u_\infty$, thus by Fatou's lemma we have
\begin{equation}\label{untq}
[u_\infty]_{t,q;\Omega}\le \liminf_{n\to\infty}  [u_n]_{t,q;\Omega}.
\end{equation}
Proposition \ref{prop.reg} leads now to
$$
[u_n]_{t,q;\Omega}\le C(\Omega, N, q, s-t) [u_n]_{s, G_n}.
$$
We pass to the limit using Remark \ref{Cn1}, to obtain
\begin{equation}\label{untq2}
\liminf_{n\to\infty}  [u_n]_{t,q;\Omega} \le C(\Omega, N, q, s-t).
\end{equation}

Combining \eqref{infti-t}, \eqref{untq} and \eqref{untq2} and recalling Remark \ref{remark.clave}, it gives
\begin{align*}
\|D^t u_\infty\|_\infty &\le \liminf_{q\to\infty}\liminf_{n\to\infty} [u_n]_{t,q;\Omega}\\
&\le \limsup_{q\to\infty} C(\Omega, N, q, s-t)\\
&\le \d^{s-t},
\end{align*}
where $\d=\d(\Omega)$ denotes the diameter of $\Omega$.

From this last inequality, the result  holds by taking the limit $t\to s$.
\end{proof}
Consider the Space
\begin{equation}\label{Y}
Y=\left\{\phi\in C^{0,s}(\R^N)\colon \phi=0 \text{ in } \mathbb R^N\setminus\Omega,  \text{ and } \sup_{\substack{x,y\in \R^N \\ x\neq y}}\frac{| \phi(y)-\phi(x)|}{|x-y|^s} \leq 1\right\}
\end{equation}
and observe that Proposition \ref{cota} implies that $u_\infty\in Y$.  Indeed, we have the following general estimates for $u_\infty$.
\begin{prop}
Let $u_\infty$ be a function given by Proposition \ref{cota}. Then, $u_\infty$ verifies the following estimates in viscosity sense
\begin{equation}\label{genest}
\LL_s^+ u_\infty\leq 1\quad \text{and}\quad \LL_s^-u_\infty\geq -1.
\end{equation}
\end{prop}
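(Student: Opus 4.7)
The plan is to exploit the fact, already established in Proposition~\ref{cota}, that $u_\infty\in Y$, i.e.\ $\|D^s u_\infty\|_\infty\le 1$. This bound is essentially an $L^\infty$ bound on the H\"older quotient that defines $\mathcal{L}_s^\pm$, so the pointwise estimates will be immediate; the only thing to verify is the passage to viscosity sense, which boils down to a short monotonicity argument.

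First, I would record the pointwise statement. Since $|u_\infty(x)-u_\infty(y)|\le |x-y|^s$ for every $x,y\in\R^N$, dividing by $|x-y|^s$ and taking the supremum (resp.\ infimum) over $y\ne x$ yields
\[
\mathcal{L}_s^+ u_\infty(x) = \sup_{y\in\R^N}\frac{u_\infty(x)-u_\infty(y)}{|x-y|^s}\le 1,\qquad \mathcal{L}_s^- u_\infty(x) = \inf_{y\in\R^N}\frac{u_\infty(x)-u_\infty(y)}{|x-y|^s}\ge -1,
\]
pointwise for every $x\in\Omega$. The operators $\mathcal{L}_s^\pm$ are thus well-defined pointwise on $u_\infty$ without any further regularity, simply because the supremum/infimum of a bounded family is a real number.

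Next, to promote this to the viscosity sense, I would argue as follows. Suppose $\varphi\in C^1_c(\R^N)$ touches $u_\infty$ from above at $x_0\in\Omega$, i.e.\ $\varphi(x_0)=u_\infty(x_0)$ and $\varphi\ge u_\infty$ on $\R^N$. Then for every $y$,
\[
\varphi(x_0)-\varphi(y)=u_\infty(x_0)-\varphi(y)\le u_\infty(x_0)-u_\infty(y),
\]
so dividing by $|x_0-y|^s$ and taking the supremum over $y$ gives $\mathcal{L}_s^+\varphi(x_0)\le \mathcal{L}_s^+ u_\infty(x_0)\le 1$. Symmetrically, if $\varphi$ touches $u_\infty$ from below at $x_0$, then $\varphi(x_0)-\varphi(y)\ge u_\infty(x_0)-u_\infty(y)$ for every $y$, and taking the infimum yields $\mathcal{L}_s^-\varphi(x_0)\ge \mathcal{L}_s^- u_\infty(x_0)\ge -1$. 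The hypothesis that $u_\infty$ is continuous (in particular, upper and lower semicontinuous) and vanishes on $\R^N\setminus\Omega$ ensures the admissibility conditions in the definitions of viscosity sub- and supersolutions are satisfied.

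There is no real obstacle in this argument: once Proposition~\ref{cota} is in hand, the conclusion \eqref{genest} is a direct consequence of the $L^\infty$ bound on $D^s u_\infty$ together with the monotonicity of the sup/inf with respect to pointwise inequalities on the test function. The only delicate point worth emphasizing is that, unlike for $(-\Delta_g)^s$, no regularity assumption of the type $p^->\tfrac{1}{1-s}$ is needed here: the operators $\mathcal{L}_s^\pm$ make pointwise sense for any bounded function.
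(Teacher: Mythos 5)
Your argument is correct and matches the paper's proof essentially verbatim: both use $u_\infty\in Y$ (i.e.\ $\|D^s u_\infty\|_\infty\le 1$) from Proposition~\ref{cota}, and then transfer the pointwise bound to a test function $\varphi$ touching $u_\infty$ from above (resp.\ below) via the monotone chain $\varphi(x_0)-\varphi(y)\le u_\infty(x_0)-u_\infty(y)\le |x_0-y|^s$. The small intermediate step of recording $\mathcal{L}_s^\pm u_\infty$ pointwise is a harmless reorganization, not a different route.
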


\begin{proof}
This proposition is precisely the content of \cite[Lemma 6.4]{FPL}. Nevertheless, we include the proof under the notation adopted in this paper for convenience of the reader.

Take first some $\varphi\in  C^1_c(\R^N)$  such that $u_\infty(x_0)=\varphi(x_0)$  and  $\varphi(y)\geq u_\infty(y)$ for all $y\in \R^N$. Taking into account that $u_\infty\in Y$, it is easily deduced that
$$
\frac{\varphi(x_0)-\varphi(y)}{ |x_0-y|^s}\le \frac{u_\infty(x_0)-u_\infty(y)}{ |x_0-y|^s}\leq 1,\quad \text{for any } y\in\R^N.
$$
In particular,
$$
\sup_{y\in\R^N}\frac{\varphi(x_0)-\varphi(y)}{ |x_0-y|^s}=\LL_s^+ \varphi(x_0)\leq 1,
$$
which proves that $\LL_s^+ u_\infty\leq 1$ in viscosity sense.

On the other hand,  we choose  $\phi\in  C^1_c(\R^N)$ verifying $u_\infty(x_0)=\phi(x_0)$  and $\phi(y)\leq u_\infty(y)$ for every $y\in \R^N$, then
$$
\phi(x_0)-\phi(y)\ge u_\infty(x_0)-u_\infty(y)\ge -|x_0-y|^s,\quad \text{for any } y\in\R^N.
$$
Therefore,
$$
\inf_{y\in\R^N}\frac{\phi(x_0)-\phi(y)}{ |x_0-y|^s}=\LL_s^- \varphi(x_0)\geq -1,
$$
which means that $u_\infty$ verifies
$$
\LL_s^- u_\infty\geq -1,
$$
in viscosity sense, and concludes the proof.
\end{proof}

In order to study the limiting equations, it will be useful to introduce the following operators: given $G$ an Orlicz function satisfying \eqref{cond} such that $p^->1/(1-s)$, $\phi\in C^1(\R^N)\cap L^\infty(\R^N)$ and $x\in\R^N$, we define the sets
\begin{equation}
\label{S+-}
S_\phi^+(x) := \{y\in\R^N\colon  D^s\phi(x,y)\geq 0\}
\end{equation}
and the operators
\begin{equation}
\label{lambda+-}
\begin{split}
&\lambda_g^+(\phi)(x)=\inf\left\{\lambda>0\colon \int _{S^+\phi(x)} g\left(\frac{D^s \phi(x,y)}{\lambda}\right)\frac1{|x-y|^{N+s}}\,dy\leq 1\right\},\\
&\lambda_g^-(\phi)(x) = -\lambda_g^+(-\phi)(x).
\end{split}
\end{equation}
Observe that Lemma \ref{pointwise} guarantees that   the operators $\lambda_g^{\pm}(\phi)$ are properly defined.

We focus now on the identification of the limit problem. With this aim in mind, we first show the following convergence result.

Let us introduce the space
$$
Z=\left\{\phi\in C^1(\R^N)\colon \lim_{|y|\to\infty} \phi(y)=0 \text{ and } \phi(\tilde y)=0 \text{ for some } \tilde y\in \R^N\right\}.
$$
Notice that $C^1_c(\R^N)\subset Z\subset C^1(\R^N)\cap L^\infty(\R^N)$.

First a technical lemma.
\begin{lema}\label{technical}
Let $f\in C(\R^N)$ be such that there exists $x_0\in \R^N$ with $f(x_0) = \lim_{|x|\to\infty} f(x)$. Then, there exists $x_1\in \R^N$ such that
$$
f(x_1) = \sup_{x\in\R^N} f(x).
$$
\end{lema}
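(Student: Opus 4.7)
The statement is a standard compactness-and-continuity argument; the hypothesis that $f$ attains its limit at infinity at some point $x_0$ is precisely what lets us dispose of the case where the supremum equals that limit without any extra work.

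My plan is to set $L := f(x_0) = \lim_{|x|\to\infty} f(x)$ and $S := \sup_{x\in\R^N} f(x)$, and split into two cases. First, since $L$ is a finite real number (it equals $f(x_0)$), the limit condition gives some $R_0>0$ with $|f(x)-L|\le 1$ for $|x|\ge R_0$; combined with continuity on the compact ball $\overline{B_{R_0}}$, this makes $f$ bounded on $\R^N$, so $S<\infty$.

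In the easy case $S=L$, the conclusion is immediate with $x_1=x_0$, since $f(x_0)=L=S$. In the remaining case $S>L$, pick $\varepsilon>0$ with $L+\varepsilon<S$ and use $\lim_{|x|\to\infty} f(x)=L$ to find $R>0$ such that $f(x)<L+\varepsilon<S$ whenever $|x|\ge R$. Now take a maximizing sequence $\{y_k\}\subset\R^N$ with $f(y_k)\to S$. For all $k$ sufficiently large we must have $f(y_k)>L+\varepsilon$, hence $|y_k|<R$. Thus a tail of the sequence lies in the compact set $\overline{B_R}$; by Bolzano--Weierstrass a subsequence $y_{k_j}\to x_1\in\overline{B_R}$, and continuity of $f$ yields $f(x_1)=\lim_j f(y_{k_j})=S$.

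There is no real obstacle here; the only point to watch is that the hypothesis is stated as $f(x_0)=\lim_{|x|\to\infty}f(x)$ (so the limit is finite and is attained at $x_0$), which is what rules out the pathological situation in which $S$ is approached only along a sequence escaping to infinity. That is exactly the content of the case split above.
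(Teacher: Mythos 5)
Your proof is correct and follows essentially the same strategy as the paper's: take a maximizing sequence, use compactness plus continuity if it stays bounded, and fall back on the limit-at-infinity hypothesis (via $x_0$) otherwise. The paper organizes the dichotomy as ``the maximizing sequence has an accumulation point or escapes to infinity'' rather than your ``$S=L$ or $S>L$,'' but the two case splits are logically interchangeable and the underlying argument is identical.
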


\begin{proof}
Let $\{x_n\}_{n\in\N}\subset \R^N$ be such that $f(x_n)\to \sup_{x\in\R^N} f(x)$. In case that $\{x_n\}_{n\in\N}$ has an accumulation point $\bar x$, it follows that $f(\bar x) = \sup_{x\in\R^N} f(x)$. Else, it holds that $|x_n|\to\infty$. But then
$\sup_{x\in\R^N} f(x) = \lim_{|x|\to\infty}f(x) = f(x_0)$.
\end{proof}

\begin{lema}\label{chambolle}
Let $\{G_n\}_{n\in\N}$ be a sequence of Orlicz functions satisfying \eqref{cond} with $p^+_n\le \beta p^-_n\to\infty$ as $n\to\infty$. Let $\{x_n\}_{n\in\N}\subset \R^N$ be such that $x_n\to x_0$ as $n\to\infty$. Then, for every $\phi\in Z$ we have that
$$
\lambda_{g_n}^\pm(\phi)(x_n)\to \LL_s^\pm \phi(x_0)\qquad \text{as } n\to\infty,
$$
where the operators $\LL_s^\pm$ are given in \eqref{Linfty}.
\end{lema}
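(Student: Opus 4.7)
The plan is to prove the $+$ case; the $-$ case follows from $\lambda_{g_n}^-(\phi) = -\lambda_{g_n}^+(-\phi)$ together with $\LL_s^-\phi = -\LL_s^+(-\phi)$. Write $\Lambda := \LL_s^+\phi(x_0) = \sup_{y\in\R^N} D^s\phi(x_0,y)$ and $\lambda_n := \lambda_{g_n}^+(\phi)(x_n)$. Since $\phi\in Z\subset C^1(\R^N)\cap L^\infty(\R^N)$, the map $y\mapsto D^s\phi(x_0,y)$ extends continuously to $\R^N$ by setting its value at $y=x_0$ to $0$ (using $|\phi(x_0)-\phi(y)|\le \|\nabla\phi\|_\infty|x_0-y|$ so that $|D^s\phi(x_0,y)|\le \|\nabla\phi\|_\infty|x_0-y|^{1-s}$ near $x_0$), and it vanishes at infinity since $\phi(y)\to 0$. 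Lemma \ref{technical} thus applies and yields a maximizer $y^*\in\R^N$ for $\Lambda$; in particular $\Lambda\ge 0$ by letting $y\to x_0$. The same reasoning, combined with the pointwise convergence $D^s\phi(x_n,y)\to D^s\phi(x_0,y)$ for $y\neq x_0$, gives that $\LL_s^+\phi$ is continuous at $x_0$, so $\LL_s^+\phi(x_n)\to\Lambda$.

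For the upper bound $\limsup_n\lambda_n\le\Lambda$, fix $\ve>0$ and set $\lambda=\Lambda+\ve$. Define
$$
J_n(\lambda,x_n) := \int_{S_\phi^+(x_n)} g_n\Bigl(\tfrac{D^s\phi(x_n,y)}{\lambda}\Bigr)\frac{dy}{|x_n-y|^{N+s}}.
$$
The goal is to show $J_n(\lambda,x_n)\to 0$ (which is stronger than $\le 1$). I split the integral at $|x_n-y|=\delta$, choosing $\delta>0$ small enough that $\|\nabla\phi\|_\infty\delta^{1-s}/\lambda<1$. On the near-diagonal piece $|x_n-y|<\delta$, the Lipschitz bound $|D^s\phi(x_n,y)|\le\|\nabla\phi\|_\infty|x_n-y|^{1-s}$ puts the argument of $g_n$ into $(0,1)$, and \eqref{cotag2} yields a contribution of order
$$
\frac{p_n^+ N\omega_N}{((1-s)p_n^--1)\,\delta^s}\Bigl(\tfrac{\|\nabla\phi\|_\infty\delta^{1-s}}{\lambda}\Bigr)^{p_n^--1},
$$
which tends to $0$ since $p_n^+/p_n^-\le\beta$ and the base is $<1$. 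On the far piece $|x_n-y|\ge\delta$, the convergence $\LL_s^+\phi(x_n)/\lambda\to\Lambda/(\Lambda+\ve)<1$ ensures that eventually $D^s\phi(x_n,y)/\lambda\le r<1$ uniformly in $y$, and \eqref{cotag2} bounds the contribution by $p_n^+ r^{p_n^--1}N\omega_N/(s\delta^s)\to 0$. Combining the two estimates gives $J_n(\lambda,x_n)\to 0$ and hence $\lambda_n\le\Lambda+\ve$ for all large $n$.

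For the matching bound $\liminf_n\lambda_n\ge\Lambda$, the case $\Lambda=0$ is immediate from $\lambda_n\ge 0$. If $\Lambda>0$ then $y^*\neq x_0$, so I fix $r>0$ with $B:=B(y^*,r)$ bounded away from $x_0$ and with $D^s\phi(x_0,y)\ge\Lambda-\ve/4$ on $B$. By continuity of $(x,y)\mapsto D^s\phi(x,y)$ off the diagonal, for $n$ large $D^s\phi(x_n,y)\ge\Lambda-\ve/2$ on $\overline{B}$ and $x_n\notin\overline{B}$. Setting $\lambda=\Lambda-\ve$, on $B$ one has $D^s\phi(x_n,y)/\lambda\ge q:=(\Lambda-\ve/2)/(\Lambda-\ve)>1$, so \eqref{cotag1} yields
$$
J_n(\lambda,x_n)\ge p_n^- q^{p_n^--1}\int_B \frac{dy}{|x_n-y|^{N+s}}\ge c\, p_n^- q^{p_n^--1}\to\infty
$$
for some $c>0$ independent of $n$; hence $J_n(\lambda,x_n)>1$ eventually and $\lambda_n\ge\Lambda-\ve$. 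Sending $\ve\to 0$ in both estimates finishes the proof. The main technical delicacy is the balancing in the near-diagonal piece above: the measure $|x_n-y|^{-N-s}dy$ is non-integrable at the diagonal and the exponent $p_n^--1$ grows with $n$, so one must verify that the polynomial prefactor $p_n^+/((1-s)p_n^--1)$, kept bounded thanks to $p_n^+\le\beta p_n^-$, does not swamp the geometric decay $(\|\nabla\phi\|_\infty\delta^{1-s}/\lambda)^{p_n^--1}$ coming from the Lipschitz bound and the choice of $\delta$.
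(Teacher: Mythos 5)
Your proof is correct, and for the upper bound it is genuinely different from (and arguably more transparent than) the paper's argument. Both proofs handle the lower bound $\liminf_n\lambda_n\ge\Lambda$ in essentially the same way: restrict to a small ball $B$ near a (near-)maximizer of $D^s\phi(x_0,\cdot)$, use joint continuity of $D^s\phi$ off the diagonal to push the estimate to $x_n$, and invoke \eqref{cotag1} to conclude that the resulting contribution blows up if $\lambda<\Lambda$. For the upper bound, however, the routes diverge: the paper starts from $J_n(\lambda_n,x_n)=1$, splits at $|x_n-y|=1$, and on the near piece introduces the interpolation $|D^s\phi|\le(L|x_n-y|^{1-s})^{\gamma_n}(\LL_s^+\phi(x_n))^{1-\gamma_n}$ with the carefully tuned $\gamma_n=\tfrac{1+s}{(1-s)(p_n^+-1)}\to 0$, which makes the singular integral convergent and eventually yields $\lambda_n\le C^{1/(p_n^+-1)}(p_n^+)^{1/(p_n^+-1)}L^{\gamma_n}(\LL_s^+\phi(x_n))^{1-\gamma_n}$. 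You instead fix $\lambda=\Lambda+\ve$ outright and prove directly that $J_n(\lambda,x_n)\to 0$, splitting at a $\delta$ chosen small enough that the Lipschitz bound forces the argument of $g_n$ below $1$ on the near piece, so that \eqref{cotag2} alone controls both pieces. This avoids the $\gamma_n$ interpolation entirely; what it buys is a shorter and more elementary estimate, at the price of needing to verify $\LL_s^+\phi(x_n)\to\Lambda$ for the far piece — a fact the paper also relies on (to pass from $\LL_s^+\phi(x_n)$ to $\LL_s^+\phi(x_0)$) but, like you, does not spell out. Your treatment of the degenerate case $\Lambda=0$ is actually more careful than the paper's, which asserts without justification that $\LL_s^+\phi(x_0)>0$; that need not hold (e.g.\ if $\phi$ attains its minimum at $x_0$), and your observation that the lower bound is then trivial closes that small gap. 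Two cosmetic points worth tightening if you write this up: replace the global $\|\nabla\phi\|_\infty$ by a local Lipschitz constant of $\phi$ on a bounded neighbourhood of $x_0$ (membership in $Z$ does not force $\nabla\phi$ bounded), and give the one-line argument for $\LL_s^+\phi(x_n)\to\LL_s^+\phi(x_0)$ (near-maximizers $y_n$ either accumulate at some $y^*\neq x_0$, go to $x_0$, or escape to infinity, and in the last two cases $D^s\phi(x_n,y_n)\to 0\le\Lambda$).
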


\begin{proof} We just analyze the convergence $\lambda_{g_n}^+(\phi)(x_n)\to \LL_s^+ \phi(x_0)$, since the proof of the second limit follows from the first one, observing that $\LL_s^-(\phi) = -\LL_s^+(-\phi)$.

 Notice that $\LL_s^+\phi(x_0)>0$. Then, for any $0<t<\LL_s^+\phi(x_0)$ there exists $y_0\in\R^N$ verifying $D^s\phi(x_0,y_0)>t$. The fact that $\phi\in C^1(\R^N)$ guarantees the continuous extension of  $D^s\phi$  to $\R^N\times\R^N$, since $D^s\phi(x,x)=0$. Hence there exists $\delta>0$ such that $B_\delta(y_0)\subset \{D^s\phi(x_0,y)>t\}$. Furthermore, this in particular means that $x_0\not\in B_\delta(y_0)$. In addition, by continuity
$D^s\phi(x_n,y)\to D^s\phi(x_0,y)$ as $n\to\infty$, there exists $n_0\in\N$ such that if $n\ge n_0$, then $B_\delta(y_0)\subset \{D^s\phi(x_n,y)>t\}$.

Consequently, by the definition of $\lambda_{g_n}^+(\phi)(x_n)$ and the above considerations
\begin{align*}
1 &= \int_{S_\phi^+(x_n)}g_n\left(\frac{D^s \phi(x_n,y)}{\lambda_{g_n}^+(\phi)(x_n)}\right)\frac1{|x_n-y|^{N+s}}\, dy \\
& \geq \int_{B_\delta(y_0)}g_n\left(\frac{D^s \phi(x_n,y)}{\lambda_{g_n}^+(\phi)(x_n)}\right)\frac1{|x_n-y|^{N+s}}\, dy\\
& \geq g_n\left(\frac{t}{\lambda_{g_n}^+(\phi)(x_n)}\right)\int_{B_\delta(y_0)}\frac1{|x_n-y|^{N+s}}\, dy\\
& \geq Cg_n\left(\frac{t}{\lambda_{g_n}^+(\phi)(x_n)}\right),
\end{align*}
where $C>0$ depends on $N, s, \delta$ and $|x_0-y_0|$, tough it is independent on $n\in\N$.

Assume now that $t<\lambda_{g_n}^+(\phi)(x_n)$. By \eqref{cotag2},
$$
g_n\left(\frac{t}{\lambda_{g_n}^+(\phi)(x_n)}\right)\geq p_n^- \left(\frac t{\lambda_{g_n}^+(\phi)(x_n)}\right)^{p_n^+-1},
$$
and as a result,
\begin{equation}\label{t<}
\lambda_{g_n}^+(\phi)(x_n)\geq t(C p^-_n)^\frac{1}{p_n^+ - 1}.
\end{equation}
In case $t\ge \lambda_{g_n}^+(\phi)(x_n)$, we can infer analogously that
\begin{equation}\label{t>}
\lambda_{g_n}^+(\phi)(x_n)\geq t(C p^-_n)^\frac{1}{p_n^- - 1}.
\end{equation}
  Inequalities  \eqref{t<} and \eqref{t>}, combined with condition \eqref{cond.beta} lead to
$$
\liminf_{n\to\infty}\lambda_{g_n}^+(\phi)(x_n)\geq t.
$$
This proves that
\begin{equation}\label{lowerL}
\liminf_{n\to\infty}\lambda_{g_n}^+(\phi)(x_n)\geq\LL_s^+\phi(x_0).
\end{equation}

Next we show the upper estimate.

Since  $\lim_{|y|\to\infty}\phi(y)=0=\phi(\tilde y)$, Lemma \ref{technical} ensures the existence of  $y_1\in\mathbb R^N$ such that
$$
\sup_{y\in \mathbb R^N} (\phi(y)-\phi(x_n))_+=(\phi(y_1)-\phi(x_n))_+.
$$

We first write
\begin{align*}
1=& \int _{S_\phi^+(x_n)\cap B_1(x_n)} g_n\left(\frac{D^s\phi(x_n,y)}{\lambda_n^+(\phi)(x_n)}\right) \frac{1}{|y-x_n|^{N+s}}\,dy\\
&+ \int _{S_\phi^+(x_n)\setminus B_1(x_n)} g_n\left(\frac{D^s\phi(x_n,y)}{\lambda_n^+(\phi)(x_n)}\right) \frac{1}{|y-x_n|^{N+s}}\,dy
= I_n+II_n.
\end{align*}
This in particular means that either $I_n\geq 1/2$ or $II_n\geq 1/2$.

Let us suppose first that $I_n\geq 1/2$ and decompose the integral according to the following sets
\begin{align*}
&A=\left\{y\in B_1(x_n)\cap S_\phi^+(x_n)\colon D^s\phi(x_n,y)\le \lambda_n^+(\phi)(x_n)\right \}\quad \text{and}\\
&B=\Big(B_1(x_n)\cap S_\phi^+(x_n)\Big)\setminus A.
\end{align*}

Denote by $L$ the Lipschitz constant for $\phi$ in $B_2(x_0)$ and choose $\{\gamma_n\}_{n\in\N}\subset \R$  such that $0<\gamma_n\to 0$ as $n\to\infty$ to be determined.

Observe that the next pointwise estimate holds
$$
|D^s\phi(x_n, y)|\le (L|x_n-y|^{1-s})^{\gamma_n} (\LL_s^+ \phi(x_n))^{1-\gamma_n}.
$$
Therefore, applying \eqref{cotag1} or \eqref{cotag2} depending whether  $y\in A$ or $y\in B$, we arrive at
\begin{equation}\label{cotaAB}
g_n\left(\frac{D^s\phi(x_n,y)}{\lambda_{g_n}^+(\phi)(x_n)}\right) \le p_n^+ (L|x_n-y|^{1-s})^{\gamma_n(p_n^\pm-1)} \left(\frac{(\LL_s^+ \phi(x_n))^{1-\gamma_n}}{\lambda_{g_n}^+(\phi)(x_n)}\right)^{p_n^\pm-1}.
\end{equation}
Without loss of generality, it can be assumed that $L>1$. Furthermore, suppose that
$$
\frac{(\LL_s^+ \phi(x_n))^{1-\gamma_n}}{\lambda_{g_n}^+(\phi)(x_n)}\ge 1,
$$
the other case being analogous. Thus, by \eqref{cotaAB},
\begin{align*}
\frac12 \leq I_n \le& p_n^+ L^{\gamma_n (p_n^+-1)} \left(\frac{(\LL_s^+ \phi(x_n))^{1-\gamma_n}}{\lambda_{g_n}^+(\phi)(x_n)}\right)^{p_n^+-1}\\
&\times \int_{B_1(x_n)} \frac{|x_n-y|^{(1-s)\gamma_n(p_n^+-1)} + |x_n-y|^{(1-s)\gamma_n(p_n^- -1)}}{|x_n-y|^{N+s}}\, dy.
\end{align*}
An easy computation shows that if we take for instance $\gamma_n :=\frac{1+s}{(1-s)(p_n^+-1)}$,  the integral above is bounded by a constant, which only depends  on the dimension $N$.

Consequently,
$$
\lambda_{g_n}^+(\phi)(x_n)\le C(N)^{\frac{1}{p_n^+-1}} (p_n^+)^{\frac{1}{p_n^+-1}} L^{\gamma_n} (\LL_s^+\phi(x_n))^{1-\gamma_n}.
$$

This shows that
\begin{equation}\label{cotasup}
\limsup_{n\to\infty} \lambda_{g_n}^+(\phi)(x_n)\leq \LL_s^+\phi(x_0).
\end{equation}

Then, \eqref{lowerL} together with \eqref{cotasup} imply the result.

In case $I_n<\frac12$ and thus $II_n\ge \frac12$, the argument is completely analogous and is left to the reader.
\end{proof}

We have all of the ingredients to identify the limit problem, for which the influence of the function $f$   is only through its support and sign. As advanced, we get the same limit than in the Fractional $p$-Laplacian case, see \cite{FPL}.

\begin{thm}\label{teo.pblim}
Let $f =f(x)\in C(\overline{\Omega})$. A function $u_\infty\in Y$ obtained as a uniform limit of a subsequence of $\{u_n\}_{n\in\N}$,  is a viscosity solution of the problem
$$
\begin{cases}
\LL_s^+ u_\infty = 1 & \text{in }\{ f>0\},\\
\LL_s^- u_\infty = -1 & \text{in }\{ f<0\},\\
\LL_s u_\infty = 0 & \text{in }\Omega\setminus \supp(f)^o,\\
\LL_s u_\infty \ge 0 & \text{in } \Omega \cap\partial\{ f>0\}\setminus \partial\{f<0\},\\
\LL_s u_\infty \le 0 & \text{in } \Omega \cap\partial\{f<0\}\setminus \partial\{f>0\}.
\end{cases}
$$
\end{thm}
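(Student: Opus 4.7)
The plan is to pass to the viscosity limit in $(-\Delta_{g_n})^s u_n = f$, relying on the fact that the $u_n$ are viscosity solutions by Lemma \ref{debilesvisco} and on the identification of the Luxemburg limits provided by Lemma \ref{chambolle}. Fix $x_0\in\Omega$ and a test function $\varphi\in C^1_c(\R^N)\subset Z$ that touches $u_\infty$ strictly at $x_0$. Since $u_n\to u_\infty$ uniformly on $\R^N$ by Proposition \ref{cota}, the standard perturbation argument together with Remark \ref{rem.cte} produces points $x_n\to x_0$ such that $\varphi$, up to an additive constant that does not affect the operator, touches $u_n$ on the same side at $x_n$; the viscosity sub/supersolution property of $u_n$ then yields either $(-\Delta_{g_n})^s\varphi(x_n)\le f(x_n)$ or $(-\Delta_{g_n})^s\varphi(x_n)\ge f(x_n)$.

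Using that $g_n$ is odd, decompose
\[
(-\Delta_{g_n})^s\varphi(x_n) = 2(A_n - B_n),
\]
with $A_n := \int_{S_\varphi^+(x_n)} g_n(D^s\varphi)\,|x_n-y|^{-(N+s)}\,dy \ge 0$ and $B_n := \int_{S_\varphi^-(x_n)} g_n(|D^s\varphi|)\,|x_n-y|^{-(N+s)}\,dy \ge 0$. The defining identity $\int g_n(D^s\varphi/\lambda_{g_n}^+(\varphi)(x_n))\,|x_n-y|^{-(N+s)}\,dy = 1$ on $S_\varphi^+(x_n)$, combined with the scaling \eqref{estimareg} (that is, $g_n(\tau a)\le\beta g_n(a)\tau^{p_n^- - 1}$ for $0<\tau<1$ and the matching lower bound for $\tau>1$), yields the dichotomy: if $\lambda_{g_n}^+(\varphi)(x_n)\to\ell<1$ then $A_n\to 0$, while $\ell>1$ forces $A_n\to+\infty$. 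The analogous statement for $B_n$ uses $-\varphi$ in place of $\varphi$. By Lemma \ref{chambolle} these limits coincide with $\LL_s^+\varphi(x_0)$ and $-\LL_s^-\varphi(x_0)$.

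Case analysis according to the sign of $f$. When $f(x_0)>0$, the inequality $\LL_s^+u_\infty(x_0)\le 1$ is a direct consequence of \eqref{genest}; conversely, if $\varphi$ touches $u_\infty$ from below at $x_0$ with $\LL_s^+\varphi(x_0)<1$, then $A_n\to 0$ and so $(-\Delta_{g_n})^s\varphi(x_n)\le 2A_n\to 0<f(x_0)$, contradicting the supersolution inequality. The case $f(x_0)<0$ is symmetric. When $x_0\in\Omega\setminus\supp(f)^o$, so that $f\equiv 0$ in a neighborhood of $x_0$, both viscosity inequalities must be established: for a $\varphi$ touching from above with $\LL_s\varphi(x_0)>0$, set $\alpha=\LL_s^+\varphi(x_0)>\beta=-\LL_s^-\varphi(x_0)$, choose $r$ with $\beta<r<\alpha$, and argue as in the proof of Lemma \ref{chambolle} to get $A_n\ge Cg_n(r')$ for some $r'\in(r,\alpha)$ and $C>0$ (using a neighborhood of a point $y_0$ where $D^s\varphi>r'$ on $S_\varphi^+$); on the other hand \eqref{estimareg} together with a near/far splitting of the integral defining $B_n$ yields $B_n=o(g_n(r'))$, contradicting $A_n\le B_n$. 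The supersolution inequality in $\Omega\setminus\supp(f)^o$ follows symmetrically, and the one-sided conditions on $\Omega\cap\partial\{f>0\}\setminus\partial\{f<0\}$ and its mirror are obtained by the same argument, since $f$ keeps constant sign in a neighborhood of such a point.

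The principal obstacle is the quantitative comparison between the modulars $A_n,B_n$ and the Luxemburg limits, since Lemma \ref{chambolle} supplies only qualitative convergence of $\lambda_{g_n}^\pm(\varphi)(x_n)$. The required decay and blow-up rates for $A_n,B_n$ must be extracted by splitting the integrals into a region close to $y=x_n$, where the bound $|D^s\varphi(x_n,y)|\le L|x_n-y|^{1-s}$ and the integrability threshold $p_n^->1/(1-s)$ are essential, and a region far from $x_n$, where the boundedness of $\varphi$ controls $D^s\varphi$; combined with \eqref{cotag1}--\eqref{cotag2} and the Lieberman bound $p_n^+\le\beta p_n^-$, these produce the needed scaling. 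This careful book-keeping, carried out much as in the proof of Lemma \ref{chambolle}, is the technical heart of the passage to the limit.
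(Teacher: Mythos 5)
Your proof is correct, and the overall architecture coincides with the paper's: touch $u_\infty$ with a $C^1_c$ test function $\varphi$, transfer the touching point to $x_n\to x_0$ where a translate of $\varphi$ touches $u_n$ (uniform convergence plus Remark~\ref{rem.cte}), invoke the viscosity inequality for $u_n$ supplied by Lemma~\ref{debilesvisco}, decompose $(-\Delta_{g_n})^s\varphi(x_n)$ into the contributions $A_n$, $B_n$ over $S_\varphi^{\pm}(x_n)$, and pass to the limit using Lemma~\ref{chambolle}. Where you diverge is in the mechanism exploited after the pointwise inequality at $x_n$ is in hand. On $\{f>0\}$ the paper extracts a direct lower bound $\lambda^+_{g_n}(\phi)(x_n)\geq\min\{f(x_n)^{1/p_n^+},f(x_n)^{1/p_n^-}\}$ and feeds it into Lemma~\ref{chambolle}; you repackage this as a dichotomy on $\ell=\lim\lambda^+_{g_n}(\varphi)(x_n)$ ($\ell<1\Rightarrow A_n\to 0$, $\ell>1\Rightarrow A_n\to\infty$), derived from~\eqref{estimareg} and the Luxemburg normalization, and close by contradiction since $A_n\geq f(x_n)/2\not\to 0$ --- essentially a contrapositive of the same estimate. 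The more substantive difference is on $(\Omega\setminus\supp f)^\circ$: the paper passes from $(-\Delta_{g_n})^s\phi(x_n)\geq 0$ to $\lambda^+_{g_n}(\phi)(x_n)\geq\lambda^+_{g_n}(-\phi)(x_n)$ via the asserted scaling monotonicity $(-\Delta_{g_n})^s\phi_\lambda(x_n)\geq 0$ for all $\lambda>0$, a step that is delicate for a general Orlicz $g_n$ because~\eqref{estimareg} controls rescalings of $g_n$ only up to the gap between $p_n^-$ and $p_n^+$; your argument instead compares $A_n$ and $B_n$ directly, getting $A_n\geq c\,g_n(r')$ by the lower-bound step of Lemma~\ref{chambolle} and $B_n=o(g_n(r'))$ by a near/far splitting together with~\eqref{estimareg}, which contradicts $A_n\leq B_n$. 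This sidesteps the scaling comparison entirely, at the cost of re-running the quantitative splitting of Lemma~\ref{chambolle} for $B_n$; in exchange, it is self-contained and more robust. So while both routes rest on the same three pillars (\eqref{estimareg}, Lemma~\ref{chambolle}, Lemma~\ref{debilesvisco}) and the same case decomposition by sign of $f$, your handling of the $f\equiv 0$ region is a genuinely different, and arguably cleaner, argument.
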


\begin{proof} We determine the different equations in viscosity sense depending on the sign of the function $f$.

\medskip

\noindent {\bf 1.} $\{f>0\}$. The general estimates  \eqref{genest} imply in particular that  $u_\infty$ is a viscosity subsolution to $\mathcal L^+_s u_\infty= 1$.

 Let us see that in fact $u_\infty$ is a viscosity supersolution.  Take a test function, $\phi$ touching $u_\infty$ from below at a point $x_0\in \{f>0\}$. By Remark \ref{rem.cte} there is no loss of generality to admit   that the function $u_\infty-\phi$ is strictly positive (adding some constant to the test function) and it  attains  a strict minimum at $x_0$. The uniform convergence shown in Proposition \ref{cota} and the boundedness of $\Omega$ allow us to extract, up to subsequences,  $x_n\to x_0$, where $x_n$ are  points at which the positive function $u_{n}-\phi$ reaches a minimum. Moreover, for $n$ large  we know that indeed  $u_{n}$ are viscosity supersolutions by Lemma \ref{debilesvisco}. Consequently, by the oddity of $g_n$ we can write
 \begin{align*}
 (-\Delta_{g_n})^s \phi_n(x_n) = &\int _{S_\phi^+(x_n)}g_n(D^s\phi(x_n,y))\frac{dy}{|x_n-y|^{N+s}}\\
 & + \int _{S_{-\phi}^+(x_n)}g_n(D^s\phi(x_n,y))\frac{dy}{|x_n-y|^{N+s}}\\
 \geq& f(x_n)>0,
 \end{align*}
 where the sets $S_\phi^+(x)$ are specified in \eqref{S+-}.
\begin{align*}
(-\Delta_{g_n})^s \phi_n(x_n) = &\int _{S_\phi^+(x_n)}g_n(D^s\phi(x_n,y))\frac{dy}{|x_n-y|^{N+s}}\\
& + \int _{S_{-\phi}^+(x_n)}g_n(D^s\phi(x_n,y))\frac{dy}{|x_n-y|^{N+s}}\\
\geq& f(x_n)>0,
\end{align*}
 Since the second integral is nonpositive, this means that in fact
$$
\int _{S_\phi^+(x_n)}g_n(D^s\phi(x_n,y))\frac{dy}{|x_n-y|^{N+s}} \geq f(x_n).
$$
But then, from the definition of the operator $\lambda_g^+(\phi)$ given in \eqref{lambda+-},
$$
\lambda_{g_n}^+(\phi)(x_n)\geq\min\{f(x_n)^\frac{1}{p_n^+},f(x_n)^\frac{1}{p_n^-}\},
$$
and passing to the limit using the convergence result in Lemma~\ref{chambolle} and the continuity of $f$, it yields $\LL_s^+ \phi(x_0)\geq 1$, which implies that $u_\infty$ is a viscosity supersolution to $\mathcal L^+_s u_\infty= 1$.

\medskip

\noindent {\bf 2.} $\{f<0\}$.  This case is completely analogous to the previous one and is left to the reader.

\medskip

\noindent{\bf 3.} $\{\Omega\setminus \supp f\}^\circ$. In this case we just show that $u_\infty$ is a viscosity supersolution to $\LL_{s}u_\infty=0$, since  the proof  that $u_\infty$ is a viscosity subsolution  runs similarly.

 Choose again $\{x_n\}_{n\in\N}\subset\Omega$ such that $x_n\to x_0$, being minima for  $u_n-\phi>0$ and  such that $u_\infty-\phi>0$ attains a strict minimum at $x_0$. Notice that for $n$ sufficiently large $f(x_n)=0$, given that we are considering the interior set. Therefore, $(-\Delta_{g_n})^s\phi(x_n)\geq 0$. Moreover, using \eqref{estimareg}, one can easily conclude that, denoting $\phi_\lambda(x) = \lambda^{-1}\phi(x)$,
$$
 (-\Delta_{g_n})^s\phi_\lambda(x_n)\geq 0, \quad \text{for all } \lambda>0.
$$
Equivalently
 $$
\int _{S_\phi^+(x_n)}\! g_n\left(\frac{D^s\phi(x_n,y)}{\lambda}\right)\frac{dy}{|x_n-y|^{N+s}} + \int _{S_{-\phi}^+(x_n)} \! g_n\left(\frac{D^s\phi(x_n,y)}{\lambda}\right)\frac{dy}{|x_n-y|^{N+s}}\ge 0.
$$
Equivalently
 $$
\int _{S_\phi^+(x_n)}\! g_n\left(\frac{D^s\phi(x_n,y)}{\lambda}\right)\frac{dy}{|x_n-y|^{N+s}} + \int _{S_{-\phi}^+(x_n)} \! g_n\left(\frac{D^s\phi(x_n,y)}{\lambda}\right)\frac{dy}{|x_n-y|^{N+s}}\ge 0.
$$
But this implies that
$$
\int _{S_\phi^+(x_n)}\!\! g_n\left(\frac{D^s\phi(x_n,y)}{\lambda}\right)\frac{dy}{|x_n-y|^{N+s}}\ge \int _{S_{-\phi}^+(x_n)}\!\! g_n\left(\frac{D^s(-\phi)(x_n,y)}{\lambda}\right)\frac{dy}{|x_n-y|^{N+s}}
$$
from where it follows that
$$
\lambda_{g_n}^+(\phi)(x_n)\ge \lambda_{g_n}^+(-\phi)(x_n) = -\lambda_{g_n}^-(\phi)(x_n).
$$

Thanks to Lemma \ref{chambolle}, in the limit it gives
$$
\LL_s \phi(x_0) = \LL_s^+ \phi(x_0) + \LL_s^-\phi(x_0) \ge 0.
$$

\medskip

\noindent{\bf 4.} $\Omega \cap\partial\{ f>0\}\setminus \partial\{f<0\}$. Recall that $x_0\in \Omega \cap\partial\{ f>0\}\setminus \partial\{f<0\}$ implies that $f(x_0)=0$ and $x_0$ is approached by points $x_n$ such that $f(x_n)\ge 0$. Similar arguments as in the previous step infer that $\LL_s u_\infty\ge 0$ in viscosity sense.

In this case, we just know for subsolutions that $\LL_s^+ \varphi(x_0)\leq 1$, invoking the general estimates \eqref{genest}.

\medskip

 \noindent{\bf 5.} $\Omega \cap\partial\{
f<0\}\setminus \partial\{f>0\}$. This case is identical to the previous one.

\medskip

The proof is complete.
\end{proof}

\section{Identification of the limit. A $\Gamma-$convergence result.}

This work concludes with a result proving a property verified by the limit $u_\infty$, which will  be useful to identify it in certain cases.

Let us first recall the definition of the concept of $\Gamma$-convergence (introduced in \cite{DG, DGF}) in metric spaces. The reader
is referred to \cite{DalM} and \cite{braides} for a comprehensive introduction to the topic.
\begin{defn}\label{def}
Let $X$ be a metric space. A sequence $\{F_n\}_{n\in\N}$ of functionals $F_n\colon X\to \bar\R:=\mathbb{R}\cup \{+\infty\}$ is said to $\Gamma(X)$-converge to $F\colon X\to \bar\R$, and we write $\Gamma(X)-\lim\limits_{n\to \infty}F_n=F$, if the following hold:
\begin{description}
  \item[(i)] for every $u\in X$ and $\{u_n\}_{n\in\N}\subset X$ such that $u_n\to u$ in $X$, we have
  $$F(u)\leq \liminf_{n\to \infty}F_n(u_n)\,;$$
  \item[(ii)] for every $u\in X$ there exists a recovery sequence $\{u_n\}_{n\in\N}\subset X$ such that $u_n\to u$ in $X$ and
  $$F(u)\geq \limsup_{n\to \infty}F_n(u_n)\,.$$
\end{description}
\end{defn}
For each integer $n\geq1$ consider the functionals $I_n\colon L^1(\Omega)\to [0,\infty]$ defined by
$$
I_n(u) = \begin{cases}
\D\iint_{\R^N\times \R^N} G_n(D^s u)\, d\mu & \text{if } u \in W_0^{s, G_n}(\Omega) ,\\
+\infty & \text{otherwise}.
\end{cases}
$$

The next theorem reveals which is the $\Gamma$-limit for the sequence $\{I_n\}_{n\in\N}$.
\begin{thm}\label{t2}
Define $I_\infty\colon L^1(\Omega)\to [0, \infty]$ by
$$
I_\infty(u) = \begin{cases}
0  & \text{if } u\in Y,\\
+\infty & \text{otherwise},
\end{cases}
$$
where $Y$ is specified in \eqref{Y}. Then $\Gamma(L^1(\Omega))-\lim\limits_{n\to \infty}I_n=I_\infty$.
\end{thm}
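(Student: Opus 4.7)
The plan is to verify conditions (i) and (ii) of Definition \ref{def} separately; the liminf essentially recycles the arguments from Section 4, while the recovery sequence requires an explicit construction that is the core of the proof.

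For the liminf inequality, fix $u_n \to u$ in $L^1(\Omega)$ and assume $\liminf_n I_n(u_n) =: L < \infty$ (otherwise the inequality holds trivially); along a subsequence $\Phi_{s,G_n}(u_n) \le M$. The standard comparison between the Luxemburg seminorm and its modular, combined with $p_n^\pm \to \infty$ from \eqref{cond.beta}, yields $\limsup_n [u_n]_{s,G_n} \le 1$, which is precisely the hypothesis needed to reproduce the reasoning of Proposition \ref{cota} almost verbatim: for $0 < t < s$ and $q$ with $tq > N$, Proposition \ref{prop.reg} (applicable once $p_n^- > q$) together with the embedding $W^{t,q}_0(\Omega) \hookrightarrow C^{0,\alpha}(\R^N)$ and Arzela--Ascoli identify $u$ as a uniform limit; then Fatou, $q \to \infty$, $t \to s$, and Remark \ref{remark.clave} yield $\|D^s u\|_\infty \le 1$. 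Since each $u_n$ vanishes outside $\Omega$ so does $u$, hence $u \in Y$ and $I_\infty(u) = 0 \le L$.

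For the recovery sequence, the only nontrivial case is $u \in Y$. The plan is to first approximate $u$ in $L^1$ by functions $v_k \in C_c^\infty(\Omega)$ with $[v_k]_{C^{0,s}} \le c_k < 1$, and then show $I_n(v_k) \to 0$ as $n \to \infty$ for each fixed $k$, concluding with a diagonal extraction. The approximation can be constructed by first contracting $u$ slightly towards an interior point of $\Omega$ via $\tilde u_k(x) := (1-1/k) u(x/(1-1/k))$, which (for a star-shaped $\Omega$, the general Lipschitz case following by a partition of unity) has support inside a compact subset of $\Omega$ and satisfies $[\tilde u_k]_{C^{0,s}} \le (1-1/k)^{1-s} < 1$; then mollifying $\tilde u_k$ at a scale smaller than $\dist(\supp \tilde u_k, \partial\Omega)$. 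For fixed $k$, the modular $\iint G_n(D^s v_k)\, d\mu$ is split into three regions: a near-diagonal piece $|x-y| < \delta_k$ with $\delta_k := (c_k/\|\nabla v_k\|_\infty)^{1/(1-s)}$, a middle annulus $\delta_k \le |x-y| < R_k$, and a far piece $|x-y| \ge R_k$ with $R_k$ chosen so that $2\|v_k\|_\infty R_k^{-s} < 1$. On the near piece, the $C^1$ estimate $|D^s v_k| \le \|\nabla v_k\|_\infty |x-y|^{1-s} \le c_k$ combined with \eqref{cotaG2} gives a bound of the form $C\, c_k^{p_n^-}$; on the middle annulus, $|D^s v_k| \le c_k$ and \eqref{cotaG2} produce $C\, c_k^{p_n^-}$ against a set of finite $\mu$-measure; on the far piece the decay $|D^s v_k| \le 2\|v_k\|_\infty |x-y|^{-s}$ with \eqref{cotaG2} produces $C\,(2\|v_k\|_\infty R_k^{-s})^{p_n^-}$. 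All three vanish because $p_n^- \to \infty$. An Attouch-type diagonal lemma then produces $k(n) \to \infty$ with $u_n := v_{k(n)} \to u$ in $L^1$ and $I_n(u_n) \to 0 = I_\infty(u)$.

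The liminf step mostly repackages Proposition \ref{cota}, so the real difficulty is the recovery sequence: one must simultaneously force the support of the approximants into a compact subset of $\Omega$, bring the $C^{0,s}$ seminorm strictly below $1$, and control the global modular on $\R^N \times \R^N$ (in particular its long-range tail, which is where the finiteness of the $\mu$-measure of the effective support fails). The key mechanism is that the combined dilation-and-scaling $u \mapsto (1-1/k)\, u(\cdot/(1-1/k))$ contracts the Hölder seminorm by the strictly sub-unit factor $(1-1/k)^{1-s}$; this strictness is exactly what allows the factors $c_k^{p_n^-}$ arising in each of the three pieces to vanish as $n \to \infty$.
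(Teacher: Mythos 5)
Your proof is correct in substance, but on both halves it takes a substantially heavier route than the paper, and the comparison is instructive.

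For the liminf inequality you re-run the uniform-convergence machinery of Proposition \ref{prop.reg}, Corollary \ref{cor.inmersion}, Arzel\`a--Ascoli, $q\to\infty$, $t\to s$, and Remark \ref{remark.clave} to show the $L^1$-limit $u$ is a uniform limit with $\|D^s u\|_\infty\le 1$. This works (the modular bound does yield $\limsup[u_n]_{s,G_n}\le 1$, and the uniform limit must coincide with the $L^1$-limit), but the paper's argument is lighter: it only uses Corollary \ref{cor.inmersion} to get Rellich--Kondrachov compactness in $L^q$, hence a.e.\ convergence $D^s u_n\to D^s u$, and then proves $|D^s u|\le 1$ directly at $\mu$-Lebesgue points of $D^s u$ by combining H\"older's inequality with the pointwise bound $|t|^{p_n^-}\le 1 + G_n(t)$; no Morrey embedding, no Arzel\`a--Ascoli, no double limit $q\to\infty$, $t\to s$.

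For the recovery sequence the gap in economy is larger. You build $v_k\in C^\infty_c(\Omega)$ by a dilation-and-scaling (restricted to star-shaped domains, then patched by a partition of unity) followed by mollification, and control the modular by a three-region decomposition plus an Attouch diagonal extraction. This can be made to work, but the whole construction is unnecessary: the paper just takes $u_n:=(1-\ve_n)u$, with no need for compact support or smoothness. The key observation you are missing is that for $u\in Y$ one has $|D^s u|\le 1$ \emph{everywhere}, hence $|D^s u|^{p_n^-}\le |D^s u|$, so
$$
G_n(D^s u_n)\le |D^s u_n|^{p_n^-}\le (1-\ve_n)^{p_n^-}\,|D^s u|,
$$
and the majorant $\iint|D^s u|\,d\mu$ is a \emph{fixed} finite number: on $\Omega\times\Omega$ the Lipschitz bound $|D^s u|\le L|x-y|^{1-s}$ makes it integrable near the diagonal, and on $\Omega\times\Omega^c$ the crude tail bound $|D^s u|\le 2\|u\|_\infty|x-y|^{-s}$ does the rest. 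Choosing $\ve_n\to0$ with $(1-\ve_n)^{p_n^-}\to0$ then kills the modular and gives $u_n\to u$ in $L^1$ simultaneously, with no diagonalization. Your decomposition into near/middle/far regions is essentially a re-derivation of the finiteness of $\iint|D^s u|\,d\mu$ scattered over three cases, plus extra work (compactification of the support, mollification, partition of unity) that the bound $|D^s u|^{p_n^-}\le|D^s u|$ makes redundant. The one thing your construction buys is that the recovery sequence lives in $C^\infty_c(\Omega)$, which might be useful in other contexts, but is not needed for the $\Gamma$-limit statement here.
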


\begin{proof}
We start by verifying the existence of a recovery sequence. If
$I_\infty(u)=\infty,$ the inequality clearly holds for any sequence $u_n
\to u$ strongly in $L^1(\Omega)$. On the other hand, if
$I_\infty(u)< +\infty$ we must have $I_\infty(u)=0$ and,
as a result, $u\in Y$. For each $n\in \mathbb{N},$ let $u_n := (1-\ve_n)u$, where $\ve_n\to 0$ is to be determined. The fact that $u\in Y$ entails that
$$
\limsup_{n\to\infty} \iint_{\R^N\times\R^N} G_n(D^s u_n)\, d\mu = 0.
$$

Indeed, since $u\in Y$ we have that $|D^s u(x,y)|\le 1$ and hence we estimate the integrand as follows:
$$
G_n(D^su_n(x,y))\le |D^s u_n(x,y)|^{p_n^-} \le (1-\ve_n)^{p_n^-} \frac{|u(x)-u(y)|}{|x-y|^s}.
$$
Thus, as long as $x,y\in\Omega$, using that $u$ is Lipschitz, we get
\begin{equation}\label{cotaGn1}
G_n(D^s u_n(x,y))\le(1-\ve_n)^{p_n^-} L |x-y|^{1-s},
\end{equation}
where $L$ is the Lipschitz constant of $u$, while if $x\in\Omega$, $y\not\in \Omega$, we find
\begin{equation}\label{cotaGn2}
G_n(D^su_n(x,y))\le(1-\ve_n)^{p_n^-} 2\|u\|_{\infty} |x-y|^{-s}
\end{equation}

Then, we decompose the integral as follows
$$
\iint_{\R^N\times\R^N} G_n(D^s u_n) \, d\mu = \iint_{\Omega\times\Omega} G_n(D^s u_n) \, d\mu + 2\iint_{\Omega\times\Omega^c} G_n(D^su_n) \, d\mu,
$$
where $\Omega^c = \R^N\setminus \Omega$.

For the first term, we invoke \eqref{cotaGn1} and for the second term \eqref{cotaGn2}. Hence,
$$
\iint_{\R^N\times\R^N} G_n(D^su_n) \, d\mu \le C(1-\ve_n)^{p_n^-}.
$$
Therefore, choosing $\ve_n>0$ such that $(1-\ve_n)^{p_n^-}\to 0$ as $n\to\infty$, the result follows.
\

To prove liminf-inequality from the definition of the $\Gamma$-convergence, there is not  loss
of generality in assuming that  $\{u_n\}_{n\in\N} \subset  W_0^{s,G_n}(\Omega)$ and
\begin{equation}\label{bound}
\liminf_{n\to \infty} I_n(u_n)=\lim_{n\to\infty} I_n(u_n)<\infty.
\end{equation}

We note that if $x_0\neq y_0$ and $0<R<|x_0-y_0|$, then $\mu(B_R(x_0,y_0))<\infty$. Moreover, if $u\in L^1(\R^N)$ then $D^s u\in L^1(B_R(x_0,y_0), d\mu)$.

Let $(x_0,y_0)\in\R^N\times \R^N$ with $x_0\neq y_0$ be a Lebesgue point for $D^su$ according to the measure $\mu$, namely
$$\lim_{r\to 0^+}\frac{1}{|B_r(x_0,y_0)|}\iint_{B_r(x_0,y_0)}|D^s u(x,y)-D^s u(x_0,y_0)|\;d\mu=0\,.$$

We fix this point $(x_0,y_0)$ and denote $B_r = B_r(x_0,y_0)\subset \R^N\times \R^N$.

On the other hand,
\begin{equation}\label{cota-D}
\iint_{B_r} |D^s u_n|\, d\mu\leq \|D^su_n\|_{L^{p_n^-}(B_r)}\mu(B_r)^\frac{p_n^--1}{p_n^-}.
\end{equation}
Furthermore,
\begin{align*}
\iint_{B_r}|D^s u_n|^{p_n^-}\,d\mu&=\iint_{B_{r,n}^-}|D^s u_n|^{p_n^-}\,d\mu+\iint_{B_{r,n}^+}|D^s u_n|^{p_n^-}\,d\mu\\
&\leq \mu(B_r)+\iint_{B_{r,n}^+} G_n(D^su_n)\,d\mu,
\end{align*}
where the sets are defined as
\begin{align*}
&B_{r,n}^-:=\{(x,y)\in B_r\colon |D^s u_n(x,y)|<1\}\text{ and }\\& B_{r,n}^+:=\{(x,y)\in B_r\colon |D^s u_n(x,y)|\geq1\}.
\end{align*}
Plugging this estimate on \eqref{cota-D} leads to
$$
\iint_{B_r}|D^s u_n|\,d\mu\leq\left(\mu(B_r)+\iint_{B_{r,n}^+} G_n(D^su_n)\,d\mu\right)^\frac1{p_n^-}
\mu(B_r)^\frac{p_n^--1}{p_n^-}
$$
The nonrestrictive assumption $\sup_{n\in\N}I_n(u_n)<\infty$ implies that
$$
\sup_{n\in\N} \iint_{B_{r,n}^+}G_n(D^su_n)\,d\mu<\infty,
$$
hence we can pass to the limit in the above estimate and deduce
\begin{equation}\label{cota-sup}
\limsup_{n\to\infty}\iint_{B_r}|D^s u_n|\,d\mu\leq \mu(B_r).
\end{equation}

Now we take some $q<p_n^-$, $0<t<s$ and observe that, from Corollary \ref{cor.inmersion}, the sequence $\{u_n\}_{n\in\N}$ is bounded in $W^{t,q}_0(\Omega)$. Therefore, from Rellich-Kondrachov Theorem, we can assume that $u_n\to u$ strongly in $L^q(\Omega)$ and a.e. As a result, $D^s u_n\to D^s u$ a.e. in $\R^N\times \R^N$.

Thus, by Fatou's Lemma,
$$
\iint_{B_r}|D^s u|\,d\mu\leq\liminf_{n\to\infty}\iint_{B_r}|D^s u_n|\,d\mu.
$$
Taking into account \eqref{cota-sup} we get
$$
\iint_{B_r}|D^s u|\,d\mu\leq \mu(B_r),
$$
or equivalently,
$$
\frac1{\mu(B_r)}\iint_{B_r}|D^s u|\,d\mu\leq 1.
$$
Recalling that $B_r = B_r(x_0,y_0)$ where $(x_0,y_0)$ with $x_0\neq y_0$ was a Lebesgue point for $D^s u$, this shows that indeed $|D^su(x_0,y_0)|\leq 1$. Then $u\in Y$ and hence
$$
I_\infty (u)=0\leq \liminf_{n\to\infty}I_n(u_n),
$$
and the proof is complete.
\end{proof}

A simple consequence of Theorem \ref{t2} is the following.
\begin{cor}\label{c1}
Let $f\in L^r(\Omega)$ for some $r>1$. For each integer $n\geq1$ consider the functionals $F_n\colon L^1(\Omega)\to \mathbb{\overline{R}}$ defined by
\begin{equation}\nonumber
F_n(u):= \begin{cases}
\D\iint_{\R^N\times\R^N} G_n(D^s u)\, d\mu - \int_\Omega fu\, dx , & \text{if }\; u \in W_0^{s, G_n}(\Omega) ,\\
+\infty, & \text{otherwise}
\end{cases}
\end{equation}
Denote as $F_\infty\colon L^1(\Omega)\to \bar\R$ the functional
$$
F_\infty(u) := \begin{cases}
\D -\int_\Omega fu\, dx & \text{if } u\in Y,\\
+\infty & \text{otherwise},
\end{cases}
$$
where $Y$ is specified in \eqref{Y}. Then $\Gamma(L^1(\Omega))-\lim\limits_{n\to \infty} F_n= F_\infty$.
\end{cor}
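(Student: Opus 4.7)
The strategy is to show that the corollary follows from Theorem \ref{t2} by treating the linear term $H(u):=-\int_\Omega fu\,dx$ as a perturbation that is continuous along the sequences that matter, even though it need not be continuous on all of $L^1(\Omega)$.

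For the $\liminf$-inequality, I would fix $u_n\to u$ in $L^1(\Omega)$ and assume without loss that $\liminf_{n\to\infty} F_n(u_n)$ is finite (otherwise there is nothing to prove). Passing to a subsequence (still denoted $u_n$) I may suppose $u_n\in W^{s,G_n}_0(\Omega)$ and $F_n(u_n)$ is bounded. The plan is then to show that $[u_n]_{s,G_n}$ is bounded independently of $n$: applying the H\"older inequality \eqref{HolderG} together with Corollary \ref{poincare.norma} gives $|\int_\Omega fu_n\,dx|\le C\|f\|_{G_n^*}[u_n]_{s,G_n}$, while Lemma \ref{lema.inclusion} applied to the complementary function $G_n^*$ (whose lower index $(p_n^-)'\to 1$) shows $\|f\|_{G_n^*}$ stays bounded since $f\in L^r(\Omega)$ with $r>1$. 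Combining this with the lower bound $I_n(u_n)\ge [u_n]_{s,G_n}^{p_n^-}$ valid once $[u_n]_{s,G_n}\ge 1$, and using $p_n^-\to\infty$, the assumed boundedness of $F_n(u_n)=I_n(u_n)-\int fu_n$ forces $[u_n]_{s,G_n}$ to stay bounded.

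Once this uniform bound is in hand, I would repeat the argument from Proposition \ref{cota}: Corollary \ref{cor.inmersion} places $\{u_n\}$ into a single space $W^{t,q}_0(\Omega)$ for some $t<s$ and $q$ with $tq>N$, and Arzela-Ascoli extracts a subsequence converging uniformly on $\R^N$ to a continuous function, which must coincide with $u$ by the given $L^1$ convergence. In particular $\int_\Omega fu_n\,dx\to \int_\Omega fu\,dx$. Applying Theorem \ref{t2} to the sequence $u_n$ yields $I_\infty(u)\le \liminf_{n\to\infty}I_n(u_n)$, and subtracting the limit of the linear term gives $F_\infty(u)\le \liminf_{n\to\infty} F_n(u_n)$, as required.

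For the $\limsup$-inequality, if $u\notin Y$ the bound is trivial. If $u\in Y$, I would take the recovery sequence already produced in Theorem \ref{t2}, namely $u_n=(1-\varepsilon_n)u$ with $\varepsilon_n\to 0$ chosen so that $I_n(u_n)\to 0$. Since $u\in Y\subset C^{0,s}(\R^N)\cap L^\infty(\R^N)$, one has $\|u_n-u\|_\infty\le \varepsilon_n\|u\|_\infty\to 0$, hence $\int_\Omega fu_n\,dx\to \int_\Omega fu\,dx$ by $L^1$-$L^\infty$ duality. Therefore $F_n(u_n)=I_n(u_n)-\int fu_n\to -\int fu = F_\infty(u)$, which gives the matching $\limsup$ bound.

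The only genuinely nontrivial step is the a priori bound on $[u_n]_{s,G_n}$ from the $L^1$-convergence plus boundedness of $F_n(u_n)$; the rest is a direct transcription of Theorem \ref{t2} together with standard stability of $\Gamma$-convergence under continuous perturbations, with the continuity here holding along the relevant subsequences thanks to the Orlicz-Sobolev bounds and the embedding into H\"older spaces.
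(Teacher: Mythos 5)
Your proof is correct; the paper itself offers no argument beyond the phrase ``a simple consequence of Theorem~\ref{t2},'' so you are supplying the missing details. Your key observation is exactly the right one: since $f$ is only in $L^r(\Omega)$ with $r>1$, the perturbation $u\mapsto -\int_\Omega fu\,dx$ is \emph{not} continuous on all of $L^1(\Omega)$, so one cannot simply invoke the standard stability of $\Gamma$-limits under continuous perturbations. The a~priori bound $[u_n]_{s,G_n}\le C$ you derive from the finiteness of $\liminf F_n(u_n)$ (via $\Phi_{s,G_n}(u_n)\ge [u_n]_{s,G_n}^{p_n^-}$ for $[u_n]_{s,G_n}\ge 1$, the uniform-in-$n$ H\"older/Poincar\'e bound on the linear term, and $p_n^-\to\infty$) is precisely what restores compactness in $C(\overline\Omega)$ through Corollary~\ref{cor.inmersion} and Arzel\`a--Ascoli, which in turn makes the linear term continuous along the relevant subsequence. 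The $\limsup$ side correctly reuses the recovery sequence $(1-\ve_n)u$ from Theorem~\ref{t2}, together with the fact that every $u\in Y$ is bounded. One small imprecision of no consequence: in $G_n^*$ the H\"older conjugate $(p_n^-)'$ plays the role of the \emph{upper} growth index (cf.~\eqref{cond*}), not the lower one, but since both $(p_n^\pm)'\to 1$ the application of Lemma~\ref{lema.inclusion} goes through exactly as you say.
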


We rewrite the previous consequence as the following maximization problem.

\begin{cor}\label{lem.pbmax} Let $f\in L^r(\Omega)$ with $r>1$,  and let $u_\infty$ be a function given by Proposition \ref{cota}. Then, $u_\infty$ satisfies
\begin{equation}\label{max}
\max_{\psi\in Y}\int_{\R^N}f\psi dx=\int_{\R^N}fu_\infty dx.
\end{equation}

\end{cor}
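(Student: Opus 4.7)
The plan is to deduce the maximization property directly from the $\Gamma$-convergence result in Corollary~\ref{c1}, together with the fact that each $u_n$ is the minimizer of the energy functional $F_n$.

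First I would verify that, for each $n$, the weak solution $u_n \in W^{s,G_n}_0(\Omega)$ of \eqref{ec.gn} is the (unique) minimizer of $F_n$ over $L^1(\Omega)$. This is the standard variational characterization: the operator $(-\Delta_{g_n})^s$ is the Gateaux derivative of the strictly convex functional $u \mapsto \iint_{\R^N\times\R^N} G_n(D^s u)\, d\mu$ on $W^{s,G_n}_0(\Omega)$ (which follows from \eqref{partes} together with convexity of $G_n$), so that \eqref{debil} is exactly the Euler--Lagrange equation of $F_n$.

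Next I would invoke Proposition~\ref{cota}, along a subsequence (still denoted $u_n$), which yields $u_n \to u_\infty$ uniformly in $\R^N$ and $u_\infty \in Y$. Uniform convergence on the bounded set $\Omega$ gives strong convergence in $L^1(\Omega)$, which is the topology in which $\Gamma$-convergence was established. Now apply the fundamental theorem of $\Gamma$-convergence (see \cite{DalM}): since $F_n \stackrel{\Gamma(L^1(\Omega))}{\longrightarrow} F_\infty$, $u_n$ minimizes $F_n$ and $u_n\to u_\infty$ in $L^1(\Omega)$, it follows that $u_\infty$ minimizes $F_\infty$ over $L^1(\Omega)$.

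Finally I would translate the minimization statement into the desired maximization. Since
$$
F_\infty(u)=\begin{cases}-\int_\Omega fu\,dx & \text{if }u\in Y,\\ +\infty & \text{otherwise,}\end{cases}
$$
and $u_\infty\in Y$ (where the integral is finite thanks to $Y\subset L^\infty(\Omega)$ and $f\in L^r(\Omega)$, $r>1$), the condition $F_\infty(u_\infty)\le F_\infty(\psi)$ for every $\psi\in L^1(\Omega)$ reduces to
$$
-\int_\Omega fu_\infty\,dx \le -\int_\Omega f\psi\,dx \qquad \text{for every }\psi\in Y,
$$
which is precisely \eqref{max} (recalling that $\psi=u_\infty=0$ outside $\Omega$, so the integrals over $\R^N$ and over $\Omega$ coincide). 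The main subtle point I would need to be careful about is the step where convergence of minimizers is used: one should check that the sequence $\{u_n\}$ is equicoercive in $L^1(\Omega)$ so that the $\Gamma$-convergence machinery indeed forces $u_\infty$ to be a global minimizer of $F_\infty$; but Proposition~\ref{cota} provides even uniform bounds, which gives this equicoercivity for free.
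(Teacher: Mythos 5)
Your argument is correct and is precisely the route the paper intends: the corollary is presented as a direct consequence of Corollary~\ref{c1} via the fundamental theorem of $\Gamma$-convergence, with the convergence of minimizers supplied by Proposition~\ref{cota}. Your reminder about equicoercivity is slightly more than is needed (since the convergent subsequence is already in hand, the liminf and recovery-sequence inequalities alone force $u_\infty$ to minimize $F_\infty$), but the rest — identifying $u_n$ as the minimizer of $F_n$ via the Euler--Lagrange characterization and then translating minimization of $F_\infty$ on $Y$ into the maximization \eqref{max} — matches the paper's implicit proof.
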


When $f(x)>0$ (resp. $f(x)<0$) for all $x\in\Omega$, we can identify the limit function $u_\infty$ in terms of the distance to the boundary $\partial\Omega$, thanks to the fact that $u_\infty$ solves the maximization problem \eqref{max}.  This also occurs for the local $p$-Laplacian and some related operators of variable exponent type, see \cite{BDM,PLR, PL} and references therein. Namely,
\begin{cor}
If $f(x)>0$ in $\Omega$, then there exists a unique maximizer of problem \eqref{max} given by
$$
u_\infty(x) = \begin{cases}
(d(x,\partial\Omega))^s & \text{if } x\in\Omega\\
0 & \text{if } x\in \R^N\setminus\Omega.
\end{cases}
$$
\end{cor}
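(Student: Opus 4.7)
The plan is to verify that the candidate $v(x):=\dist(x,\partial\Omega)^s\chi_\Omega(x)$ lies in $Y$ and that it pointwise dominates every competitor, so that the strict positivity of $f$ forces it to be the unique maximizer in \eqref{max}. Since Corollary \ref{lem.pbmax} already guarantees that $u_\infty$ is a maximizer, identifying this unique maximizer identifies $u_\infty$.

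First I would show that $v\in Y$. By the definition of $Y$ in \eqref{Y}, I need $v\in C^{0,s}(\R^N)$ with $v=0$ on $\R^N\setminus\Omega$ and $s$-Hölder seminorm at most one; the vanishing condition is built into the definition of $v$. For the Hölder bound I combine two classical facts: the distance function $x\mapsto\dist(x,\partial\Omega)$ is $1$-Lipschitz on all of $\R^N$ (taking the value $0$ outside $\Omega$ after restriction), and the concave map $t\mapsto t^s$ on $[0,\infty)$ satisfies $|a^s-b^s|\le|a-b|^s$ for $0<s<1$. Composing these yields
\[
|v(x)-v(y)|\le |\dist(x,\partial\Omega)-\dist(y,\partial\Omega)|^s \le |x-y|^s
\]
for $x,y\in\overline\Omega$. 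The mixed case $x\in\Omega$, $y\in\R^N\setminus\Omega$ reduces to the previous one because the segment from $x$ to $y$ crosses $\partial\Omega$ at some point $z$, so $\dist(x,\partial\Omega)\le|x-z|\le|x-y|$, hence $|v(x)-v(y)|=\dist(x,\partial\Omega)^s\le|x-y|^s$; the case of both points outside $\Omega$ is trivial.

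Next I would prove the pointwise upper bound $|\psi(x)|\le v(x)$ for every $\psi\in Y$ and every $x\in\R^N$. For $x\in\Omega$ the boundary $\partial\Omega$ is compact, so there exists $y_0\in\partial\Omega$ with $|x-y_0|=\dist(x,\partial\Omega)$. Since $\psi(y_0)=0$ and $[\psi]_{C^{0,s}}\le 1$,
\[
\psi(x)=\psi(x)-\psi(y_0)\le |x-y_0|^s = v(x).
\]
Because $-\psi$ also belongs to $Y$, the same argument gives $-\psi(x)\le v(x)$; hence $|\psi(x)|\le v(x)$. For $x\notin\Omega$ both sides vanish. Using $f>0$ a.e.\ in $\Omega$ and $\psi=v=0$ on $\R^N\setminus\Omega$, we conclude
\[
\int_{\R^N} f\psi\,dx \le \int_{\R^N} f\,|\psi|\,dx \le \int_{\R^N} f\,v\,dx,
\]
so $v$ realizes the maximum in \eqref{max}.

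Finally, for uniqueness, suppose $\psi\in Y$ also attains the maximum. Then $\int_\Omega f\,(v-\psi)\,dx=0$, and since $v-\psi\ge 0$ on $\Omega$ (by the pointwise bound) and $f>0$ on $\Omega$, this forces $v=\psi$ almost everywhere in $\Omega$, and thus everywhere in $\R^N$ by continuity of both functions. Consequently $u_\infty=v$, finishing the proof. The main delicate point is the composition argument for $v\in Y$ across the boundary, but once the $1$-Lipschitz character of the distance function is invoked this becomes routine; everything else is a direct consequence of the defining properties of $Y$ and the sign of $f$.
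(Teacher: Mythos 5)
Your proof is correct and follows essentially the same route as the paper's: verify that $\dist(\cdot,\partial\Omega)^s$ lies in $Y$ via the $1$-Lipschitz property of the distance function composed with the subadditivity of $t\mapsto t^s$, then show every $\psi\in Y$ is pointwise dominated in modulus by this function (using $\psi=0$ on $\partial\Omega\subset\R^N\setminus\Omega$ and $[\psi]_{C^{0,s}}\le 1$), and conclude via the strict sign of $f$. You fill in a few details the paper leaves implicit (the mixed-case Hölder estimate across $\partial\Omega$ and the explicit uniqueness argument), but the core decomposition is identical.
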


\begin{proof}
The proof can be found in \cite[Lemma 6.6]{FPL}. We include the details for completeness.

First, observe that $(d(x,\partial \Omega))^s$ is an admissible function in \eqref{max} for any $s\in(0,1)$. Recall that $d(x,\partial \Omega)$ is a Lipschitz function with unit  constant, thus
$$
|(d(x,\partial \Omega))^s-(d(y,\partial \Omega))^s| \leq |d(x,\partial \Omega) - d(y,\partial \Omega) |^s \leq |x-y|^s.
$$
Hence, the corollary  follows if for any $v\in Y$ we see that $v(x)\leq (d(x,\partial \Omega))^s$. Indeed, since $v\in Y$, it holds that $|v(x)|\leq |x-y|^s$ for all $y\in\partial \Omega$, and then
$$
|v(x)|\leq \inf_{y\in \partial\Omega} |x-y|^s=(d(x,\partial \Omega))^s,
$$
which concludes the proof.
\end{proof}

\section*{Acknowledgements}

This paper is partially supported by grants UBACyT 20020130100283BA, CONICET PIP 11220150100032CO and ANPCyT PICT 2012-0153.

All of the authors are members of CONICET.

 %%%%%%%%%%%%%%%%%%%%%%%%%%%%%
 %%%%%%%%%%%%%%%%%%%%%%%%%%%%%
 %%
 %% BIBLIOGRAFIA
 %%
 %%%%%%%%%%%%%%%%%%%%%%%%%%%%%
 %%%%%%%%%%%%%%%%%%%%%%%%%%%%%

\bibliographystyle{amsplain}
\bibliography{biblio}

\providecommand{\bysame}{\leavevmode\hbox to3em{\hrulefill}\thinspace}
\providecommand{\MR}{\relax\ifhmode\unskip\space\fi MR }
% \MRhref is called by the amsart/book/proc definition of \MR.
\providecommand{\MRhref}[2]{%
  \href{http://www.ams.org/mathscinet-getitem?mr=#1}{#2}
}
\providecommand{\href}[2]{#2}
\begin{thebibliography}{10}

\bibitem{A}
Gunnar Aronsson, \emph{Extension of functions satisfying {L}ipschitz
  conditions}, Ark. Mat. \textbf{6} (1967), 551--561 (1967). \MR{0217665}

\bibitem{ACJ}
Gunnar Aronsson, Michael~G. Crandall, and Petri Juutinen, \emph{A tour of the
  theory of absolutely minimizing functions}, Bull. Amer. Math. Soc. (N.S.)
  \textbf{41} (2004), no.~4, 439--505. \MR{2083637}

\bibitem{BDM}
T.~Bhattacharya, E.~DiBenedetto, and J.~Manfredi, \emph{Limits as
  {$p\to\infty$} of {$\Delta_pu_p=f$} and related extremal problems}, Rend.
  Sem. Mat. Univ. Politec. Torino (1989), no.~Special Issue, 15--68 (1991),
  Some topics in nonlinear PDEs (Turin, 1989). \MR{1155453}

\bibitem{BCF}
C.~Bjorland, L.~Caffarelli, and A.~Figalli, \emph{Nonlocal tug-of-war and the
  infinity fractional {L}aplacian}, Comm. Pure Appl. Math. \textbf{65} (2012),
  no.~3, 337--380. \MR{2868849}

\bibitem{BM}
Marian Bocea and Mihai Mih\u{a}ilescu, \emph{On a family of inhomogeneous
  torsional creep problems}, Proc. Amer. Math. Soc. \textbf{145} (2017),
  no.~10, 4397--4409. \MR{3690623}

\bibitem{BBM}
Jean Bourgain, Ha\"im Brezis, and Petru Mironescu, \emph{Another look at
  {S}obolev spaces}, Optimal control and partial differential equations, IOS,
  Amsterdam, 2001, pp.~439--455. \MR{3586796}

\bibitem{braides}
Andrea Braides, \emph{{$\Gamma$}-convergence for beginners}, Oxford Lecture
  Series in Mathematics and its Applications, vol.~22, Oxford University Press,
  Oxford, 2002. \MR{1968440}

\bibitem{CLM}
Antonin Chambolle, Erik Lindgren, and R\'egis Monneau, \emph{A {H}\"older
  infinity {L}aplacian}, ESAIM Control Optim. Calc. Var. \textbf{18} (2012),
  no.~3, 799--835. \MR{3041665}

\bibitem{CIL}
Michael~G. Crandall, Hitoshi Ishii, and Pierre-Louis Lions, \emph{User's guide
  to viscosity solutions of second order partial differential equations}, Bull.
  Amer. Math. Soc. (N.S.) \textbf{27} (1992), no.~1, 1--67. \MR{1118699}

\bibitem{DalM}
Gianni Dal~Maso, \emph{An introduction to {$\Gamma$}-convergence}, Progress in
  Nonlinear Differential Equations and their Applications, vol.~8, Birkh\"auser
  Boston, Inc., Boston, MA, 1993. \MR{1201152}

\bibitem{DG}
Ennio De~Giorgi, \emph{Sulla convergenza di alcune successioni d'integrali del
  tipo dell'area}, Rend. Mat. (6) \textbf{8} (1975), 277--294, Collection of
  articles dedicated to Mauro Picone on the occasion of his ninetieth birthday.
  \MR{0375037}

\bibitem{DGF}
Ennio De~Giorgi and Tullio Franzoni, \emph{Su un tipo di convergenza
  variazionale}, Atti Accad. Naz. Lincei Rend. Cl. Sci. Fis. Mat. Natur. (8)
  \textbf{58} (1975), no.~6, 842--850. \MR{0448194}

\bibitem{FBS}
J.~{Fern{\'a}ndez Bonder} and A.~M. {Salort}, \emph{{Fractional order
  Orlicz-Sobolev spaces}}, ArXiv e-prints (2017).

\bibitem{FPL}
Ra\'ul Ferreira and Mayte P\'erez-Llanos, \emph{Limit problems for a fractional
  {$p$}-{L}aplacian as {$p\to\infty$}}, NoDEA Nonlinear Differential Equations
  Appl. \textbf{23} (2016), no.~2, Art. 14, 28. \MR{3478965}

\bibitem{grisvard}
P.~Grisvard, \emph{Elliptic problems in nonsmooth domains}, Monographs and
  Studies in Mathematics, vol.~24, Pitman (Advanced Publishing Program),
  Boston, MA, 1985. \MR{775683}

\bibitem{J}
Petri Juutinen, \emph{Minimization problems for {L}ipschitz functions via
  viscosity solutions}, Ann. Acad. Sci. Fenn. Math. Diss. (1998), no.~115, 53,
  Dissertation, University of Jyv\"askul\"a, Jyv\"askul\"a, 1998. \MR{1632063}

\bibitem{JLM2}
Petri Juutinen, Peter Lindqvist, and Juan~J. Manfredi, \emph{On the equivalence
  of viscosity solutions and weak solutions for a quasi-linear equation}, SIAM
  J. Math. Anal. \textbf{33} (2001), no.~3, 699--717. \MR{1871417}

\bibitem{K}
Bernhard Kawohl, \emph{On a family of torsional creep problems}, J. Reine
  Angew. Math. \textbf{410} (1990), 1--22. \MR{1068797}

\bibitem{Krasnoselskii}
M.~A. Krasnosel'ski\u\i\ and Ja.~B. Ruticki\u\i, \emph{Convex functions and
  {O}rlicz spaces}, Translated from the first Russian edition by Leo F. Boron,
  P. Noordhoff Ltd., Groningen, 1961. \MR{0126722}

\bibitem{Kufner}
Alois Kufner, Old\v{r}ich John, and Svatopluk Fu\v{c}\'ik, \emph{Function
  spaces}, Noordhoff International Publishing, Leyden; Academia, Prague, 1977,
  Monographs and Textbooks on Mechanics of Solids and Fluids; Mechanics:
  Analysis. \MR{0482102}

\bibitem{Lieberman}
Gary~M. Lieberman, \emph{The natural generalization of the natural conditions
  of {L}adyzhenskaya and {U}ral'tseva for elliptic equations}, Comm. Partial
  Differential Equations \textbf{16} (1991), no.~2-3, 311--361. \MR{1104103}

\bibitem{LL}
Erik Lindgren and Peter Lindqvist, \emph{Fractional eigenvalues}, Calc. Var.
  Partial Differential Equations \textbf{49} (2014), no.~1-2, 795--826.
  \MR{3148135}

\bibitem{MRU}
J.~J. Manfredi, J.~D. Rossi, and J.~M. Urbano, \emph{Limits as
  {$p(x)\to\infty$} of {$p(x)$}-harmonic functions}, Nonlinear Anal.
  \textbf{72} (2010), no.~1, 309--315. \MR{2574940}

\bibitem{PSSW}
Yuval Peres, Oded Schramm, Scott Sheffield, and David~B. Wilson,
  \emph{Tug-of-war and the infinity {L}aplacian}, J. Amer. Math. Soc.
  \textbf{22} (2009), no.~1, 167--210. \MR{2449057}

\bibitem{PL}
Mayte P\'erez-Llanos, \emph{Anisotropic variable exponent
  {$(p(\cdot),q(\cdot))$}-{L}aplacian with large exponents}, Adv. Nonlinear
  Stud. \textbf{13} (2013), no.~4, 1003--1034. \MR{3115150}

\bibitem{PLR2}
Mayte P\'erez-Llanos and Julio~D. Rossi, \emph{The behaviour of the
  {$p(x)$}-{L}aplacian eigenvalue problem as {$p(x)\to\infty$}}, J. Math. Anal.
  Appl. \textbf{363} (2010), no.~2, 502--511. \MR{2564871}

\bibitem{PLR}
Mayte Perez-Llanos and Julio~D. Rossi, \emph{The limit as {$p(x)\to\infty$} of
  solutions to the inhomogeneous {D}irichlet problem of the
  {$p(x)$}-{L}aplacian}, Nonlinear Anal. \textbf{73} (2010), no.~7, 2027--2035.
  \MR{2674182}

\bibitem{S}
Denisa Stancu-Dumitru, \emph{The asymptotic behavior of a class of
  {$\varphi$}-harmonic functions in {O}rlicz-{S}obolev spaces}, J. Math. Anal.
  Appl. \textbf{463} (2018), no.~1, 365--376. \MR{3779668}

\end{thebibliography}

\end{document}